\documentclass{amsart}
\usepackage[utf8]{inputenc}

\title{Quantification of Banach-Saks properties of higher orders}
\author[Z. Silber]{Zdeněk Silber}
\email{zdesil@seznam.cz}
\keywords{Banach-Saks set, Weak Banach-Saks set, Quantitative version, $\ell_1^\xi$-spreading model, Schreier families}
\address{Department of Mathematical Analysis, Faculty of Mathematics and Physics, Charles University, Sokolovská 83, 186 75, Praha 8, Czech Republic}
\thanks{The research was supported by the Charles University, project GAUK no. 154121 and the grant SVV-2020-260583.}
\subjclass[2010]{46B20; 46B50}

\usepackage{amsthm}
\usepackage{amsmath}
\usepackage{mathrsfs}
\usepackage{mathtools}
\usepackage[shortlabels]{enumitem}
\usepackage{verbatim}
\usepackage{amsfonts}
\usepackage{amssymb}
\usepackage{graphicx}
\usepackage[a-2u]{pdfx}
\usepackage{tikz}
\usepackage[most]{tcolorbox}
\usepackage{esint}
\usepackage{enumitem}
\usetikzlibrary{matrix,arrows}
\graphicspath{{images/}}

\newtheorem{theorem}{Theorem}[section]
\newtheorem{lemma}[theorem]{Lemma}
\newtheorem{prop}[theorem]{Proposition}

\newtheorem{question}{Question}

\theoremstyle{definition}
\newtheorem*{definition}{Definition}
\newtheorem*{remark}{Remark}
\newtheorem{example}[theorem]{Example}

\newcommand{\norm}[1]{\left\lVert#1\right\rVert}

\begin{document}

\begin{abstract}
    We investigate possible quantifications of Banach-Saks sets and weak Banach-Saks sets of higher orders and their relations to other quantities. We prove a quantitative version of the characterization of weak $\xi$-Banach-Saks sets using $\ell_1^{\xi+1}$-spreading models and a quantitative version of the relation of $\xi$-Banach-Saks sets, weak $\xi$-Banach-Saks sets, norm compactness and weak compactness. We further introduce a new measure of weak compactness. Finally, we provide some examples showing the limitations of these quantifications.
\end{abstract}

\maketitle

\section{Introduction}\label{section:Intro}

A Banach space $X$ is said to have the \textit{Banach-Saks property} if every bounded sequence in $X$ admits a Ces\`aro summable subsequence. This property was first investigated by Banach and Saks in \cite{BanachSaks1930}, where they showed that the spaces $L_p$ for $1<p<\infty$ enjoy this property. Every space with the Banach-Saks property is reflexive \cite{NishiuraWaterman1963} but there are reflexive spaces which do not have the Banach-Saks property, see \cite{Baernstein1972} or Example \ref{ExampleXiSchreierStar} below. However, every uniformly convex space (or more generally every super-reflexive space, as super-reflexive spaces admit a uniformly convex renorming \cite{Enflo1972}) has the Banach-Saks property \cite{Kakutani1939}.

A Banach space $X$ has the \textit{weak Banach-Saks property} if every weakly convergent sequence in $X$  admits a Ces\`aro summable subsequence. For reflexive spaces the weak Banach-Saks property and Banach-Saks property are equivalent but there are non-reflexive spaces that have the weak Banach-Saks property, like $c_0$ or $L_1$, see \cite{Farnum1974} and \cite{Szlenk1965}.

There is a localized version of these properties -- a bounded set $A$ in a Banach space $X$ is said to be a \textit{Banach-Saks set}, if every sequence in $A$ admits a Ces\`aro summable subsequence, and is called a \textit{weak Banach-Saks set}, if every weakly convergent sequence in $A$ admits a Ces\`aro summable subsequence. It follows that a Banach space $X$ has the Banach-Saks property, resp. the weak Banach-Saks property, if and only if its closed unit ball $B_X$ is a Banach-Saks set, resp. a weak Banach-Saks set. It is easy to see that a relatively weakly compact weak Banach-Saks set is a Banach-Saks set. The other implication also holds. Indeed, a Banach-Saks set is obviously a weak Banach-Saks set and the fact that it is also relatively weakly compact follows from \cite[Proposition 2.3.]{LopezRuizTradacete2014}. A quantitative version of this statement was investigated in \cite{Bendov__2015}.

The property of being a weak Banach-Saks set is closely tied to two other notions, which will be explained in detail in Section \ref{section:Prep} below. First of them is the notion of an \textit{$\ell_1$-spreading model}. Recall that a bounded sequence $(x_n)_{n \in \mathbb{N}}$ in a Banach space $X$ is said to generate an $\ell_1$-spreading model if there is a positive constant $c$ such that for all finite subsets $F$ of $\mathbb{N}$ satisfying $|F| \leq \min F$, where $|F|$ is the cardinality of the set $F$, and all sequences $(a_i)_{i \in F}$ of scalars we have
\begin{align*}
    \norm{ \sum_{i \in F} a_i x_i} \geq c \sum_{i \in F} |a_i|.
\end{align*}
The second related notion is \textit{uniform weak convergence}. A sequence $(x_n)_{n \in \mathbb{N}}$ is uniformly weakly convergent to $x$ if for each $\epsilon > 0$ there exists $n \in \mathbb{N}$ such that for all $x^* \in B_{X^*}$ we have
\begin{align*}
    \# \{k \in \mathbb{N}: \; |x^*(x_k-x)| \geq \epsilon\} \leq n,
\end{align*}
where $\# A$ is another notation for the cardinality of a set $A$. It follows from \cite[Section 2]{LopezRuizTradacete2014} that a bounded set $A$ in a Banach space $X$ is a weak Banach-Saks set, if and only if no weakly convergent sequence in $A$ generates an $\ell_1$-spreading model, if and only if every weakly convergent sequence in $A$ admits a uniformly weakly convergent subsequence. Quantitative version of this result was also provided in \cite{Bendov__2015}.

It follows from the Mazur's theorem that if we have a weakly null sequence $(x_n)_{n \in \mathbb{N}}$ in a Banach space $X$, then there is a sequence of convex combinations which converges to zero in norm. The weak Banach-Saks property of $X$ then means that these convex combinations can be chosen to be the Ces\`aro sums. In \cite{ArgyrosMercourakusTsarpalias} the authors investigated how regular these convex combinations can be in spaces failing the weak Banach-Saks property and defined the \textit{$\xi$-Banach-Saks property} and the \textit{weak $\xi$-Banach-Saks property} for a countable ordinal $\xi$ (see Section \ref{section:Prep}). The main goal of this paper is to provide quantifications, analogous to those provided in \cite{Bendov__2015}, for the properties of higher orders. The investigation of these properties also led to a new measure of weak non-compactness.

\section{Preparation}\label{section:Prep}

\subsection{Notation}

For an infinite subset $M$ of $\mathbb{N}$ we will denote by $[M]$ the set of all infinite subsets of $M$. On the other hand, for any subset $M$ of $\mathbb{N}$ we will denote by $[M]^{<\infty}$ the set of all finite subsets of $M$. If $n \in \mathbb{N}$ we will denote by $[M]^{<n}$ the sets of all subsets of $M$ of cardinality less than $n$.

If $M$ is an infinite subset of $\mathbb{N}$ and we write $M = (m_n)_{n \in \mathbb{N}}$, then we always mean that $M = \{m_n: \; n \in \mathbb{N}\}$ and $m_1 < m_2 < \dots$. We also use an analogous convention for finite subsets of $\mathbb{N}$.

For a Banach space $X$ we will denote by $B_X$ the closed unit ball of $X$ and by $S_X$ the unit sphere of $X$. In the special case where $X = \ell_1$, we will denote by $S_{\ell_1}^+$ the set of those elements of $S_{\ell_1}$ which have non-negative coordinates.

If $(a_n)_{n \in \mathbb{N}} \in \ell_1$, $F$ is a subset of integers and $(x_n)_{n \in \mathbb{N}}$ is a bounded sequence in a Banach space $X$, we set
\begin{itemize}
    \item $\langle (a_n)_{n \in \mathbb{N}}, F \rangle = \underset{n \in F}{\sum} a_n$;
    \item $(a_n)_{n \in \mathbb{N}} \cdot (x_n)_{n \in \mathbb{N}} = \underset{n \in \mathbb{N}}{\sum} a_n x_n$.
\end{itemize}
We denote the canonical basis of the space $c_{00}$ of eventually zero sequences by $(e_n)_{n \in \mathbb{N}}$.

Let $A, B$ be subsets of $\mathbb{N}$. If we write $A < B$, then we mean that $\max A < \min B$. Analogously, $A \leq B$ means that $\max A \leq \min B$. We write $n \leq A$, resp. $n < A$, instead of $\{n\} \leq A$, resp. $\{n\} < A$.

If $F$ is a finite set, we will write $|F|$ or $\# F$ for the cardinality of $F$.

If $(x_n)_{n \in \mathbb{N}}$ is a sequence and $M = (m_n)_{n \in \mathbb{N}} \in [\mathbb{N}]$, then we denote the subsequence $(x_{m_n})_{n \in \mathbb{N}}$ by $(x_n)_{n \in M}$.

\subsection{Families of subsets of integers}

We identify subsets of $\mathbb{N}$ with their characteristic functions, and thus with elements of the Cantor set $\{0,1\}^{\mathbb{N}}$. This characterization provides us with a metrizable topology on the power set of $\mathbb{N}$.

\begin{definition}
    Let $\mathcal{F}$ be a family of finite sets of integers. We say that $\mathcal{F}$ is
    \begin{itemize}
        \item \textit{Hereditary}, if $A \in \mathcal{F}$ and $B \subseteq A$ implies $B \in F$;
        \item \textit{Precompact}, if the closure of $\mathcal{F}$ consists only of finite sets;
        \item \textit{Adequate}, if it is both hereditary and precompact.
    \end{itemize}
    If $M \in [\mathbb{N}]$, we define the \textit{trace} of $\mathcal{F}$ on $M$ by
    \begin{align*}
        \mathcal{F}[M] = \{F \cap M: \; F \in \mathcal{F}\}.
    \end{align*}
\end{definition}

Note that the trace of an adequate family is also adequate. If $\mathcal{F}$ is hereditary, then $\mathcal{F}[M] = \{F \in \mathcal{F}: \; F \subseteq M\}$.

\subsection{Schreier families and Repeated Averages} \label{subsection:SHRA}

In this subsection we will define the Schreier families and the Repeated Averages. For a countable limit ordinal $\xi$ we fix an increasing sequence $(\xi_n)_{n \in \mathbb{N}}$ of successor ordinals with $\xi = \sup \xi_n$. This choice is necessary for us to define the Schreier families and Repeated Averages for limit ordinals. While these definitions certainly depend on this choice, some of the quantities defined in the following subsection do not. The independence on this choice will be discussed in detail in Section \ref{section:Remarks} below.

\begin{definition}
    The \textit{Schreier families} $(\mathcal{S}_\xi)_{\xi < \omega_1}$ are defined recursively. First we define the family $\mathcal{S}_0$ as
    \begin{align*}
        \mathcal{S}_0 = \{\{n\}: n \in \mathbb{N}\} \cup \{\emptyset\}.
    \end{align*}
    For a successor ordinal $\xi + 1 < \omega_1$ we define
    \begin{align*}
        \mathcal{S}_{\xi+1} = \left\{\bigcup_{i=1}^n F_i: \; n \leq F_1 < F_2 < \cdots < F_n, \; F_i \in \mathcal{S}_\xi, \; n \in \mathbb{N} \right\} \cup \{\emptyset\}
    \end{align*}
    and for a limit ordinal $\xi < \omega_1$ we take the fixed increasing sequence of successor ordinals $(\xi_n)_{n \in \mathbb{N}}$ with $\xi = \sup \xi_n$ and define
    \begin{align*}
        \mathcal{S}_\xi = \left\{F \in \mathcal{S}_{\xi_n}: \; n \leq F, \; n \in \mathbb{N} \right\} \cup \{\emptyset\}.
    \end{align*}
\end{definition}

Note that the family $\mathcal{S}_{\xi+1}$ always contains the family $\mathcal{S}_\xi$. On the other hand, it is not generally true that the family $\mathcal{S}_\zeta$ contains the family $\mathcal{S}_\xi$ for $\zeta > \xi$. It does, however, contain all the sets from $\mathcal{S}_\xi$ with sufficiently large minimal element, see \cite[Lemma 2.1.8.(a)]{ArgyrosMercourakusTsarpalias}. The family $\mathcal{S}_1$ is the classical Schreier family
\begin{align*}
    \mathcal{S}_1 = \{F \in [\mathbb{N}]^{< \infty}: \; |F| \leq \min F\} \cup \{\emptyset\}.
\end{align*}
It is readily proved by induction that the families $\mathcal{S}_\xi$, $\xi < \omega_1$, are adequate and have the following spreading property:
\begin{align*}
    &\text{If } F = (f_1,\dots,f_n) \in \mathcal{S}_\xi \text{ and } G = (g_1,\dots,g_n) \text{ is such} \\
    &\text{that } f_i \leq g_i, \; i = 1,\dots,n, \text{ then } G \in \mathcal{S}_\xi.
\end{align*}

\begin{definition}
    Let $\xi < \omega_1$ and $M = (m_n)_{n \in \mathbb{N}} \in [\mathbb{N}]$. We define
    \begin{align*}
        \mathcal{S}_\xi^M = \{(m_i)_{i \in F}: \; F \in \mathcal{S}_\xi\}.
    \end{align*}
\end{definition}

In the case that $\xi = 0$, we have that
\begin{align*}
    \mathcal{S}_0^M = \mathcal{S}_0 [M] = \{ \{m_n\}: \; n \in \mathbb{N} \} \cup \{\emptyset\}.
\end{align*}
However, if $\xi > 0$, then $\mathcal{S}_\xi^M \subsetneq \mathcal{S}_\xi [M]$. Indeed $\mathcal{S}_\xi^M$ is a subset of $\mathcal{S}_\xi$ by the spreading property of the family $\mathcal{S}_\xi$ as $i \leq m_i$ for each $i \in \mathbb{N}$, and the sets from $\mathcal{S}_\xi^M$ are obviously subsets of $M$. The fact that the inclusion is strict can be proved by induction and is illustrated by the following example: If we set $m_n = n + 1$ and $M = (m_n)_{n \in \mathbb{N}} \in [\mathbb{N}]$, then the set $\{2,3\} \in \mathcal{S}_1[M] \setminus \mathcal{S}_1^M$. For more information about the relation of the families $\mathcal{S}_\xi^M$ and $\mathcal{S}_\xi [M]$ see \cite[Remark 2.1.12]{ArgyrosMercourakusTsarpalias}.

\begin{definition}
    Let $M  \in [\mathbb{N}]$. An $M$-summability method is a sequence $(A_n)_{n \in \mathbb{N}}$ where $A_n \in S_{\ell_1}^+$ are such that $\operatorname{supp}A_n < \operatorname{supp} A_{n+1}$ for all $n \in \mathbb{N}$ and $M = \bigcup_{n=1}^\infty \operatorname{supp} A_n$, where $\operatorname{supp} F$ denotes the support of an element $F$ of $\ell_1$, that is the set of coordinates where $F$ is nonzero.
    
    We say that a bounded sequence $(x_n)_{n \in \mathbb{N}}$ in some Banach space $X$ is $(A_n)_{n \in \mathbb{N}}$-summable if the sequence $\left( A_n \cdot (x_k)_{k \in \mathbb{N}} \right)_{n \in \mathbb{N}}$ is Ces\`aro summable.
\end{definition}

Note that if $A_n = e_{m_n}$ for some increasing sequence $M = (m_n)_{n \in \mathbb{N}}$ of integers, then the $(A_n)_{n \in \mathbb{N}}$-summability of a sequence $(x_n)_{n \in \mathbb{N}}$ is just the Ces\`aro summability of the subsequence $(x_{m_n})_{n \in \mathbb{N}}$. One important fact we will need later is the simple observation that the summability methods preserve convergence. We will specifically use that if a sequence $(x_n)_{n \in \mathbb{N}}$ is weakly null, then $(A_n \cdot (x_k)_{k \in \mathbb{N}})_{n \in \mathbb{N}}$ is also weakly null for any $M$-summability method $(A_n)_{n \in \mathbb{N}}$.

The Repeated Averages are a special type of summability methods that arise by iterating consecutive averages.

\begin{definition}
    Let $M = (m_n)_{n \in \mathbb{N}} \in [\mathbb{N}]$. The \textit{Repeated Averages} are the $M$-summability methods $(\xi_n^M)_{n \in \mathbb{N}}$, $\xi < \omega_1$, which are defined recursively in the following way.
    \begin{enumerate}
        \item If $\xi = 0$, we set $\xi_n^M = e_{m_n}$, $n \in \mathbb{N}$.
        \item If $\xi = \zeta + 1$ and $(\zeta_n^M)_{n \in \mathbb{N}}$ have already been defined, we recursively define $\xi_n^M$ in the following way
        \begin{align*}
            &k_1 = 0, \;s_1 = \min \operatorname{supp} \zeta^M_1 = \min M, & \xi_1^M = \frac{1}{s_1} \sum_{i= 1}^{s_1} \zeta_i^M \\
            & \vdots & \\
            &k_n = k_{n-1} + s_{n-1}, \; s_n = \min \operatorname{supp} \zeta^M_{k_n+1}, & \xi_n^M = \frac{1}{s_n} \sum_{i = k_n + 1}^{k_n + s_n} \zeta_i^M \\
            & \vdots &
        \end{align*}
        \item If $\xi$ is a limit ordinal and $(\zeta^N_n)_{n \in \mathbb{N}}$ have already been defined for all $\zeta < \xi$ and $N \in [\mathbb{N}]$, we take the increasing sequence of successor ordinals $(\xi_n)_{n \in \mathbb{N}}$ which was used to define the Schreier family $\mathcal{S}_\xi$. We use the notation $[\xi_{n}]_j^N$ for the already defined summability method $\zeta_j^N$ for $\zeta = \xi_n$. Set
        \begin{align*}
            &M_1 = M, & n_1 = m_1, \\
            &M_2 = M_1 \setminus \operatorname{supp} [\xi_{n_1}]_1^{M_1}, & n_2 = \min M_2, \\
            &\vdots& \\
            &M_j = M_{j-1} \setminus \operatorname{supp} [\xi_{n_{j-1}}]_1^{M_{j-1}}, & n_j = \min M_j, \\
            &\vdots&
        \end{align*}
        Finally we set for $j \in \mathbb{N}$
        \begin{align*}
            \xi_j^M = [\xi_{n_j}]_1^{M_j}.
        \end{align*}
    \end{enumerate}
\end{definition}

It is readily proved by induction that for each $\xi < \omega_1$ and $M \in [\mathbb{N}]$ the sequence $(\xi^M_n)_{n \in \mathbb{N}}$ is an $M$-summability method. We say that a sequence $(x_n)_{n \in \mathbb{N}}$ is $(\xi,M)$-summable instead of $(\xi^M_n)_{n \in \mathbb{N}}$-summable.

A nice property of the Repeated Averages is that their supports are elements of the corresponding Schreier family, that is $\operatorname{supp} \xi_n^M \in \mathcal{S}_\xi [M]$ for all $M \in [\mathbb{N}]$ and $n \in \mathbb{N}$.

\subsection{$\ell_1^\xi$-spreading models and (weak) $\xi$-Banach-Saks sets}

\begin{definition}
    Let $(x_n)_{n \in \mathbb{N}}$ be a bounded sequence in a Banach space $X$, $\xi < \omega_1$ and $c > 0$. We say that $(x_n)_{n \in \mathbb{N}}$ generates an $\ell_1^\xi$-spreading model with constant $c$ if
    \begin{align*}
        \forall F \in \mathcal{S}_\xi \; \forall (\alpha_i)_{i \in F} \in \mathbb{R}^F : \; \norm{\sum_{i \in F} \alpha_i x_i} \geq c \sum_{i \in F} |\alpha_i|.
    \end{align*}
    We say that $(x_n)_{n \in \mathbb{N}}$ generates an $\ell_1^\xi$-spreading model if it generates an $\ell_1^\xi$-spreading model with some constant $c > 0$.
\end{definition}

This definition generalises the classical notion of an $\ell_1$-spreading model, which corresponds to the case $\xi = 1$.

\begin{definition}
    Let $A$ be a bounded subset of a Banach space $X$ and $\xi < \omega_1$. We say that $A$ is a $\xi$-Banach-Saks set if for every sequence $(x_n)_{n \in \mathbb{N}}$ in $A$ there is some $M \in [\mathbb{N}]$ such that $(x_n)_{n \in \mathbb{N}}$ is $(M,\xi)$-summable.
    
    $A$ is called a weak $\xi$-Banach-Saks set if the same property holds for weakly convergent sequences in $A$, that is, if for every sequence $(x_n)_{n \in \mathbb{N}}$ in $A$ weakly convergent to some $x \in X$ there is some $M \in [\mathbb{N}]$ such that $(x_n)_{n \in \mathbb{N}}$ is $(M,\xi)$-summable.
\end{definition}

These definitions generalise the notion of a Banach-Saks set and a weak Banach-Saks set, which correspond to the case $\xi = 0$. Following \cite{Bendov__2015}, we will now define some quantities that we will later use to quantify the notions of (weak) $\xi$-Banach-Saks sets and $\ell_1^{\xi}$-spreading models.

\begin{definition}
    Let $(x_n)_{n \in \mathbb{N}}$ be a bounded sequence in a Banach space $X$. We define the following two quantities
    \begin{itemize}
        \item $\operatorname{ca}(x_n) = \underset{n \in \mathbb{N}}{\inf} \sup \{\norm{x_k - x_l}: \; k,l \geq n\}$;
        \item $\operatorname{cca}(x_n) = \operatorname{ca} \left( \frac{1}{n} \sum_{i=1}^n x_i \right)$.
    \end{itemize}
\end{definition}

The quantity $\operatorname{ca}$ measures how far a given sequence is from being norm Cauchy. Indeed, $\operatorname{ca}(x_n) = 0$ if and only if the sequence $(x_n)_{n \in \mathbb{N}}$ is norm Cauchy. The quantity $\operatorname{cca}$ then measures how far are the Ces\`aro sums of a given sequence from being norm Cauchy.

\begin{definition}
    Let $(x_n)_{n \in \mathbb{N}}$ be a bounded sequence in a Banach space $X$ and $\xi < \omega_1$. We define
    \begin{align*}
        \widetilde{\operatorname{cca}}_\xi ((x_n)_{n \in \mathbb{N}}) &= \inf_{M \in [\mathbb{N}]} \left( \operatorname{cca}(\xi^M_n \cdot (x_k)_{k \in \mathbb{N}}) \right) \\
        \widetilde{\operatorname{cca}}_\xi^s ((x_n)_{n \in \mathbb{N}}) &= \sup_{M \in [\mathbb{N}]} \left( \inf_{N \in [M]} \operatorname{cca}(\xi^N_n \cdot (x_k)_{k \in \mathbb{N}}) \right).
    \end{align*}
\end{definition}
The quantity $\widetilde{\operatorname{cca}}_0$ is the quantity $\widetilde{\operatorname{cca}}$ used in \cite{Bendov__2015} and measures how far a given sequence is from containing a Ces\`aro summable subsequence. We will, however, mostly work with the quantity $\widetilde{\operatorname{cca}}_0^s$, which measures if all subsequences of a given sequence contain a further subsequence which is Ces\`aro summable, and with its generalizations for $\xi > 0$. The precise correspondence between these quantities for $\xi = 0$ is
\begin{align*}
    \widetilde{\operatorname{cca}}_0^s ((x_n)_{n \in \mathbb{N}}) = \sup \{\widetilde{\operatorname{cca}}_0 ((y_n)_{n \in \mathbb{N}}): \; (y_n)_{n \in \mathbb{N}} \text{ is a subsequence of } (x_n)_{n \in \mathbb{N}}\},
\end{align*}
for larger $\xi$ the correspondence is not so clear.

Now we can define the quantifications of the notions of (weak) $\xi$-Banach-Saks sets and $\ell_1^\xi$-spreading models.

\begin{definition}
    Let $A$ be a bounded subset of a Banach space $X$ and $\xi < \omega_1$. We define the following quantities:
    \begin{align*}
        \operatorname{sm}_\xi (A) = \sup \{ c > 0: \;& \text{there is a sequence } (x_n)_{n \in \mathbb{N}} \text{ in } A \text{ weakly convergent} \\
        &\text{to some } x \in X \text{ such that } (x_n - x)_{n \in \mathbb{N}} \text{ generates} \\
        &\text{an } \ell_1^\xi \text{-spreading model with constant } c \},
    \end{align*}
    where we set the supremum of the empty set to be zero, and
    
    \begin{align*}
        \operatorname{bs}_\xi (A) &= \sup \{\widetilde{\operatorname{cca}}_\xi (x_n): \; (x_n)_{n \in \mathbb{N}} \text{ is a sequence in } A \} \\
        \operatorname{wbs}_\xi (A) &= \sup \{\widetilde{\operatorname{cca}}_\xi (x_n): \; (x_n)_{n \in \mathbb{N}} \text{ is a weakly convergent sequence in } A \} \\
        \operatorname{bs}_\xi^s (A) &= \sup \{\widetilde{\operatorname{cca}}_\xi^s (x_n): \; (x_n)_{n \in \mathbb{N}} \text{ is a sequence in } A \} \\
        \operatorname{wbs}_\xi^s (A) &= \sup \{\widetilde{\operatorname{cca}}_\xi^s (x_n): \; (x_n)_{n \in \mathbb{N}} \text{ is a weakly convergent sequence in } A \}.
    \end{align*}
\end{definition}

It follows from the definition that $\operatorname{sm}_\xi(A) = 0$ if and only if $A$ contains no sequence $(x_n)_{n \in \mathbb{N}}$ weakly convergent to some $x \in X$ such that $(x_n-x)_{n \in \mathbb{N}}$ generates an $\ell_1^\xi$-spreading model. The fact that $\operatorname{bs}_\xi (A) = 0$ (resp. $\operatorname{wbs}_\xi (A) = 0$) if and only if $A$ is a $\xi$-Banach-Saks set (resp. weak $\xi$-Banach-Saks set) will be shown later in Proposition \ref{PropBS} for $\xi$-Banach-Saks sets and Propostion \ref{PropWBS} for weak $\xi$-Banach-Saks sets. For $\xi = 0$ we have $\operatorname{bs}_0(A) = \operatorname{bs}_0^s(A)$ and $\operatorname{wbs}_0(A) = \operatorname{wbs}_0^s(A)$ as any subsequence of a sequence in $A$ is also a sequence in $A$. For larger $\xi$ we trivially have $\operatorname{bs}_\xi(A) \leq \operatorname{bs}_\xi^s(A)$ and $\operatorname{wbs}_\xi(A) \leq \operatorname{wbs}_\xi^s(A)$. We will show later in Theorem \ref{TheoremQuantBS} that the quantities $\operatorname{wbs}_\xi$ and $\operatorname{wbs}_\xi^s$ are equivalent for $\xi > 0$.

\subsection{$(\xi,c)$-large sets and uniformly weakly converging sequences}

\begin{definition}
    Let $(x_n)_{n \in \mathbb{N}}$ be a weakly null sequence in a Banach space $X$ and $\delta > 0$. We define the family
    \begin{align*}
        \mathcal{F}_\delta ((x_n)_{n \in \mathbb{N}}) = \{F \in [\mathbb{N}]^{< \infty}: \; \text{there is } x^* \in B_{X^*} \text{ with } x^*(x_n) \geq \delta, \; n \in F\}.
    \end{align*}
    We will usually write only $\mathcal{F}_\delta$ instead of $\mathcal{F}_\delta ((x_n)_{n \in \mathbb{N}})$ if it causes no confusion.
\end{definition}

Note that the family $\mathcal{F}_\delta$ is obviously hereditary and is also precompact as the sequence $(x_n)_{n \in \mathbb{N}}$ is weakly null. Indeed, suppose there is a sequence $(F_n)_{n=1}^\infty$ of sets from the family $\mathcal{F}_\delta$ that converges to an infinite set $F \in [\mathbb{N}]$. For $n \in \mathbb{N}$ let $x_n^*$ be an element of $B_{X^*}$ witnessing that $F_n$ belongs to $\mathcal{F}_\delta$ and let $x^*$ be any weak$^*$ cluster point of $(x_n^*)_{n=1}^\infty$. Then, for a fixed $k \in F$, we have $x_n^*(x_k) \geq \delta$ for all but finitely many $n \in \mathbb{N}$. Hence, $x^*(x_k) \geq \delta > 0$ for infinitely many $k \in \mathbb{N}$ and $(x_n)_{n=1}^\infty$ is not weakly null -- a contradiction. Hence, $\mathcal{F}_\delta$ is an adequate family of subsets of $\mathbb{N}$. 

To use some of the results of \cite{ArgyrosMercourakusTsarpalias}, we need to present an alternative definition of $\mathcal{F}$ using weakly compact subsets of $c_0$. Let us define $D = \{(x^*(x_n))_{n=1}^\infty: \; x^* \in B_{X^*}\}$. Then $D$ is a weakly compact subset of $c_0$. Indeed, $D$ is the image of $B_{X^*}$ under the weak$^*$-to-weak continuous mapping $x^* \mapsto (x^*(x_n))_{n=1}^\infty$. It follows that
\begin{align*}
    \mathcal{F}_\delta = \{ F \in [\mathbb{N}]^{<\infty}: \; \text{there is } f \in D \text{ with } f(n) \geq \delta, \; n \in F\}.
\end{align*}

We now recall the definition of uniformly weakly convergent sequences.

\begin{definition}
    A sequence $(x_n)_{n \in \mathbb{N}}$ in a Banach space $X$ is said to be uniformly weakly convergent to some $x$ in $X$ if for each $\epsilon > 0$
    \begin{align*}
        \exists n \in \mathbb{N} \; \forall x^* \in B_{X^*} : \; \# \{k \in \mathbb{N}: \; |x^*(x_k-x)| \geq \epsilon\} \leq n.
    \end{align*}
    Note that the absolute value in this definition can be omitted, that is $(x_n)_{n \in \mathbb{N}}$ is uniformly weakly convergent to $x \in X$ if and only if for all $\epsilon > 0$
    \begin{align*}
        \exists n \in \mathbb{N} \; \forall x^* \in B_{X^*} : \; \# \{k \in \mathbb{N}: \; x^*(x_k-x) \geq \epsilon\} \leq n,
    \end{align*}    
\end{definition}

Uniform weak convergence can be used to characterize Banach-Saks (resp. weak Banach-Saks) sets. Precisely, a bounded set $A$ in a Banach space $X$ is a Banach-Saks (resp. weak Banach-Saks) set, if and only if every (resp. every weakly convergent) sequence in $A$ has a uniformly weakly convergent subsequence, see \cite[Theorem 2.4.]{LopezRuizTradacete2014}.

\begin{definition}
    Let $(x_n)_{n \in \mathbb{N}}$ be a sequence in a Banach space $X$ weakly converging to $x \in X$, $c > 0$ and $\xi < \omega_1$. We say that $(x_n)_{n \in \mathbb{N}}$ is $(\xi,c)$-large if there is $M \in  [\mathbb{N}]$ such that $\mathcal{S}_\xi^M \subseteq \mathcal{F}_c ((x_n - x)_{n \in \mathbb{N}})$.
\end{definition}

\begin{definition}
    Let $A$ be a bounded subset of a Banach space $X$ and $\xi < \omega_1$. We define the quantity
    \begin{align*}
        \operatorname{wus}_\xi (A) = \sup \{c>0: \; &\text{there is a sequence }(x_n)_{n \in \mathbb{N}} \text{ in } A \\
        & \text{weakly convergent to some } x \in X \\
        &\text{such that } (x_n)_{n \in \mathbb{N}} \text{ is } (\xi,c)\text{-large}\},
    \end{align*}
    where we again set the supremum of the empty set to be zero.
\end{definition}

The quantity $\operatorname{wus}_\xi$ is a generalization of the quantity $\operatorname{wus}$ used in \cite{Bendov__2015} which measures how far is $A$ from having the property that every weakly convergent sequence in $A$ has a uniformly weakly convergent subsequence. Indeed, for a bounded set $A$ we have $\operatorname{wus}_1(A) = \operatorname{wus}(A)$ which will follow from the following lemma for sequences. It uses the quantity $\widetilde{\operatorname{wu}}$, which is defined in \cite{Bendov__2015} and used to define the quantity $\operatorname{wus}$.

\begin{lemma}
    Let $(x_n)_{n \in \mathbb{N}}$ be a sequence in a Banach space $X$ weakly convergent to some $x \in X$ and let $c>0$. Then
    \begin{enumerate}[(i)]
        \item If $(x_n)_{n \in \mathbb{N}}$ is $(1,c)$-large, then there is $M \in [\mathbb{N}]$ such that $\widetilde{\operatorname{wu}}((x_n)_{n \in M}) \geq c$.
        \item If $\widetilde{\operatorname{wu}}((x_n)_{n \in \mathbb{N}}) > c$, then $(x_n)_{n \in \mathbb{N}}$ is $(1,c)$-large.
    \end{enumerate}
\end{lemma}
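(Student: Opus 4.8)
The plan is to read both implications off the combinatorics of the adequate family $\mathcal{F}_c := \mathcal{F}_c((x_n - x)_{n\in\mathbb{N}})$ (which is hereditary and precompact, as recalled after its definition) and of its traces $\mathcal{F}_c[L]$, $L\in[\mathbb{N}]$. Since replacing $(x_n)$ by $(x_n - x)$ changes neither the notion of $(1,c)$-largeness nor $\widetilde{\operatorname{wu}}$, I may assume $x=0$. Two facts will be used throughout. First, the elementary description of the Schreier-$1$ family on $M=(m_j)_{j\in\mathbb{N}}$: a finite set belongs to $\mathcal{S}_1^M$ precisely when it has the form $\{m_{i_1}<\dots<m_{i_l}\}$ with $l\le i_1$; in particular $\mathcal{S}_1^M$ contains sets of every cardinality, and $\mathcal{S}_1^N\subseteq\mathcal{S}_1^M$ whenever $N\in[M]$. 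Second, it is a routine reformulation of the definition of $\widetilde{\operatorname{wu}}$ from \cite{Bendov__2015} (the infimum over subsequences of the ``distance to uniform weak convergence'') that, for a weakly convergent sequence $(y_n)$ with limit $y$, one has $\widetilde{\operatorname{wu}}((y_n))=\inf_{L\in[\mathbb{N}]}\inf\{\delta>0:\mathcal{F}_\delta((y_n-y)_n)[L]\text{ has bounded cardinalities}\}$; consequently $\widetilde{\operatorname{wu}}((y_n))\ge c$ if and only if for every $L\in[\mathbb{N}]$ and every $\delta<c$ the trace $\mathcal{F}_\delta((y_n-y)_n)[L]$ contains sets of arbitrarily large cardinality, and in particular $\widetilde{\operatorname{wu}}((y_n))>c$ forces $\mathcal{F}_c((y_n-y)_n)[L]$ to contain sets of arbitrarily large cardinality for every $L$.

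Granting this, (i) is short. Let $M=(m_j)_{j\in\mathbb{N}}$ witness that $(x_n)$ is $(1,c)$-large, so $\mathcal{S}_1^M\subseteq\mathcal{F}_c$. Re-indexing by $M$, this says exactly that $\mathcal{S}_1\subseteq\mathcal{F}_c((x_{m_j})_j)$. Since $\mathcal{F}_c$ is hereditary, for any $N\in[\mathbb{N}]$ and any $\delta\le c$ we obtain $\mathcal{S}_1^N\subseteq\mathcal{S}_1[N]\subseteq\mathcal{F}_c((x_{m_j})_j)[N]\subseteq\mathcal{F}_\delta((x_{m_j})_j)[N]$, and the left-hand side has sets of every cardinality. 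By the reformulation above this yields $\widetilde{\operatorname{wu}}((x_n)_{n\in M})\ge c$, which is precisely (i).

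For (ii) the real work is to produce the set $M$. By the reformulation, $\widetilde{\operatorname{wu}}((x_n))>c$ gives that $\mathcal{F}_c[L]$ has sets of arbitrarily large cardinality for \emph{every} $L\in[\mathbb{N}]$; since $\mathcal{F}_c$ is hereditary, for every $L\in[\mathbb{N}]$ and every $k\in\mathbb{N}$ there is $F\subseteq L$ with $|F|=k$ and $F\in\mathcal{F}_c$. I claim this already forces $\mathcal{S}_1^M\subseteq\mathcal{F}_c$ for a suitable $M$, which is exactly $(1,c)$-largeness. Indeed, fix $k\in\mathbb{N}$ and $L\in[\mathbb{N}]$ and two-colour $[L]^k$ according to membership in $\mathcal{F}_c$; an infinite subset of $L$ homogeneous in the colour ``not in $\mathcal{F}_c$'' would contradict the previous sentence applied to it, so by Ramsey's theorem there is $L'\in[L]$ with $[L']^k\subseteq\mathcal{F}_c$. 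Iterating, one builds a decreasing chain $M_1\supseteq M_2\supseteq\cdots$ in $[\mathbb{N}]$ with $[M_k]^k\subseteq\mathcal{F}_c$ for each $k$, and then diagonalises: put $m_1=\min M_1$, $m_k=\min\{j\in M_k:j>m_{k-1}\}$, and $M=\{m_k:k\in\mathbb{N}\}$. If $\{m_{i_1}<\dots<m_{i_l}\}\in\mathcal{S}_1^M$ then $l\le i_1\le i_r$ for every $r$, so $m_{i_r}\in M_{i_r}\subseteq M_l$; hence this set is an $l$-element subset of $M_l$ and therefore lies in $\mathcal{F}_c$. Thus $\mathcal{S}_1^M\subseteq\mathcal{F}_c$, as required.

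The main obstacle is the construction in (ii): getting from $\widetilde{\operatorname{wu}}((x_n))>c$ to the uniform statement ``$\mathcal{F}_c[L]$ has arbitrarily large sets for every $L$'' — this is exactly where the strict inequality in the hypothesis is needed, since at the threshold value the family can already have bounded cardinalities — and then carrying out the Ramsey-plus-diagonalisation argument so that the Schreier constraint $l\le\min$ is matched by the sizes of the finite sets extracted at successive stages. Direction (i), by contrast, is essentially bookkeeping with traces of $\mathcal{S}_1$.
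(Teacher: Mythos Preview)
Your proof is correct and follows essentially the same approach as the paper. For (i) both arguments are identical: take the witnessing $M$, observe that $\mathcal{S}_1^N\subseteq\mathcal{F}_c$ for every $N\in[M]$ by spreading, and conclude that no subsequence can have bounded $\mathcal{F}_c$-sets. For (ii) the paper simply invokes \cite[Lemma 1.13]{Mercourakis1995} to obtain $M$ with $[M_k]^k\subseteq\mathcal{F}_c$ for the tails $M_k$, whereas you supply that construction explicitly via the finite Ramsey theorem plus diagonalisation; this is precisely the content of Mercourakis's lemma, so the two proofs coincide, yours being self-contained where the paper cites a reference.
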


\begin{proof}
    If $(x_n)_{n \in \mathbb{N}}$ is $(1,c)$-large, then there is $M \in [\mathbb{N}]$ such that $\mathcal{S}_1^M \subseteq \mathcal{F}_c((x_n-x)_{n \in \mathbb{N}})$. We will show that $\widetilde{\operatorname{wu}}((x_n)_{n \in M}) \geq c$. Indeed, any subsequence of $(x_n)_{n \in M}$ is of the form $(x_n)_{n \in N}$ for some $N \in [M]$. It follows from the spreading property of $\mathcal{S}_1$ that $\mathcal{S}_1^N \subseteq \mathcal{S}_1^M$. Hence, $\mathcal{S}_1^N \subseteq \mathcal{F}_c((x_n-x)_{n \in \mathbb{N}})$ which is easily seen to be equivalent to saying $\mathcal{S}_1 \subseteq \mathcal{F}_c((x_n-x)_{n \in N})$. Therefore, as $\mathcal{S}_1$ contains sets of arbitrarily large cardinality, there is no $n \in \mathbb{N}$ such that
    \begin{align*}
        \forall x^* \in B_{X^*} : \; \# \{k \in N: \; |x^*(x_k-x)| \geq c\} \leq n
    \end{align*}
    and $\operatorname{wu}((x_n)_{n \in N}) \geq c$. It now follows from the definition that $\widetilde{\operatorname{wu}}((x_n)_{n \in M}) \geq c$.
    
    The other inequality follows from the proof of \cite[Lemma 1.13.]{Mercourakis1995} (note that we apply the lemma for $\delta = c$, $\Gamma = B_{X^*}$ and that the family $\mathcal{A}_\delta$ in this proof is nothing else than $\mathcal{F}_c$ in our notation). This lemma yields that if $\widetilde{\operatorname{wu}}((x_n)_{n \in \mathbb{N}}) > c$, then there is $M = (m_1,m_2,\dots) \in [\mathbb{N}]$ such that, if we set $M_k = (m_k,m_{k+1},\dots)$ for $k \in \mathbb{N}$, then $[M_k]^k \subseteq \mathcal{F}_c ((x_n-x)_{n \in \mathbb{N}})$. But
    \begin{align*}
        \bigcup_{k=1}^\infty [M_k]^k = \{(m_i)_{i \in F}: \; m_{|F|} \leq m_{\min F}\} = \{(m_i)_{i \in F}: \; |F| \leq \min F\} = \mathcal{S}_1^M. 
    \end{align*}
    Hence, $\mathcal{S}_1^M \subseteq \mathcal{F}_c ((x_n-x)_{n \in \mathbb{N}})$ and $(x_n)_{n \in \mathbb{N}}$ is $(1,c)$-large.
\end{proof}

\section{Quantitative characterization of weak $\xi$-Banach-Saks sets} \label{section:QuantWBS}

In this section we will prove a quantified version of the following theorem from \cite{ArgyrosMercourakusTsarpalias}, which is a natural generalization of the characterization of weakly null sequences with no Ces\`aro summable subsequences using $\ell_1$-spreading models (see \cite[Section 2]{LopezRuizTradacete2014}).

\begin{theorem}\cite[Theorem 2.4.1]{ArgyrosMercourakusTsarpalias} \label{TheoremAMT}
    Let $(x_n)_{n \in \mathbb{N}}$ be a weakly null sequence in a Banach space $X$ and $\xi < \omega_1$. Then exactly one of the following holds.
    \begin{enumerate}[(a)]
        \item For every $M \in [\mathbb{N}]$ there is $L \in [M]$ such that for every $P \in [L]$ the sequence $(x_n)_{n \in \mathbb{N}}$ is $(P,\xi)$-summable.
        \item There is $M = (m_n)_{n \in \mathbb{N}} \in [\mathbb{N}]$ such that the sequence $(x_{m_n})_{n \in \mathbb{N}}$ generates an $\ell_1^{\xi+1}$-spreading model.
    \end{enumerate}
\end{theorem}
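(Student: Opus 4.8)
The plan is to prove the dichotomy by a combination of a transfinite-induction argument on $\xi$ together with the combinatorial Ramsey-type stabilization inherent in the adequate family $\mathcal{F}_c((x_n)_{n\in\mathbb{N}})$. First I would observe that (a) and (b) are mutually exclusive: if $(x_{m_n})_{n\in\mathbb{N}}$ generates an $\ell_1^{\xi+1}$-spreading model with constant $c$, then for $P\subseteq M$ the averaging functionals that witness the spreading model applied to the supports $\operatorname{supp}\xi_n^P\in\mathcal{S}_\xi[P]\subseteq\mathcal{S}_{\xi+1}$ show $\norm{\xi_n^P\cdot(x_k)_{k\in\mathbb{N}}}\geq c$ for every $n$, while the Ces\`aro sums of a $(P,\xi)$-summable sequence must be norm-Cauchy, forcing these averages to converge in norm to some $y$ with $\norm{y}\geq c$; but a sequence generating an $\ell_1^{\xi+1}$-spreading model is weakly null along $M$ (being a subsequence of a weakly null sequence), hence so are the averages $\xi_n^P\cdot(x_k)_{k\in\mathbb{N}}$ by the convergence-preserving property of summability methods, so $y=0$, a contradiction. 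Thus at most one of (a), (b) holds.

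The substance is showing at least one holds. Here I would use the reformulation via the weakly compact set $D=\{(x^*(x_n))_{n}: x^*\in B_{X^*}\}\subseteq c_0$ and the families $\mathcal{F}_\delta=\mathcal{F}_\delta((x_n)_{n\in\mathbb{N}})$ for rational $\delta>0$, which are adequate. The key structural tool from \cite{ArgyrosMercourakusTsarpalias} is the dichotomy for adequate families: for an adequate family $\mathcal{F}$ and any $M\in[\mathbb{N}]$, either there is $L\in[M]$ with $\mathcal{F}[L]\subseteq\mathcal{S}_\xi$ (the ``small'' alternative), or there is $L\in[M]$ with $\mathcal{S}_\xi^L\subseteq\mathcal{F}$ (the ``large'' alternative). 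Applying this to $\mathcal{F}_c$ for the fixed $\xi$: if for some rational $c>0$ and some $M_0\in[\mathbb{N}]$ we land in the large alternative, getting $L$ with $\mathcal{S}_\xi^L\subseteq\mathcal{F}_c$, then — and this is where the real work lies — one shows this forces a subsequence generating an $\ell_1^{\xi+1}$-spreading model, giving (b). This implication is essentially the content of the proof that a $(\xi,c)$-large sequence yields an $\ell_1^{\xi+1}$-spreading model; it proceeds by taking $F\in\mathcal{S}_{\xi+1}$, writing $F=\bigcup_{i=1}^n F_i$ with $n\leq F_1<\cdots<F_n$, $F_i\in\mathcal{S}_\xi$, finding for the averaging weights on each block a functional $x_i^*\in B_{X^*}$ with $x_i^*(x_k)\geq c$ for $k\in$ (a subset of $L$ corresponding to) $F_i$, and then combining signs via a single functional of norm $1$ obtained from the $x_i^*$ along disjoint supports (one needs the supports to be far enough apart, which is where the $n\leq F_1$ Schreier condition at level $\xi+1$ is used).

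The complementary case is: for every rational $c>0$ and every $M\in[\mathbb{N}]$ the adequate-family dichotomy gives the small alternative for $\mathcal{F}_c$. I would then, given $M\in[\mathbb{N}]$, run a diagonal argument over a sequence $c_k\downarrow 0$ of rationals: choose nested $M\supseteq L_1\supseteq L_2\supseteq\cdots$ with $\mathcal{F}_{c_k}[L_k]\subseteq\mathcal{S}_\xi$, and diagonalize to $L\in[M]$ with $\mathcal{F}_{c_k}[L']\subseteq\mathcal{S}_\xi$ for all $k$ and all $L'\in[L]$ (using heredity so passing to subsets is harmless). For such $L$, I claim every $P\in[L]$ makes $(x_n)_{n\in\mathbb{N}}$ be $(P,\xi)$-summable: since the Repeated Averages have $\operatorname{supp}\xi_n^P\in\mathcal{S}_\xi[P]$, and $\mathcal{F}_{c_k}[P]\subseteq\mathcal{S}_\xi$ means that whenever a set of indices witnesses level $c_k$ for some $x^*$ it already lies in $\mathcal{S}_\xi$, one deduces — by the now-standard estimate bounding $\norm{\xi_n^P\cdot(x_k)_{k\in\mathbb{N}}}$ by $c_k$ plus the weight the average puts on the ``large'' coordinates — that $\norm{\xi_n^P\cdot(x_k)_{k\in\mathbb{N}}}\to 0$, so in particular the Ces\`aro means of these averages converge to $0$ in norm and $(x_n)_{n\in\mathbb{N}}$ is $(P,\xi)$-summable. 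This is exactly alternative (a). The main obstacle I anticipate is the quantitative norm estimate in this last step — showing that control of $\mathcal{F}_c$ by $\mathcal{S}_\xi$ translates into smallness of the Repeated Average of $(x_n)$ — which requires the fact (provable by induction on $\xi$) that for $F\in\mathcal{S}_\xi$ and any functional, the portion of an $\mathcal{S}_\xi$-admissible average supported where the functional exceeds $c$ is itself governed by membership in $\mathcal{S}_\xi$, so contributes little once $F\notin\mathcal{S}_\xi$ is excluded; threading this together with the limit-ordinal layer of the definition of the Repeated Averages (the $n_j$-truncations) is the delicate part. The rest is bookkeeping with heredity, the spreading property, and diagonalization.
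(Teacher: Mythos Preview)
There is a genuine gap in the ``large'' branch of your argument. You apply the adequate-family dichotomy to $\mathcal{F}_c$ at level $\xi$, obtaining $\mathcal{S}_\xi^L\subseteq\mathcal{F}_c$, and then claim this yields an $\ell_1^{\xi+1}$-spreading model. But $\mathcal{S}_\xi^L\subseteq\mathcal{F}_c$ is exactly the statement that $(x_n)_{n\in\mathbb{N}}$ is $(\xi,c)$-large, and this only produces an $\ell_1^{\xi}$-spreading model (cf.\ Proposition~\ref{Proplarge->sm}), not an $\ell_1^{\xi+1}$-spreading model. Your proposed bridge --- writing $F\in\mathcal{S}_{\xi+1}$ as $\bigcup F_i$ with $F_i\in\mathcal{S}_\xi$, picking for each $i$ a functional $x_i^*\in B_{X^*}$ witnessing $F_i\in\mathcal{F}_c$, and then ``combining'' the $x_i^*$ into a single norm-one functional along disjoint supports --- does not work in a general Banach space: there is no mechanism to glue several unit-ball functionals into one just because the index sets on which they act are disjoint. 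This step would succeed in $c_0$, but the theorem is about arbitrary $X$.

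What the paper (following \cite{ArgyrosMercourakusTsarpalias}) actually does is pass through the Repeated Averages to climb from $\xi$ to $\xi+1$. One assumes (a) fails, so there is $M$ with $\operatorname{cca}(\xi_n^N\cdot(x_k))$ bounded below for every $N\in[M]$; a Ramsey argument then produces, for each $n$, a \emph{single} functional $x^*$ with $x^*(\xi_j^N\cdot(x_k))$ large for $j=1,\dots,n$ simultaneously (this is the content of the proof of Proposition~\ref{Propcca->large}). That uniform control over consecutive blocks is precisely the hypothesis of Lemma~\ref{LemmaVelka2}, which then delivers $\mathcal{S}_{\xi+1}^N\subseteq\mathcal{F}_{c''}$, i.e.\ $(\xi+1,c'')$-largeness; only now does Proposition~\ref{Proplarge->sm} give the $\ell_1^{\xi+1}$-spreading model. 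The jump in ordinal comes from the Repeated-Averages/Ramsey step, not from a direct functional-gluing, and your outline omits this step entirely. Your ``small'' branch (diagonalising $\mathcal{F}_{c_k}[L_k]\subseteq\mathcal{S}_\xi$ and using that $\xi$-averages put vanishing mass on $\mathcal{S}_\xi$-sets) is in the right spirit, but as written the dichotomy you set up does not match the one you need.
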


More precisely, we will prove a formulation of the above-mentioned result for a bounded subset of a Banach space instead of a weakly null sequence and we will add the quantities $\operatorname{wbs}_\xi$ and $\operatorname{wus}_{\xi+1}$ (qualitative version of the quantity $\operatorname{wus}_{\xi+1}$ was also used in the proof of the Theorem \ref{TheoremAMT} from \cite{ArgyrosMercourakusTsarpalias}).

\begin{theorem}\label{TheoremQuantBS}
    Let $A$ be a bounded set in a Banach space $X$ and $\xi < \omega_1$. Then
    \begin{align}\tag{$\star$}\label{star}
        2\operatorname{sm}_{\xi+1} (A) \leq \operatorname{wbs}_\xi (A) \leq \operatorname{wbs}_\xi^s (A) \leq 2 \operatorname{wus}_{\xi+1} (A) \leq 4 \operatorname{sm}_{\xi+1} (A).
    \end{align}
\end{theorem}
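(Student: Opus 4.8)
The plan is to establish the chain \eqref{star} one inequality at a time, proving the four nontrivial estimates separately (the middle inequality $\operatorname{wbs}_\xi(A) \leq \operatorname{wbs}_\xi^s(A)$ is already noted after the definitions). I would organize the work around the single sequence level: for each inequality it suffices to compare the relevant sequence-quantities $\widetilde{\operatorname{cca}}_\xi$, $\widetilde{\operatorname{cca}}_\xi^s$, and the spreading-model/largeness constants for a fixed weakly convergent sequence $(x_n)$ in $A$ and then take suprema. Passing to $(x_n - x)$, I may assume the sequence is weakly null, which is what the machinery of Section \ref{section:Prep} (the family $\mathcal{F}_\delta$, the sets $\mathcal{S}_\xi^M$, Theorem \ref{TheoremAMT}) is built for.

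\emph{Step 1: $2\operatorname{sm}_{\xi+1}(A) \leq \operatorname{wbs}_\xi(A)$.} Suppose $(x_n)$ is weakly null in $A - x$ and generates an $\ell_1^{\xi+1}$-spreading model with constant $c$. Fix any $M \in [\mathbb{N}]$ and look at the Repeated Averages $(\xi_n^M)_n$. Since $\operatorname{supp}\xi_n^M \in \mathcal{S}_\xi[M]$ and is in fact in $\mathcal{S}_\xi$ by the spreading property, a block of $\mathcal{S}_\xi$-supported convex combinations can be amalgamated into an $\mathcal{S}_{\xi+1}$-admissible combination; feeding this into the spreading-model estimate forces each averaged vector $\xi_n^M\cdot(x_k)_k$ to have norm $\geq c$, and — comparing two far-apart Cesàro sums of these averages — I get $\operatorname{cca}(\xi_n^M\cdot(x_k)_k) \geq 2c$ (the factor $2$ from $\|u - v\| \geq 2c$ when $u,v$ are essentially disjoint blocks each of norm $\geq c$, or more carefully from the difference of two Cesàro tails). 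Taking the infimum over $M$ gives $\widetilde{\operatorname{cca}}_\xi((x_n)) \geq 2c$, hence the claim after taking suprema.

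\emph{Step 2: $\operatorname{wbs}_\xi^s(A) \leq 2\operatorname{wus}_{\xi+1}(A)$.} This is the contrapositive direction and the step I expect to be the main obstacle, since it requires a genuine dichotomy argument. Given a weakly null $(x_n)$ in $A-x$ with $\widetilde{\operatorname{cca}}_\xi^s((x_n)) > c$: I would argue that for every $M$ there is $N \in [M]$ along which $(x_n)$ is \emph{not} $(\xi,P)$-summable for any $P \in [N]$ (this is what $\widetilde{\operatorname{cca}}_\xi^s > c$ must yield, via the precise correspondence and a standard stabilization/diagonal argument over $[\mathbb{N}]$ using adequacy of $\mathcal{F}_\delta$ and the Ramsey-type structure of $[\mathbb{N}]$). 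Then alternative (a) of Theorem \ref{TheoremAMT} fails for every such $N$, so alternative (b) holds: some subsequence generates an $\ell_1^{\xi+1}$-spreading model, and — unwinding the proof of Theorem \ref{TheoremAMT} (whose qualitative use of $\operatorname{wus}_{\xi+1}$ is flagged in the excerpt) — the $(\xi+1,c')$-largeness can be extracted with $c' \geq c/2$, controlling the loss of the constant quantitatively. The delicate point is keeping the constants: I must track that the failure of $(\xi,\cdot)$-summability with margin $> c$ produces a functional witnessing $x^*(x_n) \geq c/2$ on an $\mathcal{S}_{\xi+1}^N$-large family, which is where the factor $2$ enters. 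This likely needs a quantitative refinement of the combinatorial heart of \cite[Theorem 2.4.1]{ArgyrosMercourakusTsarpalias}, relating failure of averaging-to-zero along Repeated Averages to largeness of $\mathcal{F}_{c/2}$.

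\emph{Step 3: $2\operatorname{wus}_{\xi+1}(A) \leq 4\operatorname{sm}_{\xi+1}(A)$, i.e. $\operatorname{wus}_{\xi+1}(A) \leq 2\operatorname{sm}_{\xi+1}(A)$.} Suppose $(x_n)$ in $A-x$ is $(\xi+1,c)$-large, so there is $M$ with $\mathcal{S}_{\xi+1}^M \subseteq \mathcal{F}_c((x_n))$. Writing $M = (m_n)$, for any $F \in \mathcal{S}_{\xi+1}$ the set $(m_i)_{i\in F} \in \mathcal{S}_{\xi+1}^M$, so there is $x^* \in B_{X^*}$ with $x^*(x_{m_i}) \geq c$ for all $i \in F$; applying $x^*$ to $\sum_{i\in F}\alpha_i x_{m_i}$ after splitting into positive and negative parts of $(\alpha_i)$ and using the triangle inequality gives $\|\sum_{i\in F}\alpha_i x_{m_i}\| \geq \tfrac{c}{2}\sum_{i\in F}|\alpha_i|$. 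Hence $(x_{m_n})_n$ generates an $\ell_1^{\xi+1}$-spreading model with constant $c/2$, so $\operatorname{sm}_{\xi+1}(A) \geq c/2$, giving the inequality after taking suprema. Finally I would assemble the four estimates into \eqref{star}, noting that Steps 1 and 3 already sandwich $\operatorname{wbs}_\xi^s$ between $2\operatorname{sm}_{\xi+1}$ and $4\operatorname{sm}_{\xi+1}$, so the two outer quantities are comparable up to the constant $2$.
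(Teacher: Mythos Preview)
Your decomposition into three steps matches the paper's (Propositions \ref{PropSM->cca}, \ref{Propcca->large}, \ref{Proplarge->sm}), but Steps 1 and 3 each contain a real gap.

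In Step 3 the argument fails as written. Having chosen $x^*\in B_{X^*}$ with $x^*(x_{m_i})\geq c$ for $i\in F^+$, you have \emph{no} control over $x^*(x_{m_i})$ for $i\in F^-$: those values can be as large as the bound on $(x_n)$, and since $\alpha_i<0$ there, the term $\sum_{i\in F^-}\alpha_i x^*(x_{m_i})$ can cancel the gain from $F^+$ entirely. The paper repairs this via Lemma \ref{LemmaRedukce} (a quantitative form of \cite[Lemma 2.4.8]{ArgyrosMercourakusTsarpalias}): before estimating, one passes to $N\in[M]$ on which every $F\in\mathcal{F}_c[N]$ admits a witness $x^*$ satisfying both $x^*(x_n)\geq(1-\epsilon)c$ on $F$ \emph{and} $\sum_{n\in N\setminus F}|x^*(x_n)|<\epsilon c$. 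The second condition is exactly what absorbs the $F^-$ contribution and yields the constant $c/2$ (up to $\epsilon$).

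In Step 1 the amalgamation is not free: to merge $k$ consecutive supports $F_{l+1}<\cdots<F_{l+k}\in\mathcal{S}_\xi$ into an element of $\mathcal{S}_{\xi+1}$ you need $k\leq\min F_{l+1}$, and for a general $M$ (e.g.\ $M=\mathbb{N}$ with $\xi=0$) this forces the two Ces\`aro indices to stay comparable, which kills the factor $2$. The paper's fix is to replace $(x_n)$ by the subsequence $(x_{n^3})$ (still a sequence in $A$, so legitimate at the level of $\operatorname{wbs}_\xi(A)$) and compare Ces\`aro sums at $n=l^2+l$ and $m=l^3+l$: the $m-l=l^3$ cubed supports involved have minimum at least $(l+1)^3>l^3$, so their union lies in $\mathcal{S}_{\xi+1}$ and the $\ell_1^{\xi+1}$ lower bound applies to the whole signed combination, producing $2c$ in the limit. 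Your parenthetical ``$\|u-v\|\geq 2c$ when $u,v$ are disjoint blocks each of norm $\geq c$'' is not a valid deduction on its own; the $2$ really comes from the spreading-model estimate applied to the full signed sum.
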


As we have already mentioned, the inequality $\operatorname{wbs}_\xi(A) \leq \operatorname{wbs}_\xi^s(A)$ is trivial. Recall that $\mathcal{S}_\xi^M \subseteq \mathcal{S}_\xi [M]$ for all $M \in [\mathbb{N}]$ but in general we do not have equality. We have already noted that the support of $\xi_n^M$ is in the family $\mathcal{S}_\xi [M]$ for every $n \in \mathbb{N}$ and $M \in [\mathbb{N}]$. The following lemma from \cite{ArgyrosMercourakusTsarpalias} will allow us to find an infinite subset of $M$ for which the $\xi$-summability methods are very close to being supported on the sets from the smaller family $\mathcal{S}_\xi^M$. Note that in the following lemma the set $L$ does not depend on $\xi$.

\begin{lemma}\label{LemmaDeltaXi}
    \cite[Proposition 2.1.10.]{ArgyrosMercourakusTsarpalias}
    For every $M \in [\mathbb{N}]$ and $\epsilon > 0$ there is $L \in [M]$ such that for any $P \in [L]$, $\xi < \omega_1$ and $n \in \mathbb{N}$ there is $G \in \mathcal{S}_\xi^M$ such that
    \begin{align*}
        \langle \xi^P_n, G \rangle > 1 - \epsilon.
    \end{align*}
\end{lemma}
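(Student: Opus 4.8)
The plan is to prove the statement by transfinite induction on $\xi$, having chosen $L$ in advance as a sufficiently rapidly growing subsequence of $M$. Write $M=(m_k)_{k\in\mathbb N}$ and, for the subsequence $L=(\ell_k)_{k\in\mathbb N}\in[M]$ to be constructed, let $a_k$ denote the position of $\ell_k$ in $M$, so $\ell_k=m_{a_k}$ and $a_1<a_2<\cdots$. I would fix $L$ so that: (i) $m_{a_1}$ is very large (say $m_{a_1}\ge 4/\epsilon^2$); (ii) $a_k\ge k^2$ for all $k$; (iii) $a_{k+1}\ge m_{a_k}$ for all $k$. Each of (i)--(iii) constrains only finitely many $a_k$ once the earlier ones are chosen, so such an $L$ exists by a greedy recursion; note $\min P\ge\ell_1=m_{a_1}$ for every $P\in[L]$, so every error term below will be $<\epsilon$. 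The inductive assertion, uniform in $P\in[L]$ and $n\in\mathbb N$, is that there is $G\in\mathcal S_\xi^M$ with $G\subseteq\operatorname{supp}\xi^P_n$ and $\langle\xi^P_n,G\rangle>1-\kappa_\xi(P,n)$, where $\kappa_0\equiv 0$, $\kappa_1(P,n)\le 2(\min P)^{-1/2}$ and $\kappa_\xi(P,n)\le C(\min P)^{-1}$ for $\xi\ge 2$, with $C$ an absolute constant. For $\xi=0$ one takes $G=\operatorname{supp}0^P_n=\{p_n\}\in\mathcal S_0^M$. For $\xi=1$, $1^P_n=\frac1{s_n}\sum_{i\in J}e_{p_i}$ with $J$ an interval of length $s_n$; taking $G=\{p_i:i\in I\}$ for a terminal subinterval $I\subseteq J$ of length $s_n-d$ with $d\approx\sqrt{s_n}$, the $M$-positions of these $p_i$ are $\ge a_{\min I}\ge(\min I)^2\ge s_n\ge|I|$ by (ii), so the $M$-position set of $G$ lies in $\mathcal S_1$, i.e. $G\in\mathcal S_1^M$, and the mass discarded is $d/s_n\le 2(\min P)^{-1/2}$.

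For a successor $\xi=\zeta+1$ with $\zeta\ge 1$, recall $\xi^P_n=\frac1{s_n}\sum_{i\in J}\zeta^P_i$ with $J=\{k_n+1,\dots,k_n+s_n\}$, $s_n=\min\operatorname{supp}\zeta^P_{k_n+1}$, and that for $\zeta\ge 1$ the block $\operatorname{supp}\zeta^P_{k_n+1}$ has at least $s_n$ elements. Apply the inductive hypothesis to each $\zeta^P_i$ to get $G_i\in\mathcal S_\zeta^M$, discard the single block $i=k_n+1$, and set $G=\bigcup_{i=k_n+2}^{k_n+s_n}G_i$. Since the discarded block already occupies $\ge s_n$ consecutive positions of $P$, the minimum of $G_{k_n+2}$ is at least the $s_n$-th element of $P$, hence $\ge\ell_{s_n}$, of $M$-position $\ge a_{s_n}\ge s_n>s_n-1=|J|-1$; as the $M$-position sets of the $G_i$ lie in $\mathcal S_\zeta$ and are consecutive, their union lies in $\mathcal S_{\zeta+1}$, so $G\in\mathcal S_{\zeta+1}^M$. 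For the mass, the discarded block costs $1/s_n$ and the retained blocks contribute $\frac1{s_n}\sum_{i\ge k_n+2}\kappa_\zeta(P,i)$; but for $i\ge k_n+2$ the support $\operatorname{supp}\zeta^P_i$ begins past the $s_n$-th element of $P$, so $\min\operatorname{supp}\zeta^P_i\ge\ell_{s_n}\ge a_{s_n}\ge s_n^2$ by (ii), whence each $\kappa_\zeta(P,i)$ is at most $C s_n^{-2}$ (or $2s_n^{-1}$ when $\zeta=1$) and the sum is $\le Cs_n^{-1}$. Thus $\langle\xi^P_n,G\rangle>1-\kappa_\xi(P,n)$ with $\kappa_\xi(P,n)\le \frac1{s_n}+\frac{C}{s_n^2}\le \frac{C}{\min P}$, using $s_n\ge\min P$ and that $\min P$ is large.

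For a limit $\xi=\sup_m\xi_m$, recall $\xi^P_j=[\xi_{n_j^P}]^{P_j}_1$, where $P_1=P$, $P_l=P_{l-1}\setminus\operatorname{supp}[\xi_{n_{l-1}^P}]^{P_{l-1}}_1$, $n_l^P=\min P_l$, and each $\xi_{n_l^P}$ is a successor ordinal with $\xi_{n_l^P}\ge n_l^P\ge\min P$, hence $\ge 3$. Apply the inductive hypothesis at the strictly smaller ordinal $\xi_{n_j^P}$ to the set $P_j$ and index $1$, obtaining $G'\in\mathcal S_{\xi_{n_j^P}}^M$, $G'\subseteq\operatorname{supp}\xi^P_j$, of mass $>1-\kappa_{\xi_{n_j^P}}(P_j,1)$. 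Then discard from $G'$ its first $V$ elements, $V$ being the position of $n_j^P=\min P_j$ in $L$: the new minimum is then at least the $(V+1)$-th element of $P_j$, hence $\ge\ell_{V+1}$, of $M$-position $\ge a_{V+1}\ge m_{a_V}=\ell_V=n_j^P$ by (iii); so, writing $G=(m_i)_{i\in F}$, we have $F\in\mathcal S_{\xi_{n_j^P}}$ with $\min F\ge n_j^P$, and the defining clause of $\mathcal S_\xi$ gives $F\in\mathcal S_\xi$, i.e. $G\in\mathcal S_\xi^M$. Since $V\le n_j^P$ and the first sub-block of $[\xi_{n_j^P}]^{P_j}_1$ has at least $n_j^P$ elements and total mass $(\min P_j)^{-1}$, the discarded portion has mass $\le(\min P_j)^{-1}$, so $\kappa_\xi(P,j)\le\kappa_{\xi_{n_j^P}}(P_j,1)+(\min P_j)^{-1}\le C(\min P)^{-1}$, the constant being chosen so as to leave room for this additive correction.

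The main obstacle is exactly the uniformity of $L$ in $\xi$. A priori one fears that unravelling a repeated average of high order forces many successive discards, so that the accumulated mass loss grows without bound; this is what the growth conditions (ii)--(iii) are designed to prevent. The key point is that any discard performed at a deeper block of an average concerns elements of $P$ lying so far out that its relative cost is negligible compared with the previous level's, which is precisely why the error sums $\sum_{i\ge k_n+2}\kappa_\zeta(P,i)$ are geometrically small and $\kappa_\xi(P,\cdot)=O((\min P)^{-1})$ for every $\xi\ge 2$; since $\min P\ge m_{a_1}$, all these errors are below $\epsilon$. Making the constant $C$ in the bookkeeping large enough to survive both the single initial discard at each successor stage and the single reindexing discard at each limit stage, and checking that those single discards really do suffice to land in $\mathcal S_\xi^M$, is the technical heart of the argument.
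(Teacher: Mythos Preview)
The paper does not prove this lemma at all: it is quoted verbatim from \cite[Proposition~2.1.10]{ArgyrosMercourakusTsarpalias}, and the remark following the proof of Proposition~\ref{Proplarge->sm} states explicitly that Lemmata~\ref{LemmaDeltaXi} and~\ref{LemmaVelka2} ``offer no quantitative improvement, and so are presented here without proof.'' There is therefore no in-paper argument to compare yours with.

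That said, your strategy---fix a rapidly growing $L\in[M]$ once and for all and then run a transfinite induction on $\xi$, at each stage discarding a small initial portion of the support to force the $M$-index set into the right Schreier family---is essentially the approach of the original source. The membership arguments (using conditions (ii) and (iii) to push the $M$-position of $\min G$ above the required threshold) are correct, and the observation that the coefficients of $\xi^P_n$ are non-increasing is what makes the mass of the discarded initial segment controllable.

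The genuine gap is in the bookkeeping of the ``absolute constant $C$''. At the limit stage you obtain
\[
\kappa_\xi(P,j)\le \kappa_{\xi_{n_j^P}}(P_j,1)+(\min P_j)^{-1}\le (C+1)(\min P_j)^{-1},
\]
and for $j=1$ one has $\min P_j=\min P$, so the inductive bound $\kappa_\xi(P,\cdot)\le C(\min P)^{-1}$ fails as stated: the constant picks up an additive $1$ at every limit, and countable ordinals can have arbitrarily long chains of nested limits. What actually saves the argument is not that $C$ stays fixed but that the \emph{denominators} grow so fast along the recursion (each deeper block has minimum at least the square of the previous one, by your condition (ii)) that the additive $1$'s form a convergent series. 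To make this rigorous you must strengthen the inductive invariant: rather than $\kappa_\xi(P,n)\le C/\min P$, track $\kappa_\xi(P,n)$ against the minimum of $\operatorname{supp}\xi^P_n$ and show, for instance, that $\kappa_\xi(P,n)\le \sum_{k\ge r} c/\ell_k$ where $r$ is the $L$-position of that minimum. You correctly identify this uniformity as ``the technical heart of the argument'', but the proposal does not carry it out, and the specific inductive statement you wrote does not close.
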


The following lemma provides sufficient conditions on an adequate family $\mathcal{F}$ so that the family $\mathcal{S}_{\xi+1}^N$, embeds into $\mathcal{F}$ for some $N \in [\mathbb{N}]$.

\begin{lemma}\label{LemmaVelka2}
    \cite[Theorem 2.2.6, Proposition 2.3.6]{ArgyrosMercourakusTsarpalias}
    Let $\mathcal{F}$ be an adequate family, $\xi < \omega_1$ and $\epsilon' > 0$. Suppose that there is $L = (l_n)_{n \in \mathbb{N}} \in [\mathbb{N}]$ satisfying
    \begin{itemize}
        \item For all $n \in \mathbb{N}$ and $N \in [L]$ with $l_n \leq \min N$ there is $F \in \mathcal{F}$ such that $\langle \xi_k^N, F \rangle > \epsilon'$ for $k = 1, \dots, n$.
    \end{itemize}
    Then there is $N \in [L]$ such that $\mathcal{S}_{\xi+1}^N \subseteq \mathcal{F}$.
\end{lemma}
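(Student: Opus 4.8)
This is the combinatorial core of the Argyros--Mercourakis--Tsarpalias theory (where it is \cite[Theorem 2.2.6, Proposition 2.3.6]{ArgyrosMercourakusTsarpalias}), and the plan is to prove it as there: by transfinite induction on $\xi$, following the recursive definitions of the Schreier families and the repeated averages. Since $\xi_k^N \in S_{\ell_1}^+$ gives $\langle \xi_k^N, F \rangle \le 1$, the hypothesis is vacuous unless $\epsilon' < 1$, which I assume. The statement proved by induction is precisely the implication in the lemma: from an adequate $\mathcal{F}$, an $\epsilon' \in (0,1)$ and an $L$ with the stated capture property, one produces $N \in [L]$ with $\mathcal{S}_{\xi+1}^N \subseteq \mathcal{F}$. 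For $\xi = 0$ this is immediate: there $\xi_k^N = e_{n_k}$ for $N = (n_k)_k$, so $\langle \xi_k^N, F \rangle > \epsilon'$ just means $n_k \in F$; given $G = \{l_{i_1} < \cdots < l_{i_m}\} \in \mathcal{S}_1^L$ (so $m \le i_1$) one applies the hypothesis with $n = m$ and $N = \{l_{i_1}, l_{i_2}, \dots\} \in [L]$, whose admissibility requirement $l_m \le l_{i_1}$ is exactly $m \le i_1$, obtaining $F \in \mathcal{F}$ with $\{l_{i_1}, \dots, l_{i_m}\} \subseteq F$ and hence $G \in \mathcal{F}$ by heredity. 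So $N = L$ works.

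The successor step $\xi = \zeta + 1$ is the heart of the matter. Here $(\zeta+1)_k^N = \frac{1}{s_k} \sum_{i \in I_k} \zeta_i^N$ is a genuine convex combination of $s_k = |I_k|$ consecutive $\zeta$-averages of $N$, with $\operatorname{supp}(\zeta+1)_k^N = \bigcup_{i \in I_k} \operatorname{supp}\zeta_i^N$. Consequently $\langle (\zeta+1)_k^N, F \rangle > \epsilon'$, together with $0 \le \langle \zeta_i^N, F \rangle \le 1$, forces by a Markov-type count that $\langle \zeta_i^N, F \rangle > \epsilon'/2$ for more than $\frac{\epsilon'}{2}|I_k|$ of the indices $i \in I_k$. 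A Ramsey stabilization over $[L]$ --- available because $\mathcal{F}$ is precompact, so its closure in $\{0,1\}^{\mathbb{N}}$ is controlled --- lets one pass to $N \in [L]$ along which this capture pattern is uniform; Lemma \ref{LemmaDeltaXi} is then invoked to replace each support $\operatorname{supp}\zeta_i^N \in \mathcal{S}_\zeta[N]$ by a genuine $\mathcal{S}_\zeta^M$-set, at the cost of an arbitrarily small loss of mass. What emerges is, for $\mathcal{F}$ and the parameter $\epsilon'/2$, a capture hypothesis of exactly the same form but for $\zeta$ in place of $\zeta+1$; the induction hypothesis then supplies, inside each sufficiently long run of $\zeta$-averages, a copy of $\mathcal{S}_{\zeta+1}$ sitting in $\mathcal{F}$, and the uniform bound ``for $k = 1, \dots, n$'' is precisely what permits concatenating $n$ such copies over an $\mathcal{S}_{\zeta+1}$-admissible choice of blocks, producing a copy of $\mathcal{S}_{\zeta+2}^N = \mathcal{S}_{\xi+1}^N$ inside $\mathcal{F}$.

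For $\xi$ a limit ordinal one argues in the same spirit, now exploiting that each $\xi$-average appearing in the construction is, by definition, the first $\xi_m$-average of a suitable tail, $m$ being the minimum of that tail and $\xi_m < \xi$ a successor from the fixed increasing sequence. After passing to a tail, the limit-$\xi$ hypothesis descends, for each fixed $m$, to a hypothesis of the required form for the ordinal $\xi_m$, so the induction hypothesis yields a decreasing chain $L \supseteq M_1 \supseteq M_2 \supseteq \cdots$ with $\mathcal{S}_{\xi_m+1}^{M_m} \subseteq \mathcal{F}$; one then takes the diagonal set $N$ whose $m$-th element lies in $M_m$ and checks, using the limit definition of $\mathcal{S}_\xi$ together with the spreading property, that every element of $\mathcal{S}_{\xi+1}^N$ with $m$ top-level blocks already lies in $\mathcal{S}_{\xi_m+1}^{M_m}$, hence in $\mathcal{F}$.

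The main obstacle is the successor step, and within it the uniform bookkeeping. One must keep the capture constant from degenerating through the recursion --- it merely halves at each successor level, which is harmless for a single step but must be arranged so as not to accumulate fatally as the induction runs through a limit ordinal --- and one must carry the admissibility constraint ``$l_n \le \min N$'' throughout, since it is exactly what guarantees that the blocks extracted one level down can be glued into a genuine member of the next Schreier family rather than only into a member of its trace $\mathcal{S}_{\xi+1}[N]$. Verifying that the limit-$\xi$ hypothesis genuinely restricts to a hypothesis about the ordinals $\xi_m$, and that the Schreier families line up correctly under the diagonalization, is the other point requiring care; the Markov counts, the Ramsey passages to subsets, and the heredity reductions are otherwise routine.
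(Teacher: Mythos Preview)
The paper does not prove this lemma: it is quoted from \cite{ArgyrosMercourakusTsarpalias} and, as the remark following Proposition~\ref{Proplarge->sm} makes explicit, ``presented here without proof.'' The only comment the paper adds is that replacing the admissibility condition $n \le \min N$ (as in \cite[Proposition 2.3.6]{ArgyrosMercourakusTsarpalias}) by $l_n \le \min N$ does not affect the argument. Your outline follows the transfinite induction of \cite{ArgyrosMercourakusTsarpalias}, which is exactly the route the paper defers to; since the paper contains no proof of its own, there is nothing further to compare against.
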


Note that \cite[Proposition 2.3.6.]{ArgyrosMercourakusTsarpalias} has a slightly different formulation than Lemma \ref{LemmaVelka2}. More specifically, the condition on $L$ is formulated for all $N \in [L]$ with $n \leq \min N$ instead of $l_n \leq \min N$. However, the proof of \cite[Proposition 2.3.6.]{ArgyrosMercourakusTsarpalias} works for our formulation as well.

The following lemma is a quantified version of \cite[Lemma 2.4.8]{ArgyrosMercourakusTsarpalias}.

\begin{lemma}\label{LemmaRedukce}
    Let $(x_n)_{n \in \mathbb{N}}$ be a weakly null sequence in $B_X$, $\delta > 0$ and $\epsilon \in (0,1)$. Then for every $M \in [\mathbb{N}]$ there is $N \in [M]$ satisfying the following property:
    \begin{itemize}
        \item If $(a_n)_{n \in \mathbb{N}} \in S_{\ell_1}$, $\operatorname{supp} ((a_n)_{n \in \mathbb{N}}) \subseteq N$ and $F \in \mathcal{F}_{\delta}$, then
        $$ \norm{\sum_{n \in \mathbb{N}} a_n x_n} \geq (1-\epsilon) \: \delta \cdot \langle (a_n)_{n \in \mathbb{N}}, F \rangle - \epsilon \delta.$$
    \end{itemize}
\end{lemma}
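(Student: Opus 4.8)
The plan is to prove this by a diagonal/stabilization argument that, given $M \in [\mathbb{N}]$, produces $N \in [M]$ along which the functionals witnessing membership in $\mathcal{F}_\delta$ act almost like the coordinate functionals on the block sequence. The basic idea: if $F \in \mathcal{F}_\delta$ then there is $x^* \in B_{X^*}$ with $x^*(x_n) \geq \delta$ for all $n \in F$; applying $x^*$ to $\sum_n a_n x_n$ gives $\norm{\sum a_n x_n} \geq x^*(\sum a_n x_n) = \sum_{n \in \mathbb{N}} a_n x^*(x_n)$, and on $F$ the coefficients $x^*(x_n)$ are at least $\delta$, so the $F$-part of this sum is at least $\delta \langle (a_n), F\rangle$ provided the relevant $a_n$ are non-negative — but in general they need not be, and the off-$F$ part and the negative coefficients on $F$ must be controlled. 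That control is exactly what weak nullity buys us after passing to a subsequence.

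First I would reduce to the case where $(a_n)$ is finitely supported, since $S_{\ell_1}$-vectors are approximated in $\ell_1$-norm by finitely supported ones and both sides of the inequality depend continuously on $(a_n)$ in that norm (boundedness of $(x_n)$ in $B_X$ makes $\norm{\sum a_n x_n}$ $1$-Lipschitz in the $\ell_1$-norm of the coefficients, and $\langle (a_n), F\rangle$ is trivially $1$-Lipschitz). Then I would build $N = (n_k)$ by induction: having chosen $n_1 < \dots < n_{k-1}$, use weak nullity of $(x_j)$ to pick $n_k > n_{k-1}$ large enough that every functional in $B_{X^*}$ that could be ``relevant'' at stage $k$ has small action on $x_{n_k}$ — the trouble is that $B_{X^*}$ is not separable, so there is no single countable list of functionals to diagonalize against. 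The standard way around this, and the one I expect to use, is to pass through the weakly compact set $D = \{(x^*(x_n))_n : x^* \in B_{X^*}\} \subseteq c_0$ introduced in the excerpt: weak compactness of $D$ in $c_0$ gives, for each $\eta>0$, an integer $p$ such that for every $f \in D$ we have $\#\{n : |f(n)| \geq \eta\} \leq p$ — this is precisely uniform weak nullity of $(x_n)$ restricted to suitable pieces. More carefully, since $D$ is weakly compact in $c_0$ it is, for each $\eta > 0$, contained in a set of sequences eventually bounded by $\eta$ in a uniform way; one extracts from this a ``tail smallness'' that is uniform over all of $B_{X^*}$ after an initial segment, and that is what makes the off-$F$ contribution $\sum_{n \notin F} a_n x^*(x_n)$ small relative to $\|(a_n)\|_1 = 1$.

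The main technical step, and the one I expect to be the principal obstacle, is organizing the error accounting so that a single choice of $N$ works simultaneously for \emph{all} $(a_n) \in S_{\ell_1}$ supported on $N$ and \emph{all} $F \in \mathcal{F}_\delta$, with total error bounded by $\epsilon \delta \langle (a_n), F\rangle + \epsilon\delta$. Concretely, fixing $x^*$ with $x^*(x_n) \geq \delta$ on $F$, I would write $\sum_n a_n x^*(x_n) = \sum_{n \in F} a_n x^*(x_n) + \sum_{n \notin F} a_n x^*(x_n)$; bound the second sum in absolute value by $\|(a_n)\|_1$ times the uniform tail-supremum of $|x^*(x_k)|$ over $k$ in the (large) support of $(a_n)$, which by construction of $N$ is at most (say) $\epsilon\delta$; and in the first sum split $F$-coordinates into those with $a_n \geq 0$ (which contribute at least $\delta \sum_{n \in F, a_n \geq 0} a_n \geq \delta \langle (a_n), F\rangle$) and those with $a_n < 0$ (whose contribution is negative but, since $x^*(x_n) \leq 1$ and these $n$ lie in $N$, bounded below by $-\sum_{n\in F, a_n<0}|a_n|$, which one again absorbs using the weak-null tail estimate because only finitely many of these can have $|x^*(x_n)|$ large, and the rest are small). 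Keeping these two sources of error — ``negative coefficients on $F$'' and ``coefficients off $F$'' — each below $\tfrac{\epsilon}{2}\delta(\langle (a_n),F\rangle + 1)$ or so, by choosing the gaps in $N$ to grow fast enough relative to the uniform weak-nullity moduli coming from $D$, yields the claimed bound; the delicate point is that $F$ and $(a_n)$ are chosen adversarially \emph{after} $N$, so the uniformity over $B_{X^*}$ provided by weak compactness of $D$ is essential and must be invoked in the right order during the inductive construction.
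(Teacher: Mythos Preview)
Your approach has a genuine gap: the claim that weak compactness of $D = \{(x^*(x_n))_n : x^* \in B_{X^*}\}$ in $c_0$ yields, for each $\eta > 0$, an integer $p$ with $\#\{n : |f(n)| \geq \eta\} \leq p$ for all $f \in D$, is false. That conclusion is exactly uniform weak nullity of $(x_n)$, which is strictly stronger than weak nullity and fails in precisely the cases the lemma is designed for. For instance, if $(x_n) = (e_n)$ is the basis of the Schreier space $X_1$, then for any $k$ the functional $x^* = e_{n_1}^* + \cdots + e_{n_k}^*$ (with $\{n_1,\dots,n_k\} \in \mathcal{S}_1$) lies in $B_{X_1^*}$ and satisfies $x^*(e_{n_j}) = 1$ for all $j$; so no uniform $p$ exists. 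Consequently your plan to bound $\sum_{n \notin F} a_n x^*(x_n)$ by an $\ell_\infty$-tail estimate uniform over $B_{X^*}$ cannot work, and neither can the companion estimate for the negative-coefficient part of $F$.

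The paper's proof avoids this by changing both the functional and the set. Given $(a_n)$ and $F$, it replaces $F$ by $F' = \{n \in F \cap N : a_n > 0\} \in \mathcal{F}_\delta[N]$, which eliminates the ``negative coefficients on $F$'' issue entirely (and only decreases the right-hand side, since $\langle (a_n), F'\rangle \geq \langle (a_n), F\rangle$). Then, rather than using the original witness for $F$, it invokes \cite[Lemma~2.4.7]{ArgyrosMercourakusTsarpalias}: after passing to a suitable $N \in [M]$, every $F' \in \mathcal{F}_\delta[N]$ admits some $x^* \in B_{X^*}$ with $x^*(x_n) \geq (1-\epsilon)\delta$ on $F'$ and $\sum_{n \in N \setminus F'} |x^*(x_n)| < \epsilon\delta$. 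The crucial differences from your outline are that this $x^*$ depends on $F'$ (hence on $(a_n)$), and that the off-$F'$ control is an $\ell_1$-bound on the values $(x^*(x_n))_{n \in N \setminus F'}$, not an $\ell_\infty$-bound; combined with $|a_n| \leq 1$ this gives $\big|\sum_{n \in N \setminus F'} a_n x^*(x_n)\big| < \epsilon\delta$ directly. Your inductive construction cannot produce $N$ with this property because you are trying to control all witnessing functionals simultaneously; the black-box lemma from \cite{ArgyrosMercourakusTsarpalias} is doing real combinatorial work here that your sketch does not replace.
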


\begin{proof}
    We use \cite[Lemma 2.4.7]{ArgyrosMercourakusTsarpalias} to find $N \in [M]$ such that for each $F \in \mathcal{F}_\delta [N]$ there is $x^* \in B_{X^*}$ such that
    \begin{enumerate}[(a)]
        \item $x^*(x_n) \geq (1-\epsilon)\delta$ for all $n \in F$,
        \item $\sum_{n \in N \setminus F} |x^*(x_n)| < \epsilon \delta$.
    \end{enumerate}
    Fix $(a_n)_{n \in \mathbb{N}} \in S_{\ell_1}$ with $\operatorname{supp} ((a_n)_{n \in \mathbb{N}}) \subseteq N$ and $F \in \mathcal{F}_{\delta}$. Take
    \begin{align*}
        F' = \{n \in F \cap N: \; a_n > 0\} \in \mathcal{F}_\delta [N].
    \end{align*}
    We find $x^* \in B_{X^*}$ such that the properties (a), (b) are satisfied for $F'$. Then
    \begin{align*}
        \norm{\sum_{n \in \mathbb{N}} a_n x_n} \geq \sum_{n \in N} a_n x^*(x_n) &\geq \sum_{n \in F'} a_n x^*(x_n) - \sum_{n \in N \setminus F'} |a_n x^*(x_n)| \\
        &\geq (1 - \epsilon) \delta \cdot \langle (a_n)_{n \in \mathbb{N}}, F' \rangle  - \epsilon \delta \\
        &\geq (1 - \epsilon) \delta \cdot \langle (a_n)_{n \in \mathbb{N}}, F \rangle  - \epsilon \delta.
    \end{align*}
    The inequality $\sum_{n \in N \setminus F'} |a_n x^*(x_n)| \leq \epsilon \delta$ follows from (b) and the fact that $|a_n| \leq 1$ for each $n \in \mathbb{N}$. The last inequality holds as $a_n \leq 0$ on $F \setminus F'$.
\end{proof}

We are all prepared to prove the first inequality of (\ref{star}). We will use the natural generalization of the idea of \cite[Lemma 4.5.]{Bendov__2015}.

\begin{prop}\label{PropSM->cca}
    Let $(x_n)_{n \in \mathbb{N}}$ be a sequence in a Banach space $X$ which weakly converges to some $x$. If $(x_n-x)_{n \in \mathbb{N}}$ generates an $\ell_1^{\xi+1}$-spreading model with a constant $c$, then
    \begin{align*}
        \widetilde{\operatorname{cca}}_\xi((x_{n^3})_{n \in \mathbb{N}}) \geq 2c.
    \end{align*}
\end{prop}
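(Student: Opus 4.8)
We may assume $x = 0$ (translate the sequence) and, after replacing $(x_n)$ by $(x_n/\|x_n\|)$-type normalisation if needed, think of $(x_n)$ as a weakly null sequence generating an $\ell_1^{\xi+1}$-spreading model with constant $c$. The strategy is to show that for \emph{every} $M \in [\mathbb{N}]$ the Cesàro sums of the repeated averages $\bigl(\xi^M_n \cdot (x_{k^3})_{k \in \mathbb{N}}\bigr)_{n}$ stay $2c$-separated asymptotically, i.e. $\operatorname{cca}\bigl(\xi^M_n \cdot (x_{k^3})_{k \in \mathbb{N}}\bigr) \geq 2c$; taking the infimum over $M$ then gives $\widetilde{\operatorname{cca}}_\xi((x_{n^3})_{n}) \geq 2c$.

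The cubing trick is the device from \cite[Lemma 4.5]{Bendov__2015}: fix $M \in [\mathbb{N}]$. The summability method $\xi^M_n$ has support in $\mathcal{S}_\xi[M]$, so $\operatorname{supp}\xi^M_n \in \mathcal{S}_\xi$. The key combinatorial fact to exploit is that if we apply $\xi^M_n$ to the \emph{re-indexed} sequence $(x_{k^3})_k$, then the relevant supports, once pulled back through $k \mapsto k^3$, are sets $F$ of the form $\{j_1^3 < \cdots < j_r^3\}$ where $\{j_1,\ldots,j_r\} \in \mathcal{S}_\xi$; the point of cubing is that one can then split such a set (or two consecutive such sets coming from $\xi^M_n$ and $\xi^M_m$ with $n < m$) so as to manufacture a single set lying in $\mathcal{S}_{\xi+1}$ whose associated convex combination of the $x_j$ has $\ell_1$-mass close to $2$. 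Concretely, I would take $n$ large, consider two far-apart indices $n < m$, look at $y_n := \xi^M_n \cdot (x_{k^3})_k$ and $y_m := \xi^M_m \cdot (x_{k^3})_k$, and estimate $\|\tfrac1N\sum_{i=n+1}^{n+N} y_i - \tfrac1N\sum_{i=m+1}^{m+N} y_i\|$ from below. Each $\tfrac1N\sum y_i$ is itself a convex combination $\sum_j a_j x_j$ with $\sum |a_j| = 1$ and support (after removing the cubes) a union of $N$ sets from $\mathcal{S}_\xi$, each preceded by its min; for $N \leq \min$ this union lies in $\mathcal{S}_{\xi+1}$. Hence the \emph{difference} $\sum_j b_j x_j$ has $\sum|b_j| = 2$ (disjoint supports, by taking $m$ large relative to the support of the first block) and support in $\mathcal{S}_{\xi+1}$; the $\ell_1^{\xi+1}$-spreading model inequality then gives $\|\sum_j b_j x_j\| \geq c \cdot 2 = 2c$. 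Letting $n \to \infty$ and then $N \to \infty$ and $m \to \infty$ appropriately, $\operatorname{cca}\bigl(\xi^M_n \cdot (x_{k^3})_k\bigr) \geq 2c$.

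The main obstacle is the bookkeeping that makes the union of the blocks land in $\mathcal{S}_{\xi+1}$ rather than merely in $\mathcal{S}_\xi[M]$ or in some trace family: this is exactly why one cubes the indices. The Cesàro average of $N$ repeated-average vectors $\xi^M_i \cdot (x_{k^3})_k$ for $i$ in a window of length $N$ has support equal to a union of $N$ sets each of which, after pulling back through $k \mapsto k^3$, is in $\mathcal{S}_\xi$; the cube map is so sparse that $\min$ of the pulled-back set dwarfs $N$ once $i$ is moderately large, so the defining condition $N \leq F_1 < \cdots < F_N$ of $\mathcal{S}_{\xi+1}$ is met, and by the spreading property of $\mathcal{S}_{\xi+1}$ the pushed-forward union is still admissible. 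One must also be careful that the difference of the two averaged blocks really has disjoint supports (choose $m$ so that $\operatorname{supp}\xi^M_{m+1}$ starts past the whole support of the first averaged block) and that the total $\ell_1$ norm of the difference is exactly $2$, not merely at most $2$; this uses that each averaged block is a convex combination of unit-$\ell_1$-norm vectors with positive, disjointly supported coefficients. Once the support is in $\mathcal{S}_{\xi+1}$ and the $\ell_1$-mass is $2$, the spreading-model estimate finishes the proof; passing to the infimum over $M$ is immediate.
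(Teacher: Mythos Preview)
Your overall strategy---reduce to $x=0$, fix an arbitrary $M\in[\mathbb{N}]$, and use the map $k\mapsto k^3$ so that a suitable union of $\mathcal{S}_\xi$-supports becomes admissible for $\mathcal{S}_{\xi+1}$, then apply the spreading-model lower bound---is exactly the right framework and matches the paper. The gap is in \emph{which} quantity you estimate. You propose to bound
\[
\Bigl\|\tfrac{1}{N}\sum_{i=n+1}^{n+N} y_i \;-\; \tfrac{1}{N}\sum_{i=m+1}^{m+N} y_i\Bigr\|,\qquad y_i=\xi^M_i\cdot(x_{k^3})_k,
\]
from below by $2c$. But $\operatorname{cca}(y_i)$ is the oscillation of the \emph{Ces\`aro means} $u_p=\tfrac{1}{p}\sum_{i=1}^p y_i$, not of such windowed averages, and you never explain the passage from one to the other. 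The natural bridge $\tfrac{1}{N}\sum_{i=q+1}^{q+N}y_i=u_{q+N}+\tfrac{q}{N}(u_{q+N}-u_q)$ shows you would need $N\gg m$ to make the second windowed average close to $u_{m+N}$; yet disjointness of the two windows forces $m>n+N$, hence $m/N>1$, so this approximation fails. Meanwhile the $\mathcal{S}_{\xi+1}$-constraint caps $2N$ by roughly $(n+1)^3$. As written, your plan does not yield a lower bound on $\|u_p-u_q\|$ of size $2c$.

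The paper sidesteps this by working \emph{directly} with a difference of two genuine Ces\`aro means and letting them overlap. For $l\in\mathbb{N}$ set $n=l^2+l$, $m=l^3+l$, $z_j=\xi^P_j\cdot(x_{k^3})_k$, and write
\[
\frac{1}{m}\sum_{j=1}^m z_j-\frac{1}{n}\sum_{j=1}^n z_j
=\Bigl(\tfrac{1}{m}-\tfrac{1}{n}\Bigr)\sum_{j=1}^{l}z_j+\Bigl(\tfrac{1}{m}-\tfrac{1}{n}\Bigr)\sum_{j=l+1}^{n}z_j+\tfrac{1}{m}\sum_{j=n+1}^{m}z_j.
\]
The first block has norm $O(l/n)\to 0$. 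The last two blocks together form a single combination $\sum_{k\in F}a_k x_{k^3}$ whose cubed support is a union of $m-l=l^3$ sets from $\mathcal{S}_\xi$ with minimum at least $(l+1)^3>l^3$, hence lies in $\mathcal{S}_{\xi+1}$; and the total $\ell_1$-mass of the $a_k$ is $(n-l)\bigl(\tfrac{1}{n}-\tfrac{1}{m}\bigr)+(m-n)\tfrac{1}{m}\to 2$. The spreading-model inequality then gives the bound $2c$. The point your plan misses is that the overlap of the two Ces\`aro sums supplies the sign change (negative coefficients on $[l+1,n]$, positive on $(n,m]$) and hence the factor $2$ \emph{within a single $\mathcal{S}_{\xi+1}$-admissible block}, rather than from two disjoint windows.
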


\begin{proof}
    We may without loss of generality assume that $(x_n)_{n \in \mathbb{N}} \subseteq B_X$ and that $x=0$. We will show that for every $P \in [\mathbb{N}]$ we have $\operatorname{cca} \left( \xi_n^P \cdot (x_{k^3})_{k \in \mathbb{N}} \right) \geq 2c$, and thus $\widetilde{\operatorname{cca}}_\xi((x_{n^3})_{n \in \mathbb{N}}) \geq 2c$.
    
    Take $l \in \mathbb{N}$ and set $n = l^2 + l$ and $m = l^3 +l$. For the sake of brevity we will write $z_j = \xi_j^P \cdot (x_{k^3})_{k \in \mathbb{N}}$. Then
    \begin{align*}
        \norm{\frac{1}{m} \sum_{j=1}^m z_j - \frac{1}{n} \sum_{j=1}^n z_j} = \norm{ \left( \frac{1}{m}- \frac{1}{n} \right) \sum_{j=1}^l z_j + \left( \frac{1}{m}- \frac{1}{n} \right) \sum_{j=l+1}^n z_j + \frac{1}{m} \sum_{j=n+1}^m z_j} \\
        \geq \norm{\left( \frac{1}{m}- \frac{1}{n} \right) \sum_{j=l+1}^n z_j + \frac{1}{m} \sum_{j=n+1}^m z_j} - \norm{ \left( \frac{1}{m}- \frac{1}{n} \right) \sum_{j=1}^l z_j}.
    \end{align*}
    It follows from the triangle inequality that
    \begin{align*}
        \norm{ \left( \frac{1}{m}- \frac{1}{n} \right) \sum_{j=1}^l z_j} \leq l \cdot \left( \frac{1}{n} - \frac{1}{m} \right) = \frac{l^3-l^2}{(l^2+l)(l^3+l)} \overset{l \rightarrow \infty}{\longrightarrow} 0.
    \end{align*}
    For $j \in \mathbb{N}$ set $\xi_j^P = (b^j_k)_{k \in \mathbb{N}}$ and $F_j = \operatorname{supp} \xi_j^P$. We define $F = \bigcup_{j=l+1}^m F_j$ and the finite sequence $(a_k)_{k \in F}$ by
    \begin{align*}
        a_k = \begin{cases}
                \left(\frac{1}{m} - \frac{1}{n} \right) b_k^j \; &\dots \;  \text{if } k \in F_j \text{ for } l+1 \leq j \leq n,\\
                \frac{1}{m} b_k^j \; &\dots \;  \text{if } k \in F_j \text{ for } n+1 \leq j \leq m.
            \end{cases}
    \end{align*}
    Then
    \begin{align*}
        \left( \frac{1}{m}- \frac{1}{n} \right) \sum_{j=l+1}^n z_j + \frac{1}{m} \sum_{j=n+1}^m z_j = \sum_{k \in F} a_k x_{k}.
    \end{align*}
    We have already observed that the sets $F_j$'s belong to the family $\mathcal{S}_\xi$. Hence, the sets $G_j = \{k^3: \; k \in F_j\}$ are also in the family $\mathcal{S}_\xi$ by its spreading property, and the set $G = \bigcup_{k=l+1}^m G_j$ is in $\mathcal{S}_{\xi+1}$ as $\min G_{l+1} \geq (l+1)^3 > l^3 = m-l$. Hence,
    \begin{align*}
        \norm{\sum_{k \in F} a_k x_{k^3}} = \norm{\sum_{k \in G} a_{\sqrt[3]{k}} x_k} \geq c \sum_{k \in G} |a_{\sqrt[3]{k}}| = c \sum_{k \in F} |a_k|
    \end{align*}
    as $(x_k)_{k \in \mathbb{N}}$ is an $\ell_1^{\xi+1}$-spreading model with constant $c$. Therefore we have
    \begin{align*}
        \norm{\left( \frac{1}{m}- \frac{1}{n} \right) \sum_{j=l+1}^n z_j + \frac{1}{m} \sum_{j=n+1}^m z_j} = \norm{\sum_{k \in F} a_k x_{k}} \\
        \geq c \sum_{k \in F} |a_k| = c \left( \sum_{j=l+1}^n \left( \frac{1}{n} - \frac{1}{m} \right) + \sum_{j=n+1}^m \frac{1}{m} \right) \\
        = c \left( \frac{(l^3-l^2)l^2}{(l^2+l)(l^3+l)} + \frac{l^3-l^2}{l^3+l} \right) \overset{l \rightarrow \infty}{\longrightarrow} 2 c.
    \end{align*}
    Hence,
    \begin{align*}
        \liminf_{l \rightarrow \infty} \norm{\frac{1}{m} \sum_{j=1}^m z_j - \frac{1}{n} \sum_{j=1}^n z_j}  \geq 2 c.
    \end{align*}
    It follows that $\operatorname{cca}(z_n) = \operatorname{cca} \left( \xi_n^P \cdot (x_{k^3})_{k \in \mathbb{N}} \right) \geq 2c$. Since $P \in [\mathbb{N}]$ was chosen arbitrarily, we get $\widetilde{\operatorname{cca}}_\xi((x_{n^3})_{n \in \mathbb{N}}) \geq 2c$.
\end{proof}

A version of the preceding proposition can be also shown using the approach of \cite{ArgyrosMercourakusTsarpalias}. However, the best result we were able to get using this approach was $\widetilde{\operatorname{cca}}_\xi^s((x_n)_{n \in \mathbb{N}}) \geq c$. The approach of \cite{Bendov__2015} is more elementary and gives a better constant. Note that the first inequality of (\ref{star}) from Theorem \ref{TheoremQuantBS} is an immediate consequence of Proposition \ref{PropSM->cca}. We proceed with proving the third inequality, using the approach of \cite{ArgyrosMercourakusTsarpalias}.

\begin{prop}\label{Propcca->large}
    Let $(x_n)_{n \in \mathbb{N}}$ be a sequence in a Banach space $X$ which weakly converges to some $x \in X$ and let $c>0$ and $\xi<\omega_1$. Suppose that $\widetilde{\operatorname{cca}}_\xi^s ((x_n)_{n \in \mathbb{N}}) > c$. Then $(x_n)_{n \in \mathbb{N}}$ is $(\xi+1, \frac{c}{2})$-large.
\end{prop}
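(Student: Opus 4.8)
The plan is to pass through a subsequence argument that witnesses the hypothesis $\widetilde{\operatorname{cca}}_\xi^s((x_n)) > c$, convert the resulting lower bound on Cesàro differences of the $\xi$-averages into the ``largeness'' condition of Lemma~\ref{LemmaVelka2} applied to the adequate family $\mathcal{F}_{c/2}((x_n - x))$, and then invoke that lemma to obtain the desired $N$ with $\mathcal{S}_{\xi+1}^N \subseteq \mathcal{F}_{c/2}$. We may assume $x = 0$ and $(x_n) \subseteq B_X$. Since $\widetilde{\operatorname{cca}}_\xi^s((x_n)) > c$, there is $M_0 \in [\mathbb{N}]$ such that $\inf_{N \in [M_0]} \operatorname{cca}(\xi_n^N \cdot (x_k)_k) > c$; that is, for every $N \in [M_0]$ we have $\operatorname{cca}(\xi_n^N \cdot (x_k)_k) > c$. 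First I would apply Lemma~\ref{LemmaDeltaXi} (with a small $\epsilon$ to be fixed) to get $L \in [M_0]$ such that for every $P \in [L]$, every $\zeta < \omega_1$ and every $n$, the average $\zeta_n^P$ puts mass $> 1-\epsilon$ on some set in $\mathcal{S}_\zeta^{M_0}$ — this will let us later replace $\mathcal{S}_\xi[M]$-supports by $\mathcal{S}_\xi^{M_0}$-supports; and also apply Lemma~\ref{LemmaRedukce} (with $\delta$ to be chosen as roughly $c/2$ and $\epsilon$ small) to shrink $L$ further so that on sequences supported in $L$ the norm of $\sum a_n x_n$ dominates $(1-\epsilon)\delta\langle (a_n), F\rangle - \epsilon\delta$ for every $F \in \mathcal{F}_\delta$.

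The core computation is to extract, from $\operatorname{cca}(\xi_n^N \cdot (x_k)_k) > c$, a single functional (equivalently a set $F \in \mathcal{F}_{c/2}$) that ``sees'' many consecutive $\xi$-averages. Fix $N \in [L]$ with $l_n \leq \min N$ (in the notation of Lemma~\ref{LemmaVelka2}, where $L = (l_n)$), and fix $n$. From $\operatorname{cca} > c$ there exist indices $p < q$ with $\norm{\frac1q \sum_{j=1}^q z_j - \frac1p \sum_{j=1}^p z_j} > c$, where $z_j = \xi_j^N \cdot (x_k)_k$; moreover, by first discarding an initial segment we may take $p$ as large as we like, so in particular $p \geq n$ and we may arrange that $\sum_{j=1}^p z_j$ is a genuine convex combination of the $z_j$ with $j$ in a block $\{p_0+1, \dots, q\}$ whose averages $\xi_j^N$ all have support with minimum at least $l_n$ — this block sits ``above'' the initial $n$ averages only in the bookkeeping sense; what matters is that the combined vector $w = \frac1q\sum_1^q z_j - \frac1p\sum_1^p z_j$ can be written as $\sum_{k} a_k x_k$ with $\sum_k |a_k|$ bounded (by $2$, say) and $\norm{w} > c$. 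Picking $x^* \in B_{X^*}$ with $x^*(w) > c$ and using that $w = \sum a_k x_k$ with $\sum |a_k| \leq 2$ and $\operatorname{supp}(a) \subseteq N \subseteq L$, a standard splitting of the index set into $\{x^*(x_k) \geq c/2\}$ and its complement (combined with the estimate from Lemma~\ref{LemmaRedukce} to control the tail) forces the set $F = \{k : x^*(x_k) \geq c/2\}$ — which lies in $\mathcal{F}_{c/2}$ — to carry enough of the $\ell_1$-mass of $(a_k)$ that it meets the supports of the first $n$ of our averages with weight $> \epsilon'$, where $\epsilon' > 0$ depends only on $c$ and the fixed small parameters. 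Here the choice of cube-indices / block structure mirrors the arithmetic bookkeeping in Proposition~\ref{PropSM->cca}, and one should use Lemma~\ref{LemmaDeltaXi} to know the averages $\xi_k^N$ are essentially supported on $\mathcal{S}_\xi^{M_0}$-sets so that the largeness of $F$ on them is not destroyed by a diffuse tail.

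Having produced, for each $n$ and each admissible $N \in [L]$, a set $F \in \mathcal{F}_{c/2}$ with $\langle \xi_k^N, F\rangle > \epsilon'$ for $k = 1, \dots, n$, the hypothesis of Lemma~\ref{LemmaVelka2} is met for the adequate family $\mathcal{F} = \mathcal{F}_{c/2}((x_n)_n)$, the ordinal $\xi$, the constant $\epsilon'$, and the set $L$. The lemma then yields $N \in [L]$ with $\mathcal{S}_{\xi+1}^N \subseteq \mathcal{F}_{c/2}((x_n)_n)$, which is exactly the assertion that $(x_n)_n$ is $(\xi+1, \tfrac{c}{2})$-large. I expect the main obstacle to be the middle step: massaging the raw inequality $\operatorname{cca}(\xi_n^N \cdot (x_k)_k) > c$ — which only says \emph{some} two Cesàro means of the $\xi$-averages differ by more than $c$ — into a uniform statement that a single functional, hence a single set in $\mathcal{F}_{c/2}$, simultaneously charges a prescribed number $n$ of consecutive $\xi$-averages with a fixed positive weight $\epsilon'$. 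This requires choosing the block of indices carefully (using that one can push the initial segment arbitrarily far out, as in the $l \to \infty$ limit of Proposition~\ref{PropSM->cca}) and carefully tracking, via Lemma~\ref{LemmaRedukce} and Lemma~\ref{LemmaDeltaXi}, that neither the $\ell_1$-normalisation loss nor the diffuse tail of the functional can drop the relevant weight below a constant depending only on $c$.
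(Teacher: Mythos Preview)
There is a genuine gap in your core computation. From a Ces\`aro \emph{difference} $w = \frac{1}{q}\sum_{j=1}^q z_j - \frac{1}{p}\sum_{j=1}^p z_j$ with $\|w\| > c$ and $p \ge n$, you cannot extract a set $F \in \mathcal{F}_{c/2}$ that charges the \emph{first} $n$ averages $\xi_1^N, \dots, \xi_n^N$: the coefficients of $w$ on those supports are $\frac{1}{q} - \frac{1}{p}$, small and negative, so a functional nearly norming $w$ has no reason to be large there. Your level-set splitting will locate most of the $\ell_1$-mass of the coefficients of $w$, but that mass lives on the supports of $\xi_{p+1}^N, \dots, \xi_q^N$, not on the first $n$. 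Lemma~\ref{LemmaDeltaXi}, which you invoke, plays no role in the paper's proof of this proposition and does not repair the problem.

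The paper's argument supplies the missing mechanism and is structurally different. First it passes from $\operatorname{cca}(z_j) > c'$ to $\limsup_s \bigl\|\frac{1}{s}\sum_{i=1}^s z_i\bigr\| > c'/2$ via the triangle inequality; this is where the factor $\tfrac{1}{2}$ actually enters. Then, for a single Ces\`aro \emph{mean} with $x^*\bigl(\frac{1}{s}\sum_{i=1}^s z_i\bigr) > c'/2$ and $s$ chosen so that $s\epsilon c'/2 \ge n$, a pigeonhole count (using $\|z_i\| \le 1$) yields at least $n$ indices $i_1 < \dots < i_n \le s$ with $x^*(z_{i_j}) > (1-\epsilon)c'/2$ individually. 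The crucial step, entirely absent from your plan, is to invoke property P.4 of the repeated averages to choose $P$ with $\xi_j^P = \xi_{i_j}^{L_n}$, turning those $n$ good averages into the \emph{first} $n$ of a new summability method. This whole construction is wrapped in an application of the infinite Ramsey theorem (partitioning $[L_{n-1}]$ according to whether such an $x^*$ exists) followed by a diagonalisation over $n$, producing a single $L$ on which the hypothesis of Lemma~\ref{LemmaVelka2} can finally be verified for $\mathcal{F}_{c''}$ with $c'' = (1-2\epsilon)c'/2 \ge c/2$. Your outline correctly names Lemma~\ref{LemmaVelka2} as the endgame, but the route to its hypothesis requires the pigeonhole count, the P.4 relabeling, and the Ramsey stabilisation, none of which appear in your sketch.
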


\begin{proof}
    We can assume that $x = 0$ and $(x_n)_{n \in \mathbb{N}} \subseteq B_X$. Take $c' > c$ such that $\widetilde{\operatorname{cca}}_\xi^s ((x_n)_{n \in \mathbb{N}}) > c'$ and fix $\epsilon > 0$ small enough so that $(1-2\epsilon) c' \geq c$.

    As $\widetilde{\operatorname{cca}}_\xi^s ((x_n)_{n \in \mathbb{N}}) > c'$, we can find $M \in [\mathbb{N}]$ such that for all $N \in [M]$ we have $\operatorname{cca}(\xi_n^N \cdot (x_k)_{k \in \mathbb{N}}) > c'$. It follows from the triangle inequality that for all $N \in [M]$ we have
    \begin{align*}
        \limsup_{n \rightarrow \infty} \norm{ \frac{1}{n} \sum_{i=1}^n \xi_i^N \cdot (x_k)_{k \in \mathbb{N}}} > \frac{c'}{2}.
    \end{align*}
    
    Now we will recursively construct a sequence $(L_n)_{n \in \mathbb{N}}$ of infinite subsets of $M$ such that
    \begin{enumerate}[(a)]
        \item $L_1 \in [M]$, $L_n \in [L_{n-1}]$ for $n \geq 2$.
        \item For every $n \in \mathbb{N}$ and $N \in [L_n]$ there is $x^* \in B_{X^*}$ with
        \begin{align*}
            x^* \left( \xi_j^N \cdot (x_k)_{k \in \mathbb{N}} \right) > (1-\epsilon) \frac{c'}{2}, \hspace{2cm} j=1,\dots,n.
        \end{align*}
    \end{enumerate}
    We shall proceed by induction over $n \in \mathbb{N}$. For convenience we set $L_0 = M$. Let us assume that $L_{n-1}$ was already defined. We partition $[L_{n-1}]$ into two subsets
    \begin{align*}
        &A_1 = \left\{ P \in [L_{n-1}]: \; \exists x^* \in B_{X^*} \text{ such that} \; x^* \left( \xi^P_j \cdot (x_k)_{k \in \mathbb{N}} \right) > (1-\epsilon) \frac{c'}{2}, \; j \leq n \right\} \\
        &A_2 = [L_{n-1}] \setminus A_1.
    \end{align*}
    The set $A_1$ is open. Indeed, for a fixed set $P \in A_1$ the sets $P' \in [L_{n-1}]$ for which $\chi_P (j) = \chi_{P'}(j)$ for $j = 1, \dots, \max \operatorname{supp} \xi_{n}^P$ form a neighbourhood of $P$ in $[L_{n-1}]$ which is contained in $A_1$ as $\xi_j^P = \xi_j^{P'}$, $j=1,\dots,n$, for such sets by P.3 in \cite[page 171]{ArgyrosMercourakusTsarpalias}. Hence, $A_1$ is a Borel set, and thus a completely Ramsey set. By the infinite Ramsey theorem \cite[Theorem 10.1.3.]{nigel2006topics} there is $L_n \in [L_{n-1}]$ such that either $[L_n] \subseteq A_1$ or $[L_n] \subseteq A_2$. We will show that the second case is not possible.
    
    We recall that $\limsup_s \norm{ \frac{1}{s} \sum_{i=1}^s \xi_i^{L_n} \cdot (x_k)_{k \in \mathbb{N}}} > \frac{c'}{2}$. Therefore, we can find large enough $s \in \mathbb{N}$ and $x^* \in B_{X^*}$ such that the following two conditions are satisfied.
    \begin{align*}
        x^* \left( \frac{1}{s} \sum_{i=1}^s \xi_i^{L_n} \cdot (x_k)_{k \in \mathbb{N}} \right) >  \frac{c'}{2}, \hspace{2cm} s \epsilon \frac{c'}{2} \geq n.
    \end{align*}
    Set
    \begin{align*}
       I_1 &= \left\{1 \leq i \leq s: \; x^*(\xi_i^{L_n} \cdot (x_k)_{k \in \mathbb{N}}) > (1-\epsilon) \frac{c'}{2} \right\} \\
       I_2 &= \{1,\dots,s\} \setminus I_1.
    \end{align*}
    Then
    \begin{align*}
        \frac{c'}{2} &< \frac{1}{s} \sum_{i=1}^s x^* \left(\xi_i^{L_n} \cdot (x_k)_{k \in \mathbb{N}} \right) \\ &= \frac{1}{s} \left( \sum_{i \in I_1} x^* \left(\xi_i^{L_n} \cdot (x_k)_{k \in \mathbb{N}} \right) + \sum_{i \in I_2} x^* \left(\xi_i^{L_n} \cdot (x_k)_{k \in \mathbb{N}} \right) \right) \\
        &\leq \frac{1}{s} \left(|I_1| + s \: (1-\epsilon) \frac{c'}{2} \right).
    \end{align*}
    But that implies
    \begin{align*}
        |I_1| > s \epsilon \frac{c'}{2} \geq n.
    \end{align*}
    Hence, we can find $i_1 < i_2 < \cdots < i_n \leq s$ satisfying $x^*(\xi_{i_j}^{L_n} \cdot (x_k)_{k \in \mathbb{N}}) > (1-\epsilon) \frac{c'}{2}$. But now we can pick $P \in [L_{n}]$, such that $\xi^{L_{n}}_{i_j} = \xi^{P}_j$, see P.4 in \cite[page 171]{ArgyrosMercourakusTsarpalias}, and for this $P$ we have $P \in A_1$. Hence, $[L_n] \not\subseteq A_2$, and therefore $[L_n] \subseteq A_1$.
    
    Now we take a diagonal subsequence $L = (l_k)_{k \in \mathbb{N}}$ of the sequences $(L_k)_{k \in \mathbb{N}}$. For all $n \in \mathbb{N}$ and $P \in [L]$ with $l_n \leq \min P$ there is some $x^* \in B_{X^*}$ such that
    \begin{align*}
        x^*(\xi_i^{P} \cdot (x_k)_{k \in \mathbb{N}}) > (1-\epsilon) \frac{c'}{2} \hspace{2cm} \text{for } i=1,\dots,n.
    \end{align*}
    Indeed, as $l_n \leq \min P$, we have $P \in [L_n]$ and are done by property (b).
    
    Let us denote for brevity $c'' = (1- 2\epsilon) \frac{c'}{2}$. We take a further subset $P = (p_n)_{n \in \mathbb{N}} \in [L]$ such that the conclusion of Lemma \ref{LemmaRedukce} is satisfied on $P$ for $\delta = c''$ and $\epsilon$.
    
    We will show that $\mathcal{F}_{c''}$ and $P$ satisfy the assumptions of Lemma \ref{LemmaVelka2} for $\epsilon' = \epsilon\frac{c'}{2}$. That is, we want to show that for every $n \in \mathbb{N}$ and $P' \in [P]$ with $p_n \leq \min P'$ there is $F \in \mathcal{F}_{c''}$ with $\langle \xi_i^{P'}, F \rangle > \epsilon'$ for $i=1,\dots,n$. Take such $n \in \mathbb{N}$ and $P' \in [P]$. As $P' \in [L]$ and $l_n \leq p_n \leq \min P'$, we can find some $x^* \in B_{X^*}$ such that
    \begin{align*}
        x^* \left( \xi_i^{P'} \cdot (x_k)_{k \in \mathbb{N}} \right) > (1-\epsilon) \frac{c'}{2} = c'' + \epsilon', \hspace{2cm} i=1,\dots,n.
    \end{align*}
    But then for $F = \{n \in \mathbb{N}: \; x^*(x_n) > c''\} \in \mathcal{F}_{c''}$ we have that $\langle \xi_i^{P'}, F \rangle > \epsilon'$ for $i = 1,\dots,n$ as otherwise, if we set $\xi_i^{P'} = (b_k)_{k \in \mathbb{N}}$, we would get the following contradiction
    \begin{align*}
        c'' + \epsilon' < \sum_{k \in \mathbb{N}} b_k x^*(x_k) &= \sum_{k \in F} b_k x^*(x_k) + \sum_{k \in \mathbb{N} \setminus F} b_k x^*(x_k) \\
        &\leq \sum_{k \in F} b_k + \sum_{k \in \mathbb{N} \setminus F} b_k c'' \leq \epsilon' + c''.
    \end{align*}
    Hence, the assumptions of Lemma \ref{LemmaVelka2} are satisfied and we can find $Q = (q_i)_{i \in \mathbb{N}} \in [P]$ such that $\mathcal{S}_{\xi+1}^Q \subseteq \mathcal{F}_{c''}$. But this means that $(x_n)_{n \in \mathbb{N}}$ is $(\xi+1,c'')$-large. Recall that
    \begin{align*}
        c'' = (1-2\epsilon) \frac{c'}{2} \geq \frac{c}{2},
    \end{align*}
    and thus $\mathcal{F}_{c''} \subseteq \mathcal{F}_{\frac{c}{2}}$ and $(x_n)_{n \in \mathbb{N}}$ is also $(\xi+1,\frac{c}{2})$-large.
\end{proof}

The third inequality of (\ref{star}) from Theorem \ref{TheoremQuantBS} follows from Proposition \ref{Propcca->large}. We finish the proof of Theorem \ref{TheoremQuantBS} by proving the last inequality, for which we also use the approach of \cite{ArgyrosMercourakusTsarpalias}.

\begin{prop} \label{Proplarge->sm}
    Let $(x_n)_{n \in \mathbb{N}}$ be a sequence in a Banach space $X$ which weakly converges to some $x \in X$. Let $c>0$ and $\xi < \omega_1$ be such that $(x_n)_{n \in \mathbb{N}}$ is $(\xi,c)$-large. Then for any $d< \frac{c}{2}$ there is $N \in [\mathbb{N}]$ such that $(x_n-x)_{n \in N}$ generates an $\ell_1^{\xi}$-spreading model with constant $d$.
\end{prop}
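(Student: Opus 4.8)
The plan is to run the argument of \cite[Lemma 2.4.9]{ArgyrosMercourakusTsarpalias} in quantitative form, the extra ingredient being Lemma \ref{LemmaRedukce}. First I would reduce to the standard situation: translating, we may assume $x=0$, and then, dividing every $x_n$ and the constant $c$ by $\sup_n\norm{x_n}$ (which is positive, as $(x_n)_{n\in\mathbb{N}}$ is $(\xi,c)$-large), we may assume $(x_n)_{n\in\mathbb{N}}\subseteq B_X$; this rescaling affects neither the hypothesis ``$(x_n)_{n\in\mathbb{N}}$ is $(\xi,c)$-large'' (since $\mathcal{F}_{c/R}((x_n/R)_{n\in\mathbb{N}})=\mathcal{F}_c((x_n)_{n\in\mathbb{N}})$) nor the inequality $d<\tfrac c2$, and a spreading-model constant for the rescaled sequence rescales back to one for the original. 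Now fix $M\in[\mathbb{N}]$ with $\mathcal{S}_\xi^M\subseteq\mathcal{F}_c((x_n)_{n\in\mathbb{N}})$, fix $\epsilon\in(0,1)$ so small that $(1-\epsilon)\tfrac c2-\epsilon c\geq d$ (possible because $d<\tfrac c2$), and let $N=(n_j)_{j\in\mathbb{N}}\in[M]$ be the set produced by Lemma \ref{LemmaRedukce} for this $M$, with $\delta=c$ and this $\epsilon$. I claim $(x_n)_{n\in N}$ generates an $\ell_1^\xi$-spreading model with constant $d$.

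To verify this, fix $G\in\mathcal{S}_\xi$ and scalars $(\alpha_i)_{i\in G}$; by homogeneity assume $\sum_{i\in G}|\alpha_i|=1$, and, replacing $(\alpha_i)_{i\in G}$ by its negative if necessary, assume that $G^+:=\{i\in G:\ \alpha_i>0\}$ satisfies $\sum_{i\in G^+}\alpha_i\geq\tfrac12$; note $G^+\in\mathcal{S}_\xi$ since $\mathcal{S}_\xi$ is hereditary. The key point is that $\widetilde{G}^+:=\{n_i:\ i\in G^+\}$ lies in $\mathcal{S}_\xi^M$, hence in $\mathcal{F}_c((x_n)_{n\in\mathbb{N}})$. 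Indeed, writing $N=(m_{\pi(j)})_{j\in\mathbb{N}}$ where $\pi:\mathbb{N}\to\mathbb{N}$ is the strictly increasing map with $\pi(j)\geq j$ determined by $N\subseteq M$, the spreading property of $\mathcal{S}_\xi$ applied to $G^+\in\mathcal{S}_\xi$ gives $\{\pi(i):\ i\in G^+\}\in\mathcal{S}_\xi$, and therefore $\widetilde{G}^+=\{m_{\pi(i)}:\ i\in G^+\}\in\mathcal{S}_\xi^M\subseteq\mathcal{F}_c((x_n)_{n\in\mathbb{N}})$.

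Then I would apply Lemma \ref{LemmaRedukce} to the vector $(a_n)_{n\in\mathbb{N}}\in S_{\ell_1}$ given by $a_{n_i}=\alpha_i$ for $i\in G$ and $a_n=0$ otherwise (so $\operatorname{supp}((a_n)_{n\in\mathbb{N}})\subseteq N$) and to the set $F=\widetilde{G}^+\in\mathcal{F}_c$. Since $\langle(a_n)_{n\in\mathbb{N}},\widetilde{G}^+\rangle=\sum_{i\in G^+}\alpha_i\geq\tfrac12$, this yields
\begin{align*}
    \norm{\sum_{i\in G}\alpha_i x_{n_i}}=\norm{\sum_{n\in\mathbb{N}}a_n x_n}&\geq(1-\epsilon)\,c\cdot\langle(a_n)_{n\in\mathbb{N}},\widetilde{G}^+\rangle-\epsilon c\\
    &\geq(1-\epsilon)\tfrac c2-\epsilon c\geq d=d\sum_{i\in G}|\alpha_i|.
\end{align*}
As $G\in\mathcal{S}_\xi$ and the scalars were arbitrary, $(x_n)_{n\in N}$ generates an $\ell_1^\xi$-spreading model with constant $d$, which after undoing the initial reduction is exactly the assertion about $(x_n-x)_{n\in N}$.

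The point where care is needed is the estimate in the last display: the naive attempt of picking a single $x^*\in B_{X^*}$ with $x^*(x_{n_i})\geq c$ on $G^+$ and writing $\norm{\sum_{i\in G}\alpha_i x_{n_i}}\geq x^*\!\left(\sum_{i\in G}\alpha_i x_{n_i}\right)$ fails, because the contribution $\sum_{i\in G\setminus G^+}\alpha_i x^*(x_{n_i})$ of the negative coefficients is only bounded below by $-\sum_{i\in G\setminus G^+}|\alpha_i|$ and can annihilate the main term when $c<1$. Lemma \ref{LemmaRedukce} is precisely what repairs this: passing to a suitable $N$ lets the witnessing functional be chosen with $\ell_1$-small action off $\widetilde{G}^+$, so the stray contribution costs only $\epsilon c$. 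The only other bookkeeping detail is ensuring that $\widetilde{G}^+$ falls into the smaller family $\mathcal{S}_\xi^M$ rather than merely $\mathcal{S}_\xi[M]$, which is exactly what the spreading property delivers via the map $\pi$. The factor $2$ (so that the conclusion only reaches constants $d<\tfrac c2$) is the unavoidable price of splitting $G$ into its positive and negative parts, since $\mathcal{F}_c$ only supplies functionals that are positive on sets.
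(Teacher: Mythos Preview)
Your proof is correct and follows essentially the same route as the paper's: reduce to $x=0$ and $(x_n)\subseteq B_X$, pick $\epsilon$ with $(1-\epsilon)\tfrac{c}{2}-\epsilon c\geq d$, pass via Lemma~\ref{LemmaRedukce} to $N\in[M]$, and for a given $F\in\mathcal{S}_\xi$ with normalized coefficients apply that lemma to the set $\{n_k:\ k\in F^+\}\in\mathcal{S}_\xi^N\subseteq\mathcal{S}_\xi^M\subseteq\mathcal{F}_c$. The paper states the inclusion $\{n_k:\ k\in F^+\}\in\mathcal{S}_\xi^N$ directly, while you spell out the spreading argument via the injection $\pi$; your added commentary on why a naive functional estimate fails and on the loss of the factor $2$ is correct but not present in the paper.
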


\begin{proof}
    Without loss of generality we can assume that $x = 0$ and $(x_n)_{n \in \mathbb{N}} \subseteq B_{X}$. As $(x_n)_{n \in \mathbb{N}}$ is $(\xi,c)$-large, there is $M \in [\mathbb{N}]$ such that $\mathcal{S}_{\xi}^M \subseteq \mathcal{F}_c$. We can take $\epsilon > 0$ small enough such that $(1-\epsilon) \frac{c}{2} - \epsilon c \geq d$. We will use Lemma \ref{LemmaRedukce} to find $N = (n_k)_{k \in \mathbb{N}} \in [M]$ such that the conclusion of Lemma \ref{LemmaRedukce} is satisfied on $N$ for $\epsilon$ and $\delta = c$.
    
    Now we will show that $(x_n)_{n \in N}$ generates an $\ell_1^{\xi}$-speading model with constant $d$. Fix $F \in \mathcal{S}_{\xi}$ and a sequence of scalars $(b_k)_{k \in F}$. We can assume without loss of generality that $\sum_{k \in F} |b_k| = 1$. Then it is enough to show that
    \begin{align*}
        \norm{\sum_{k \in F} b_k x_{n_k}} \geq d.
    \end{align*}
    We define the sequence of scalars $(a_k)_{k \in \mathbb{N}}$ by the rule $a_j = b_k$, if $j = n_k$ for some $k \in F$, and $a_j = 0$ otherwise. Then $(a_k)_{k \in \mathbb{N}} \in S_{\ell_1}$ and $\operatorname{supp} ((a_k)_{k \in \mathbb{N}}) \subseteq N$. Hence, we get that the following inequality holds for any $G \in \mathcal{F}_{c}$.
    \begin{align*}
        \norm{\sum_{k \in \mathbb{N}} a_k x_k} \geq (1-\epsilon) c \cdot \langle (a_n)_{n \in \mathbb{N}}, G \rangle - \epsilon c.
    \end{align*}
    We can also assume, if we define $F^+ = \{k \in F: \; b_k > 0\}$ and $F^- = \{k \in F: \; b_k < 0\}$, that $\sum_{k \in F^+} |b_k| \geq \frac{1}{2}$. If not, we can consider $(-b_k)_{k \in F}$ instead of $(b_k)_{k \in F}$. Note that $G = \{n_k: \; k \in F^+\} \in \mathcal{S}^N_{\xi} \subseteq \mathcal{F}_{c}$ as $F^+ \in \mathcal{S}_{\xi}$. Then we have
    \begin{align*}
        \langle (a_n)_{n \in \mathbb{N}}, G \rangle = \sum_{k \in F^+} a_{n_k} = \sum_{k \in F^+} b_k \geq \frac{1}{2}.
    \end{align*}
    Therefore
    \begin{align*}
        \norm{\sum_{k \in F} b_k x_{n_k}} &= \norm{\sum_{k \in \mathbb{N}} a_k x_k} \geq (1-\epsilon) \frac{c}{2} - \epsilon c \geq d.
    \end{align*}
\end{proof}

\begin{remark}
    The proof of Theorem \ref{TheoremQuantBS} combines the approach of \cite{Bendov__2015}, which is generalised for arbitrary $\xi < \omega_1$ and used to prove Proposition \ref{PropSM->cca}, and the approach of \cite{ArgyrosMercourakusTsarpalias}, which is used to prove Propositions \ref{Propcca->large} and \ref{Proplarge->sm}. More precisely, the proofs of Propositions \ref{Propcca->large} and \ref{Proplarge->sm} mimic the proof of \cite[Theorem 2.4.1]{ArgyrosMercourakusTsarpalias} with quantitative interpretation of \cite[Lemmata 2.4.3, 2.4.8]{ArgyrosMercourakusTsarpalias}. We also needed Lemmata \ref{LemmaDeltaXi} and \ref{LemmaVelka2} (that is \cite[Propositions 2.1.10, 2.3.6 and Theorem 2.2.6]{ArgyrosMercourakusTsarpalias}, but these results offer no quantitative improvement, and so are presented here without proof.
\end{remark}

Now we will prove two corollaries to Theorem \ref{TheoremQuantBS}. The first one is that the quantity $\operatorname{wbs}_\xi$ indeed characterizes weak $\xi$-Banach-Saks sets.

\begin{prop}\label{PropWBS}
    Let $A$ be a bounded set in a Banach space $X$ and $\xi<\omega_1$. Then $A$ is a weak $\xi$-Banach-Saks set, if and only if $\operatorname{wbs}_\xi (A) = 0$.
\end{prop}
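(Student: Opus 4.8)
The plan is to derive Proposition~\ref{PropWBS} as a direct corollary of the chain of inequalities~(\ref{star}) in Theorem~\ref{TheoremQuantBS}, together with the already-established interpretations of the endpoint quantities $\operatorname{sm}_{\xi+1}$ and $\operatorname{wus}_{\xi+1}$. The key observation is that all four quantities appearing in~(\ref{star}) vanish simultaneously: since $2\operatorname{sm}_{\xi+1}(A) \leq \operatorname{wbs}_\xi(A) \leq 4\operatorname{sm}_{\xi+1}(A)$, we have $\operatorname{wbs}_\xi(A) = 0$ if and only if $\operatorname{sm}_{\xi+1}(A) = 0$. So it suffices to prove that $A$ is a weak $\xi$-Banach-Saks set if and only if $\operatorname{sm}_{\xi+1}(A) = 0$, i.e. if and only if $A$ contains no weakly convergent sequence $(x_n)_{n\in\mathbb{N}}$ (say with limit $x$) such that $(x_n - x)_{n\in\mathbb{N}}$ generates an $\ell_1^{\xi+1}$-spreading model.

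First I would handle the easy direction. Suppose $A$ is \emph{not} a weak $\xi$-Banach-Saks set, so there is a weakly convergent sequence $(x_n)_{n\in\mathbb{N}}$ in $A$, convergent to some $x$, which is not $(M,\xi)$-summable for any $M \in [\mathbb{N}]$. Since summability methods preserve weak convergence (as noted after the definition of summability methods), failure of $(M,\xi)$-summability for every $M$ gives, via the triangle-inequality trick used in the proof of Proposition~\ref{Propcca->large}, that $\widetilde{\operatorname{cca}}_\xi((x_n)_{n\in\mathbb{N}}) > 0$; alternatively one invokes Theorem~\ref{TheoremAMT}: option~(a) fails, so option~(b) holds, yielding $M \in [\mathbb{N}]$ with $(x_{m_n})_{n\in\mathbb{N}}$ generating an $\ell_1^{\xi+1}$-spreading model, hence $(x_{m_n} - x)_{n\in\mathbb{N}}$ does too (the limit being $x$, and subtracting the constant $x$ only helps), so $\operatorname{sm}_{\xi+1}(A) > 0$ and therefore $\operatorname{wbs}_\xi(A) > 0$.

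Conversely, suppose $A$ is a weak $\xi$-Banach-Saks set; I want $\operatorname{sm}_{\xi+1}(A) = 0$. If not, there is a weakly convergent sequence $(x_n)_{n\in\mathbb{N}}$ in $A$ with limit $x$ such that $(x_n - x)_{n\in\mathbb{N}}$ generates an $\ell_1^{\xi+1}$-spreading model with some constant $c > 0$. Then $(x_{n^3})_{n\in\mathbb{N}}$ is still a sequence in $A$ weakly convergent to $x$, and by Proposition~\ref{PropSM->cca} we have $\widetilde{\operatorname{cca}}_\xi((x_{n^3})_{n\in\mathbb{N}}) \geq 2c > 0$; in particular for every $M \in [\mathbb{N}]$ the sequence $(x_{n^3})_{n\in\mathbb{N}}$ is not $(M,\xi)$-summable, contradicting that $A$ is a weak $\xi$-Banach-Saks set. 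Hence $\operatorname{sm}_{\xi+1}(A) = 0$, and by~(\ref{star}), $\operatorname{wbs}_\xi(A) = 0$.

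The only delicate point — and the one I would write out carefully rather than wave at — is the precise logical translation between "$A$ is a weak $\xi$-Banach-Saks set" and "$\widetilde{\operatorname{cca}}_\xi((x_n)_{n\in\mathbb{N}}) = 0$ for every weakly convergent sequence in $A$". One direction is immediate: if $(x_n)_{n\in\mathbb{N}}$ is $(M,\xi)$-summable for some $M$, then $\operatorname{cca}(\xi_n^M \cdot (x_k)_{k\in\mathbb{N}}) = 0$, so the infimum defining $\widetilde{\operatorname{cca}}_\xi$ is $0$. For the reverse one must be slightly careful: $\widetilde{\operatorname{cca}}_\xi((x_n)_{n\in\mathbb{N}}) = 0$ only gives a sequence $M_k$ with $\operatorname{cca}(\xi_n^{M_k}\cdot(x_j)_j) \to 0$, not an exact witness; but here one does not actually need the reverse implication at that level of generality. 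It is cleanest to argue: $\operatorname{wbs}_\xi(A) = 0$ forces $\operatorname{sm}_{\xi+1}(A) = 0$ by~(\ref{star}), which by Theorem~\ref{TheoremAMT} (applied to $(x_n - x)_{n\in\mathbb{N}}$, which is weakly null) forces alternative~(a) for every weakly convergent sequence in $A$, and alternative~(a) in particular yields an $M$ for which $(x_n)_{n\in\mathbb{N}}$ is $(M,\xi)$-summable — exactly the definition of weak $\xi$-Banach-Saks. This routes everything through the qualitative Theorem~\ref{TheoremAMT} and sidesteps the $\inf$-versus-witness subtlety entirely.
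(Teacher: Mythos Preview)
Your proposal is correct and follows essentially the same route as the paper: both arguments reduce to the equivalence $\operatorname{wbs}_\xi(A)=0 \Leftrightarrow \operatorname{sm}_{\xi+1}(A)=0$ from Theorem~\ref{TheoremQuantBS} and then invoke Theorem~\ref{TheoremAMT} to pass from ``no subsequence generates an $\ell_1^{\xi+1}$-spreading model'' to ``every weakly convergent sequence is $(M,\xi)$-summable for some $M$''. The only organizational difference is that for the implication ``$A$ weak $\xi$-Banach-Saks $\Rightarrow \operatorname{wbs}_\xi(A)=0$'' the paper simply unwinds the definition (if $\widetilde{\operatorname{cca}}_\xi((x_n))>0$ then no $M$ works), whereas you detour through Proposition~\ref{PropSM->cca}; your final paragraph in fact identifies the paper's cleaner route. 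One small slip: Theorem~\ref{TheoremAMT} should be applied directly to the weakly null sequence $(x_n-x)_{n\in\mathbb{N}}$, so the conclusion already concerns $(x_{m_n}-x)_{n\in\mathbb{N}}$ --- the remark that ``subtracting the constant $x$ only helps'' is neither needed nor correct in general.
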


\begin{proof}
    It is straightforward that if $\operatorname{wbs}_\xi(A) > 0$ then $A$ is not a weak $\xi$-Banach-Saks set. On the other hand, suppose that $\operatorname{wbs}_\xi (A) = 0$ and fix a sequence $(x_n)_{n \in \mathbb{N}}$ in $A$ which is weakly convergent to some $x \in X$. It follows from Theorem \ref{TheoremQuantBS} that $\operatorname{sm}_{\xi+1}(A) = 0$, and therefore $(x_n - x)_{n \in \mathbb{N}}$ contains no subsequence that generates an $\ell_1^{\xi+1}$-spreading model. Hence, by Theorem \ref{TheoremAMT}, we get that for every $M \in [\mathbb{N}]$ there is $L \in [M]$ such that for all $P \in [L]$ the sequence $(x_n-x)_{n \in \mathbb{N}}$, and thus also the sequence $(x_n)_{n \in \mathbb{N}}$, is $(P,\xi)$-summable. Therefore, we can take $M = \mathbb{N}$ and $P = L$, and get that $A$ is a weak $\xi$-Banach-Saks set.
\end{proof}

The second corollary shows that weak $\xi$-Banach-Saks sets enjoy a formally stronger property analogous to the fact that any weakly convergent sequence in a weak Banach-Saks set admits a subequence with every further subsequence being Ces\`aro summable (indeed, in this case it is enough to consider a uniformly weakly convergent subsequence). Note that the following proposition is, in essence, a qualitative version of the inequalities $\operatorname{wbs}_\xi(A) \leq \operatorname{wbs}_\xi^s(A) \leq 2\operatorname{wbs}_\xi(A)$ from Theorem \ref{TheoremQuantBS}.

\begin{prop}\label{PropWBSEkvivalentniDef}
    Let $A$ be a bounded subset of a Banach space $X$. Then the following are equivalent:
    \begin{enumerate}[(a)]
        \item For every weakly convergent sequence $(x_n)_{n \in \mathbb{N}}$ in $A$ and every $M \in [\mathbb{N}]$ there is $N \in M$ such that for all $P \in [N]$ the sequence $(x_n)_{n \in \mathbb{N}}$ is $(P,\xi)$-summable;
        \item $A$ is a weak $\xi$-Banach-Saks set.
    \end{enumerate}
\end{prop}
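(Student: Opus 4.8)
The plan is to prove Proposition \ref{PropWBSEkvivalentniDef} by observing that the implication (a) $\Rightarrow$ (b) is immediate (take $M = \mathbb{N}$ and $P = N$, exactly as at the end of the proof of Proposition \ref{PropWBS}), so the real content is (b) $\Rightarrow$ (a). For this direction I would argue exactly as in Proposition \ref{PropWBS} but relativized to an arbitrary starting set $M$. Suppose $A$ is a weak $\xi$-Banach-Saks set; then $\operatorname{wbs}_\xi(A) = 0$ by Proposition \ref{PropWBS}, hence $\operatorname{sm}_{\xi+1}(A) = 0$ by the first inequality of (\ref{star}) in Theorem \ref{TheoremQuantBS}. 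Fix a weakly convergent sequence $(x_n)_{n \in \mathbb{N}}$ in $A$, say with weak limit $x$, and fix $M \in [\mathbb{N}]$.

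The key step is then to apply Theorem \ref{TheoremAMT} to the weakly null sequence $(x_n - x)_{n \in \mathbb{N}}$. Since $\operatorname{sm}_{\xi+1}(A) = 0$, no subsequence of $(x_n - x)_{n \in \mathbb{N}}$ generates an $\ell_1^{\xi+1}$-spreading model, so alternative (b) of Theorem \ref{TheoremAMT} fails; therefore alternative (a) holds, which says precisely that for \emph{every} $M \in [\mathbb{N}]$ there is $L \in [M]$ such that for all $P \in [L]$ the sequence $(x_n - x)_{n \in \mathbb{N}}$ is $(P,\xi)$-summable. Here I use the observation (recorded in Subsection \ref{subsection:SHRA}) that summability methods preserve convergence: since $(x_n - x)_{n \in \mathbb{N}}$ is $(P,\xi)$-summable and the constant sequence $x$ is trivially $(P,\xi)$-summable (its Repeated Averages all equal $x$), the sum $(x_n)_{n \in \mathbb{N}}$ is also $(P,\xi)$-summable. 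Taking $N = L$ gives exactly the conclusion of (a). I expect no serious obstacle here: the whole argument is a direct repackaging of the proof of Proposition \ref{PropWBS}, the only new point being that Theorem \ref{TheoremAMT}(a) is already quantified over all $M \in [\mathbb{N}]$, so one does not need to specialize $M = \mathbb{N}$.

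One small technical point I would make explicit is why $(x_n - x)_{n \in \mathbb{N}}$ being $(P,\xi)$-summable entails the same for $(x_n)_{n \in \mathbb{N}}$: for each $n$, $\xi_n^P \cdot (x_k)_{k \in \mathbb{N}} = \xi_n^P \cdot (x_k - x)_{k \in \mathbb{N}} + (\langle \xi_n^P, \operatorname{supp}\xi_n^P\rangle) x = \xi_n^P \cdot (x_k - x)_{k \in \mathbb{N}} + x$, since $\xi_n^P \in S_{\ell_1}^+$ has coordinate-sum $1$; hence the Ces\`aro averages of $(\xi_n^P \cdot (x_k)_{k \in \mathbb{N}})_{n}$ and of $(\xi_n^P \cdot (x_k - x)_{k \in \mathbb{N}})_{n}$ differ by the constant $x$, so one is norm Cauchy iff the other is. This completes the proof. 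If desired, the statement of (a) should be corrected to read ``there is $N \in [M]$'' rather than ``$N \in M$''.
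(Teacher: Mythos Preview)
Your proposal is correct and follows essentially the same approach as the paper: both use Proposition~\ref{PropWBS} to get $\operatorname{wbs}_\xi(A)=0$, Theorem~\ref{TheoremQuantBS} to deduce $\operatorname{sm}_{\xi+1}(A)=0$, and then the dichotomy of Theorem~\ref{TheoremAMT} to conclude alternative~(a). The only cosmetic difference is that the paper phrases (b)$\Rightarrow$(a) as the contrapositive (if (a) fails, Theorem~\ref{TheoremAMT} yields an $\ell_1^{\xi+1}$-spreading model, hence $\operatorname{sm}_{\xi+1}(A)>0$, hence $\operatorname{wbs}_\xi(A)>0$), whereas you argue directly; your additional explicit justification of why $(P,\xi)$-summability transfers from $(x_n-x)$ to $(x_n)$ and your remark that ``$N\in M$'' should read ``$N\in[M]$'' are both welcome.
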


\begin{proof}
    The fact that $(a)$ implies $(b)$ follows immediately from the definitions. We will show the other implication. Suppose that $(a)$ does not hold. Then by Theorem \ref{TheoremAMT} there is a sequence $(x_n)_{n \in \mathbb{N}}$ in $A$ which converges weakly to some $x \in X$ such that $(x_n-x)_{n \in \mathbb{N}}$ generates an $\ell_1^{\xi+1}$-spreading model with constant $c$ for some $c>0$. But then $\operatorname{sm}_{\xi+1}(A) \geq c$ and thus by Theorem \ref{TheoremQuantBS} $\operatorname{wbs}_\xi(A) \geq 2c$ and $A$ cannot be a weak $\xi$-Banach-Saks set by Proposition \ref{PropWBS}.
\end{proof}

\section{$\xi$-Banach-Saks sets and compactness} \label{section:QuantBS}

Following \cite{Bendov__2015}, in this section we will show the quantitative interpretation of the following implications for a bounded subset $A$ of a Banach space $X$ and $\xi < \omega_1$:
\begin{align*}
    \begin{array}{c}
        A \text{ is relatively norm compact} \\
        \Downarrow \\
        A \text{ is a } \xi\text{-Banach-Saks set } \\
        \Downarrow \\
        A \text{ is relatively weakly compact and a weak } \xi\text{-Banach-Saks set}.
    \end{array}
\end{align*}

Note that the second implication can be reversed but the converse implication cannot be quantified for $\xi = 0$, as illustrated by \cite[Example 3.3.]{Bendov__2015}.

We will first define the quantities measuring weak and norm non-compactness.

\begin{definition}
    Let $(x_n)_{n \in \mathbb{N}}$ be a bounded sequence in a Banach space $X$. We define the quantity
    \begin{align*}
        \widetilde{\operatorname{ca}}(x_n) = \inf \{\operatorname{ca}(y_n): \; (y_n)_{n \in \mathbb{N}} \text{ is a subsequence of } (x_n)_{n \in \mathbb{N}}\}.
    \end{align*}

    Let $A$ be a bounded subset of a Banach space $X$. We define
    \begin{align*}
        \beta(A) &= \sup \{ \widetilde{\operatorname{ca}}(x_n): \; (x_n)_{n \in \mathbb{N}} \text{ is a sequence in }A\} \\
        \operatorname{wck}_X (A) &= \operatorname{sup} \{ \operatorname{d}(\operatorname{clust}_{X^{**}}(x_n),X): \; (x_n)_{n \in \mathbb{N}} \text{ is a sequence in }A\},
    \end{align*}
    where $\operatorname{d} (B,C) = \inf \{\norm{b-c}: \; b \in B, c \in C\}$ is the standard distance of sets and $\operatorname{clust}_{X^{**}}(x_n)$ is the set of all weak$^*$ cluster points of the sequence $(x_n)_{n \in \mathbb{N}}$ in the space $X^{**}$.
\end{definition}

Note that the quantity $\beta$ indeed measures non-compactness and the quantity $\operatorname{wck}_X$ indeed measures weak non-compactness. That is, $\beta(A) = 0$ if and only if $A$ is relatively compact and $\operatorname{wck}_X (A) = 0$ if and only if $A$ is relatively weakly compact. For more information about these quantities and their relation to other quantities see \cite{Bendov__2015}. Now we are all prepared to prove the following theorem.

\begin{theorem}\label{TheoremBSaWBS}
    Let $A$ be a bounded subset of a Banach space $X$ and $\xi < \omega_1$. Then
    \begin{align*}
        \max \{\operatorname{wck}_X(A), \operatorname{wbs}_\xi(A)\} \leq \operatorname{bs}_\xi(A) \leq \operatorname{bs}_\xi^s(A) \leq \beta (A).
    \end{align*}
\end{theorem}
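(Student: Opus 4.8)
The plan is to establish the four inequalities from left to right, treating the chain $\max\{\operatorname{wck}_X(A),\operatorname{wbs}_\xi(A)\} \le \operatorname{bs}_\xi(A) \le \operatorname{bs}_\xi^s(A) \le \beta(A)$ piece by piece. The middle inequality $\operatorname{bs}_\xi(A) \le \operatorname{bs}_\xi^s(A)$ has already been observed to be trivial, since $\widetilde{\operatorname{cca}}_\xi(x_n) \le \widetilde{\operatorname{cca}}_\xi^s(x_n)$ for every sequence (the infimum over $M$ of a quantity is at most the sup over $M$ of the inf over $[M]$ of the same quantity). The inequality $\operatorname{wbs}_\xi(A) \le \operatorname{bs}_\xi(A)$ is also immediate from the definitions: the supremum defining $\operatorname{wbs}_\xi$ is taken over the smaller class of weakly convergent sequences in $A$, while $\operatorname{bs}_\xi$ takes the supremum over all sequences in $A$. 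So the real content lies in the first inequality's $\operatorname{wck}_X$-term and in the last inequality.

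For $\operatorname{wck}_X(A) \le \operatorname{bs}_\xi(A)$, fix a sequence $(x_n)$ in $A$ and let $\eta < \operatorname{d}(\operatorname{clust}_{X^{**}}(x_n), X)$. I would show $\widetilde{\operatorname{cca}}_\xi(x_n) \ge \eta$, i.e.\ that for every $M \in [\mathbb{N}]$ one has $\operatorname{cca}(\xi_n^M \cdot (x_k)_{k})\ge\eta$. The key point is that passing to a summability method does not destroy the weak$^*$-cluster structure: if $x^{**}$ is a weak$^*$ cluster point of $(x_n)$, then because each $\xi_n^M$ is a convex block of the sequence with supports marching to infinity, the averaged sequence $(\xi_n^M \cdot (x_k)_k)$ still has all its weak$^*$ cluster points at distance $\ge \eta$ from $X$ — more carefully, one shows that the Cesàro averages $\frac1s\sum_{i=1}^s \xi_i^M\cdot(x_k)_k$ form a sequence whose weak$^*$ cluster points in $X^{**}$ remain far from $X$, using that convex combinations and iterated averages of the original sequence accumulate (in the weak$^*$ topology) inside $\overline{\operatorname{conv}}^{w^*}\operatorname{clust}_{X^{**}}(x_n)$ translated appropriately. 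Then I would invoke the standard fact (as in \cite{Bendov__2015}) that if a bounded sequence $(y_n)$ has $\operatorname{d}(\operatorname{clust}_{X^{**}}(y_n),X)>\eta$ then $\operatorname{ca}(y_n)\ge\eta$ — intuitively, two far-apart subsequential weak$^*$-limits force the sequence to oscillate by at least $\eta$ in norm. Applying this with $(y_n) = \bigl(\frac1n\sum_{i=1}^n \xi_i^M\cdot(x_k)_k\bigr)_n$ gives $\operatorname{cca}(\xi_n^M\cdot(x_k)_k)\ge\eta$, and taking the infimum over $M$ and then the supremum over sequences and over $\eta$ yields the claim.

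For the final inequality $\operatorname{bs}_\xi^s(A)\le\beta(A)$, fix a sequence $(x_n)$ in $A$ and let $\eta < \widetilde{\operatorname{cca}}_\xi^s(x_n)$; the goal is $\widetilde{\operatorname{ca}}(x_n)\ge\eta$, i.e.\ every subsequence of $(x_n)$ fails to be norm Cauchy by at least $\eta$. Suppose toward a contradiction that some subsequence $(x_n)_{n\in L}$ has $\operatorname{ca}((x_n)_{n\in L})<\eta$; I want to contradict $\widetilde{\operatorname{cca}}_\xi^s(x_n)>\eta$, which by definition means: for some $M\in[\mathbb{N}]$, every $N\in[M]$ satisfies $\operatorname{cca}(\xi_n^N\cdot(x_k)_k)>\eta$. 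The idea is that $\operatorname{cca}$ is dominated by $\operatorname{ca}$ along any subsequence: if $(x_n)_{n\in L}$ is nearly norm Cauchy, then so is any further block-averaged sequence built over $N\in[L]$, because $\xi_n^N\cdot(x_k)_k$ is a convex combination of the tail $\{x_k : k \in N, k\ge n_{\text{th coordinate}}\}$, all of whose elements are within $\operatorname{ca}((x_n)_{n\in L})+\epsilon$ of each other for large indices; hence the Cesàro sums of $(\xi_n^N\cdot(x_k)_k)$ are also eventually within roughly that diameter, forcing $\operatorname{cca}(\xi_n^N\cdot(x_k)_k)\le\operatorname{ca}((x_n)_{n\in L})<\eta$. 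Choosing $N\in[L\cap M]$ (possible since both are infinite, after noting we may take $L$ to begin inside $M$ or intersect $[M]$ appropriately — here one uses that the defining $M$ for $\widetilde{\operatorname{cca}}_\xi^s$ must work for \emph{all} $N\in[M]$, so pass to $N\in[M]$ that is also eventually contained in $L$) contradicts $\operatorname{cca}(\xi_n^N\cdot(x_k)_k)>\eta$.

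\textbf{Main obstacle.} The delicate step is the $\operatorname{wck}_X$-bound: one must verify carefully that applying a Repeated-Averages summability method (and then taking Cesàro sums) does not pull the weak$^*$-cluster set of the resulting sequence closer to $X$. This requires an argument that weak$^*$-cluster points of convex-block averages of $(x_n)$ lie in the weak$^*$-closed convex hull of translates of the original cluster set, combined with a quantitative lower bound relating $\operatorname{ca}$ of a sequence to the distance from its weak$^*$-cluster set to $X$ — the precise bookkeeping of how "$\operatorname{d}(\operatorname{clust}_{X^{**}},X)$" behaves under averaging is where care is needed, and it is essentially the only place a nontrivial idea beyond the definitions enters.
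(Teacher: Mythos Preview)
Your proposal has a genuine gap in the argument for $\operatorname{wck}_X(A) \leq \operatorname{bs}_\xi(A)$. You try to prove the \emph{pointwise} inequality $\operatorname{d}(\operatorname{clust}_{X^{**}}(x_n), X) \leq \widetilde{\operatorname{cca}}_\xi((x_n))$ for each fixed sequence, by arguing that the weak$^*$ cluster points of the iterated Ces\`aro averages remain at distance $\geq \eta$ from $X$. But this is false: distance to $X$ is not preserved under convex combinations. Concretely, take $X = c_0$ and $x_n = (-1)^n s_{\lceil n/2 \rceil}$, where $(s_k)$ is the summing basis. The weak$^*$ cluster set of $(x_n)$ in $\ell_\infty$ is $\{\mathbf{1}, -\mathbf{1}\}$, at distance $1$ from $c_0$; yet for $\xi = 0$ and $M = \mathbb{N}$ the Ces\`aro sums $\tfrac{1}{s}\sum_{n=1}^s x_n$ converge in norm to $0 \in c_0$, so $\widetilde{\operatorname{cca}}_0((x_n)) = 0$. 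Your heuristic that the new cluster points lie in the weak$^*$-closed convex hull of the old ones is correct, but that convex hull may meet $X$ even when the original cluster set does not. The paper sidesteps this by passing through the double-limit quantity $\gamma_0(A)$: if a weak$^*$-null sequence $(x_j^*) \subseteq B_{X^*}$ and a sequence $(x_k) \subseteq A$ satisfy $\lim_j \lim_k x_j^*(x_k) > c$, then for any $M$ the averages $y_s = \tfrac{1}{s}\sum_{i=1}^s \xi_i^M \cdot (x_k)_k$ still satisfy $\lim_s x_j^*(y_s) = \lim_k x_j^*(x_k) > c$ (summability methods preserve limits), and weak$^*$-nullness of $(x_j^*)$ then forces $\operatorname{ca}((y_s)) \geq c$. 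One then uses $\gamma_0 = \operatorname{wck}_X$ in the separable case and a routine reduction to separable subspaces.

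There is a second, smaller gap in your argument for $\operatorname{bs}_\xi^s(A) \leq \beta(A)$: the set $L$ witnessing $\widetilde{\operatorname{ca}}((x_n)) < \eta$ and the set $M$ witnessing $\widetilde{\operatorname{cca}}_\xi^s((x_n)) > \eta$ may be disjoint, so ``choose $N \in [L \cap M]$'' is not available, and the pointwise inequality $\widetilde{\operatorname{cca}}_\xi^s((x_n)) \leq \widetilde{\operatorname{ca}}((x_n))$ in fact fails. The fix is to argue at the level of the set $A$: given any $M \in [\mathbb{N}]$, the subsequence $(x_n)_{n \in M}$ is itself a sequence in $A$, so $\beta(A) < c$ yields some $N \in [M]$ with $\operatorname{ca}((x_n)_{n \in N}) < c$; your convex-combination argument then correctly gives $\operatorname{cca}(\xi_n^N \cdot (x_k)_k) \leq c$, whence $\widetilde{\operatorname{cca}}_\xi^s((x_n)) \leq c$.
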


To prove the inequality $\operatorname{wck}_X (A) \leq \operatorname{bs}_\xi (A)$ we will need to define an auxiliary quantity $\gamma_0$. For a bounded subset $A$ of a Banach space $X$ we define
\begin{align*}
    \gamma_0(A) = \sup \{|\lim_{m \rightarrow \infty}& \lim_{n \rightarrow \infty} x_m^*(x_n)|: \\
    &(x_m)^*_{m \in \mathbb{N}} \text{ is a weak}^*\text{ null sequence in } B_{X^*},\\
    &(x_n)_{n \in \mathbb{N}} \text{ is a sequence in }A \\
    &\text{and all the involved limits exist} \}.
\end{align*}

The quantity $\gamma_0$ was introduced in \cite{CascalesKalendaSpurny} as a measure of weak compactness in spaces whose duals have weak$^*$ angelic unit balls. Later, it was used \cite{Bendov__2015} to prove a version of Theorem \ref{TheoremBSaWBS} for $\xi = 1$.

\begin{lemma}\label{lemmaGamma0}
    Let $A$ be a bounded subset of a Banach space $X$ and $\xi < \omega_1$. Then
    \begin{align*}
        \gamma_0(A) \leq \operatorname{bs}_\xi(A).
    \end{align*}
\end{lemma}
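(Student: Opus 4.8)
The plan is to unwind the definition of $\gamma_0$ and convert a witnessing pair of sequences into a single sequence in $A$ on which the repeated averages (for every $M$) fail to produce norm-Cauchy Ces\`aro means. Fix $\epsilon>0$ and a weak$^*$ null sequence $(x_m^*)_{m\in\mathbb{N}}$ in $B_{X^*}$ together with a sequence $(x_n)_{n\in\mathbb{N}}$ in $A$ such that $\lim_m\lim_n x_m^*(x_n)=a$ with $|a|$ within $\epsilon$ of $\gamma_0(A)$; replacing $x_m^*$ by $-x_m^*$ if necessary we may assume $a\ge 0$, and if $a=0$ there is nothing to prove, so assume $a>0$. After discarding finitely many indices $m$ we may also assume $|\lim_n x_m^*(x_n)-a|<\epsilon$ for every $m$.

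First I would build, by a standard alternating recursion, indices $\mu_1<\mu_2<\cdots$ and $n_1<n_2<\cdots$ so that the functionals $h_j:=x_{\mu_j}^*$ and the vectors $y_j:=x_{n_j}$ satisfy $|h_j(y_i)|<\epsilon$ for $i<j$ and $|h_j(y_i)-a|<\epsilon$ for $i\ge j$: at step $j$ the first family of inequalities is arranged by choosing $\mu_j$ large, using that $(x_m^*)_{m\in\mathbb{N}}$ is weak$^*$ null (so $x_m^*(x_{n_i})\to 0$ for each of the finitely many already chosen $n_i$), and the second by choosing $n_j$ large, using that $\lim_n x_{\mu_i}^*(x_n)$ exists and lies within $\epsilon$ of $a$ for each already chosen $\mu_i$. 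Thus $h_j$ essentially annihilates $y_1,\dots,y_{j-1}$ and takes value $\approx a$ on $y_j,y_{j+1},\dots$; note that $(y_j)_{j\in\mathbb{N}}$ is a sequence in $A$.

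Then I would fix an arbitrary $M\in[\mathbb{N}]$ and set $z_j=\xi_j^M\cdot(y_k)_{k\in\mathbb{N}}$, which is a convex combination $\sum_{k\in F_j}b_k^j y_k$ of the $y_k$'s with $F_j:=\operatorname{supp}\xi_j^M$ and $F_1<F_2<\cdots$; write $w_N=\frac1N\sum_{j=1}^N z_j$. Given a threshold $n_0$, put $k:=n_0$ and $p:=\max F_k$, and test with $h:=h_{p+1}$. Since $w_k$ is a convex combination of $\{y_q:q\le p\}$, we get $|h(w_k)|<\epsilon$. For $l>k$ the block structure gives that in $w_l$ only the blocks $F_1,\dots,F_k$ reach positions $\le p$, each carrying total mass $1/l$, so $w_l$ puts mass at most $k/l$ on positions $\le p$ and mass at least $1-k/l$ on positions $\ge p+1$, where $h$ takes values in $(a-\epsilon,a+\epsilon)$; hence $h(w_l)\ge a(1-k/l)-\epsilon$, which exceeds $a-2\epsilon$ as soon as $l$ is large. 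Therefore $\norm{w_k-w_l}\ge h(w_l)-h(w_k)>a-3\epsilon$ for these $k,l\ge n_0$, and since $n_0$ was arbitrary $\operatorname{ca}((w_N)_{N\in\mathbb{N}})\ge a-3\epsilon$, i.e. $\operatorname{cca}(\xi_j^M\cdot(y_k)_{k\in\mathbb{N}})\ge a-3\epsilon$. As $M\in[\mathbb{N}]$ was arbitrary, $\widetilde{\operatorname{cca}}_\xi((y_j)_{j\in\mathbb{N}})\ge a-3\epsilon$, so $\operatorname{bs}_\xi(A)\ge a-3\epsilon$; letting $\epsilon\to 0$ and taking the supremum over all witnessing pairs yields $\operatorname{bs}_\xi(A)\ge\gamma_0(A)$.

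The only point requiring care will be the simultaneous bookkeeping in the recursion — checking that the two families of constraints on the $\mu_j$'s and the $n_j$'s are never in conflict — together with the elementary but crucial observation that the Ces\`aro mean $w_l$ concentrates all but at most $k/l$ of its mass above $\max F_k$; the remaining estimates are routine applications of the triangle inequality. This is exactly the higher-order analogue of the argument used for $\xi=1$ in \cite{Bendov__2015}: the only feature of the repeated averages $\xi_j^M$ that is used is that their supports form an increasing sequence of finite blocks.
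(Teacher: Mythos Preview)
Your argument is correct, but it does more work than necessary; the paper's proof is shorter and avoids the alternating recursion entirely. The paper keeps the original sequence $(x_n)_{n\in\mathbb{N}}$ in $A$ and exploits the elementary fact that summability methods preserve limits: if $\lim_k x_j^*(x_k)>c$, then for any $P\in[\mathbb{N}]$ and $y_k:=\frac{1}{k}\sum_{i=1}^k \xi_i^P\cdot(x_n)_{n\in\mathbb{N}}$ one has automatically $\lim_k x_j^*(y_k)=\lim_k x_j^*(x_k)>c$, because each $\xi_i^P\cdot(x_n)_{n\in\mathbb{N}}$ is a convex combination of $x_n$'s with indices tending to infinity. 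Then, given any fixed $k$, weak$^*$ nullness lets one choose $j$ so that $x_j^*(y_k)$ is small, and the previous line lets one choose $l>k$ with $x_j^*(y_l)>c$; no subsequence extraction or block bookkeeping is needed.

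What your approach buys is explicit, uniform control over all the pairings $h_j(y_i)$, which makes the final inequality transparent; the price is the recursion and the mass-counting step. The paper's version trades that for a one-line appeal to ``Ces\`aro means preserve limits,'' which makes the proof shorter but slightly less self-contained. Both arguments use only that the supports of $\xi_j^M$ form increasing finite blocks, so neither depends on anything specific to the ordinal $\xi$. A minor point: in your recursion you assumed $|\lim_n x_m^*(x_n)-a|<\epsilon$ and then claimed $|h_j(y_i)-a|<\epsilon$ for $i\ge j$; strictly speaking you should reserve some room (e.g.\ work with $\epsilon/2$ at the discarding stage) so that passing from the limit to a sufficiently large $n_i$ still lands inside the $\epsilon$-window.
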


\begin{proof}
    Suppose that $\gamma_0(A) > c$ for some $c>0$. Then there is a sequence $(x_k)_{x \in \mathbb{N}}$ in $A$ and a weak$^*$ null sequence $(x_n^*)_{n \in \mathbb{N}}$ such that
    \begin{align*}
        \lim_{j \rightarrow \infty} \lim_{k \rightarrow \infty} x_j^*(x_k) > c.
    \end{align*}
    We can assume without loss of generality that $\lim_{k \rightarrow \infty} x_j^*(x_k) > c$ for all $j \in \mathbb{N}$. Fix $P \in [\mathbb{N}]$ and define, for $k \in \mathbb{N}$,
    \begin{align*}
        y_k = \frac{1}{k} \sum_{j=1}^k \xi_j^P \cdot (x_n)_{n \in \mathbb{N}}.
    \end{align*}
    We want to show that $\operatorname{ca}(y_n) \geq c$. Note that for each $j \in \mathbb{N}$ we have
    \begin{align*}
        \lim_{k \rightarrow \infty} x_j^*(y_k) = \lim_{k \rightarrow \infty} x_j^*(x_k) > c.
    \end{align*}
    Now fix $\epsilon > 0$ and $k \in \mathbb{N}$. Using weak$^*$ nullness of the sequence $(x_j^*)_{j \in \mathbb{N}}$, we can find $j \in \mathbb{N}$ such that $x_j^*(y_k) < \epsilon$. Then we can find $l > k$ such that $x_j^*(y_l) > c$. But then
    \begin{align*}
        \norm{y_l - y_k} \geq x_j^*(y_l-y_k) > c-\epsilon.
    \end{align*}
    As $\epsilon$ and $k$ were chosen arbitrarily, we get that $\operatorname{ca}(y_n) \geq c$. As $P \in [\mathbb{N}]$ was also chosen arbitrarily, we get $\widetilde{\operatorname{cca}}_\xi((x_n)_{n \in \mathbb{N}}) \geq c$, and this implies that $\operatorname{bs}_\xi(A) \geq c$.
\end{proof}

\begin{proof}[Proof of Theorem \ref{TheoremBSaWBS}]
    We first note that the inequality $\operatorname{bs}_\xi(A) \leq \operatorname{bs}_\xi^s(A)$ is trivial. We proceed with the first inequality. That $\operatorname{bs}_\xi(A) \geq \operatorname{wbs}_\xi(A)$ is clear. If $X$ is separable, then the closed unit ball of $X^*$ is metrizable and $\gamma_0(A) = \operatorname{wck}_X(A)$ by \cite[Theorem 6.1.]{CascalesKalendaSpurny}. Hence, for separable $X$ we get the inequality $\operatorname{bs}_\xi(A) \geq \gamma_0(A) = \operatorname{wck}_X(A)$.
    
    If $X$ is arbitrary and $\operatorname{wck}_X(A) > c$ for some $c > 0$, we can find a sequence $(x_k)_{k \in \mathbb{N}}$ in $A$ with $\operatorname{d}(\operatorname{clust}_{X^{**}}(x_k),X) > c$. If we set $Y = \overline{\operatorname{span}} \{x_k: \; k \in \mathbb{N}\}$, then $Y$ is a separable subspace of $X$ and $\operatorname{d}(\operatorname{clust}_{Y^{**}}(x_k),Y) \geq \operatorname{d}(\operatorname{clust}_{X^{**}}(x_k),X)$ (see the proof of \cite[Theorem 3.1.]{Bendov__2015}). Therefore
    \begin{align*}
        \operatorname{d}( \operatorname{clust}_{Y^{**}} (x_k),Y) \geq \operatorname{d}(\operatorname{clust}_{X^{**}}(x_k),X) > c.
    \end{align*}
    It follows that $\operatorname{wck}_Y(A \cap Y) > c$, and therefore
    \begin{align*}
        \operatorname{bs}_\xi(A) \geq \operatorname{bs}_\xi(A \cap Y) \geq \operatorname{wck}_Y(A \cap Y) > c
    \end{align*}
    by the already proved separable case.
    
    The last inequality we need to prove is $\operatorname{bs}^s_\xi(A) \leq \beta(A)$. For this we use to following lemma.
    
    \begin{lemma} \label{LemmaCa->Cca}
        Let $(x_n)_{n \in \mathbb{N}}$ be a bounded sequence in a Banach space $X$ and $\xi < \omega_1$. Let there be $c > 0$ and $N \in [\mathbb{N}]$ such that $\operatorname{ca}((x_n)_{n \in N}) < c$. Then for any $P \in [N]$ we have $\operatorname{cca} (\xi_n^P \cdot (x_k)_{k \in \mathbb{N}}) \leq c$.
    \end{lemma}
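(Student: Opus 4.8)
The plan is to show that the summability method $\xi^P_n$ contracts the $\operatorname{ca}$-oscillation: each vector $\xi^P_n \cdot (x_k)_{k \in \mathbb{N}}$ is a convex combination $\sum_k b^n_k x_k$ with $\operatorname{supp} \xi^P_n \subseteq P \subseteq N$, and the supports march to infinity, so for large $n$ all the indices $k$ appearing in $\xi^P_n$ lie in the tail of $N$ where consecutive terms of $(x_k)_{k \in N}$ differ by less than (essentially) $c$. Concretely, pick $n_0 \in N$ large enough that $\sup\{\norm{x_k - x_l} : k, l \in N, \, k,l \geq n_0\} < c$ (this is possible since $\operatorname{ca}((x_n)_{n \in N}) < c$), and discard the finitely many $\xi^P_n$ whose support meets $\{1, \dots, n_0-1\}$; say this happens for $n \leq n_1$.

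The key step is the estimate, for $n > n_1$, that $\xi^P_n \cdot (x_k)_{k \in \mathbb{N}}$ lies within distance $c$ of the closed convex hull of $\{x_k : k \in N, \, k \geq n_0\}$, hence of any fixed $x_{k_0}$ with $k_0 \in N$, $k_0 \geq n_0$, up to $c$: indeed, since $\sum_k b^n_k = 1$ and $b^n_k \geq 0$, we have
\begin{align*}
    \norm{\xi^P_n \cdot (x_k)_{k \in \mathbb{N}} - x_{k_0}} = \norm{\sum_{k} b^n_k (x_k - x_{k_0})} \leq \sum_k b^n_k \norm{x_k - x_{k_0}} < c,
\end{align*}
because every $k$ in the support satisfies $k \in N$ and $k \geq n_0$. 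Consequently, for all $n, m > n_1$ we get $\norm{\xi^P_n \cdot (x_k)_{k \in \mathbb{N}} - \xi^P_m \cdot (x_k)_{k \in \mathbb{N}}} < 2c$ by the triangle inequality; but more importantly, the Ces\`aro averages $\frac{1}{s}\sum_{n=1}^s \xi^P_n \cdot (x_k)_{k \in \mathbb{N}}$ also, for $s$ large, concentrate near that same region (the finitely many early terms wash out), so the oscillation of these averages is at most $c$ in the limit. To be careful about the constant, one runs the argument with $c$ replaced by any $c' \in (\operatorname{ca}((x_n)_{n \in N}), c)$ and notes that two tail Ces\`aro averages $y_s, y_t$ with $s, t$ large satisfy $\norm{y_s - y_t} \leq c'$ up to a vanishing error coming from the bounded initial block $\sum_{n \leq n_1}$ scaled by $1/s$; letting $s, t \to \infty$ gives $\operatorname{cca}(\xi^P_n \cdot (x_k)_{k \in \mathbb{N}}) = \operatorname{ca}\big(\frac{1}{s}\sum_{n=1}^s \xi^P_n \cdot (x_k)_{k \in \mathbb{N}}\big) \leq c' < c$. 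Taking the infimum over admissible $c'$ yields $\operatorname{cca}(\xi_n^P \cdot (x_k)_{k \in \mathbb{N}}) \leq \operatorname{ca}((x_n)_{n \in N}) < c$, which is even slightly stronger than claimed.

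The main obstacle is purely bookkeeping: one must use that $(\xi^P_n)_{n \in \mathbb{N}}$ is a $P$-summability method with $\operatorname{supp}\xi^P_n < \operatorname{supp}\xi^P_{n+1}$ and $\bigcup_n \operatorname{supp}\xi^P_n = P \subseteq N$, so that only finitely many $\xi^P_n$ have support intersecting any fixed initial segment of $N$, and then control the contribution of those finitely many terms to the Ces\`aro average by $O(1/s)$ using boundedness of $(x_n)_{n \in \mathbb{N}}$. No properties of the Schreier hierarchy or the recursive construction of the Repeated Averages are needed beyond these structural facts, and in particular the argument is uniform in $\xi$ and in $P \in [N]$.
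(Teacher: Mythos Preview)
Your proof is correct and follows essentially the same route as the paper: both observe that for large $n$ the vectors $\xi^P_n\cdot(x_k)$ are convex combinations of tail elements $\{x_k:k\in N,\ k\geq n_0\}$, a set of diameter at most $c$, and then pass to Ces\`aro means. The only difference is that the paper first records $\operatorname{ca}\big(\xi^P_n\cdot(x_k)\big)\leq c$ and then invokes \cite[Lemma~3.4]{Bendov__2015} for the implication $\operatorname{ca}\leq c\Rightarrow\operatorname{cca}\leq c$, whereas you argue this step directly (and in doing so obtain the marginally sharper bound $\operatorname{cca}\leq\operatorname{ca}((x_n)_{n\in N})$); one small wording slip is that $\xi^P_n\cdot(x_k)$ lies \emph{in} the convex hull, not merely within distance $c$ of it, and it is the diameter of that hull---not proximity to a single anchor $x_{k_0}$---that gives the constant $c$ rather than $2c$ for the Ces\`aro oscillation.
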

    
    \begin{proof}
        As $\operatorname{ca}((x_n)_{n \in N}) < c$, we can find $n_0 \in \mathbb{N}$ such that
        \begin{align*}
            \norm{x_n - x_m} \leq c, \hspace{2cm} \text{for } n,m \in N \text{ and } n,m > n_0.
        \end{align*}
        We define $y_n = \xi_n^P \cdot (x_k)_{k \in \mathbb{N}}$ for $n \in \mathbb{N}$. Note that
        \begin{align*}
            \norm{y_n-y_m} \leq c, \hspace{2cm} \text{for } n,m > n_0.
        \end{align*}
        To prove it we notice that $\norm{x_n - y_m} \leq c$ for each $n,m > n_0$, $n \in N$ as such $y_m$ is a convex combination of elements $x_j$'s for which $\norm{x_n-x_j} \leq c$. Hence, for $n>n_0$ we have that $y_n$ is a convex combination of elements $x_j$'s for which $\norm{x_j-y_m} \leq c$ for each $m>n_0$, and thus also $\norm{y_n-y_m} \leq c$ for each $m > n_0$. Hence, $\operatorname{ca}((y_n)_{n \in \mathbb{N}}) \leq c$ and by \cite[Lemma 3.4.]{Bendov__2015} $\operatorname{cca} (\xi_n^P \cdot (x_k)_{k \in \mathbb{N}}) = \operatorname{cca}(y_n) \leq c$.
    \end{proof}
    The only inequality left is $\beta(A) \geq \operatorname{bs}_\xi^s(A)$. Let $\beta(A) < c$ for some $c>0$ and take an arbitrary sequence $(x_n)_{n \in \mathbb{N}}$ in $A$. What we want to show is $\widetilde{\operatorname{cca}}_\xi^s((x_n)_{n \in \mathbb{N}}) \leq c$. Let $M \in [\mathbb{N}]$ be arbitrary, then we can find $N \in [M]$ such that $\operatorname{ca}((x_n)_{n \in M}) < c$. It then follows from Lemma \ref{LemmaCa->Cca} that for any $P \in [N]$ we have $\operatorname{cca}(\xi_n^P \cdot (x_k)_{k \in \mathbb{N}}) \leq c$. In particular, $\operatorname{cca}(\xi_n^N \cdot (x_k)_{k \in \mathbb{N}}) \leq c$. As $M$ was arbitrary, $\widetilde{\operatorname{cca}}_\xi^s((x_n)_{n \in \mathbb{N}}) \leq c$. As $(x_n)_{n \in \mathbb{N}}$ was also chosen arbitrarily, $\operatorname{bs}_\xi^s(A) \leq c$ and we are done. Thus, Theorem \ref{TheoremBSaWBS} is proved.
\end{proof}

In the following propositions we show the converse to the second implication mentioned at the beginning of this section, that is that a relatively weakly compact weak $\xi$-Banach-Saks set is a $\xi$-Banach-Saks set. As mentioned, this implication cannot be fully quantified.

\begin{prop}\label{PropCharBS}
    Let $A$ be relatively weakly compact subset of a Banach space $X$ and $\xi < \omega_1$. Then $\operatorname{wbs}_\xi^s(A) = \operatorname{bs}_\xi^s (A)$.
\end{prop}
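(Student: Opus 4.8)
The plan is to prove the two inequalities $\operatorname{wbs}_\xi^s(A) \le \operatorname{bs}_\xi^s(A)$ and $\operatorname{bs}_\xi^s(A) \le \operatorname{wbs}_\xi^s(A)$ separately. The first is immediate, since every weakly convergent sequence in $A$ is in particular a sequence in $A$, so the supremum defining $\operatorname{wbs}_\xi^s(A)$ is taken over a subfamily of the sequences used for $\operatorname{bs}_\xi^s(A)$; hence $\operatorname{wbs}_\xi^s(A) \le \operatorname{bs}_\xi^s(A)$ holds with no use of weak compactness. The substance is the reverse inequality, and here relative weak compactness of $A$ enters decisively.

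For the reverse inequality, fix an arbitrary sequence $(x_n)_{n \in \mathbb{N}}$ in $A$; I want to bound $\widetilde{\operatorname{cca}}_\xi^s((x_n)_{n\in\mathbb{N}})$ by $\operatorname{wbs}_\xi^s(A)$. Since $A$ is relatively weakly compact, by the Eberlein–Šmulian theorem the sequence $(x_n)_{n \in \mathbb{N}}$ has a weakly convergent subsequence $(x_n)_{n \in M_0}$, converging to some $x \in \overline{A}^w$. The key point is that $\widetilde{\operatorname{cca}}_\xi^s$ only gets smaller when we pass to a subsequence — more precisely, from the definition
\begin{align*}
    \widetilde{\operatorname{cca}}_\xi^s((x_n)_{n\in\mathbb{N}}) = \sup_{M \in [\mathbb{N}]} \inf_{N \in [M]} \operatorname{cca}(\xi_n^N \cdot (x_k)_{k\in\mathbb{N}})
\end{align*}
one checks, using that $[M_0] \subseteq [\mathbb{N}]$ and that the inner infimum over $[M]$ for $M \subseteq M_0$ ranges over a subfamily, that the value computed along $(x_n)_{n \in \mathbb{N}}$ equals the supremum over $M \in [\mathbb{N}]$ of the inner infima, and restricting the outer supremum to $M \subseteq M_0$ can only decrease it; but in fact we need the opposite direction: I would argue that $\widetilde{\operatorname{cca}}_\xi^s$ of the full sequence is \emph{attained} (in the supremum sense) along subsequences that are weakly convergent, because given any $M \in [\mathbb{N}]$ witnessing a value close to the supremum, one passes to a weakly convergent subsequence $M' \in [M]$ by Eberlein–Šmulian, and $\inf_{N \in [M']} \operatorname{cca}(\xi_n^N\cdot(x_k)_k) \ge \inf_{N \in [M]} \operatorname{cca}(\xi_n^N\cdot(x_k)_k)$ since $[M'] \subseteq [M]$. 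Thus
\begin{align*}
    \widetilde{\operatorname{cca}}_\xi^s((x_n)_{n\in\mathbb{N}}) = \sup\{\widetilde{\operatorname{cca}}_\xi^s((x_n)_{n\in M'}): M' \in [\mathbb{N}],\ (x_n)_{n \in M'} \text{ weakly convergent}\},
\end{align*}
and each term on the right is at most $\operatorname{wbs}_\xi^s(A)$ since $(x_n)_{n\in M'}$ is a weakly convergent sequence in $A$. Taking the supremum over all $(x_n)_{n \in \mathbb{N}}$ in $A$ gives $\operatorname{bs}_\xi^s(A) \le \operatorname{wbs}_\xi^s(A)$.

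I expect the main obstacle to be the bookkeeping in the displayed identity for $\widetilde{\operatorname{cca}}_\xi^s$ above: one must verify carefully that the outer supremum over $M \in [\mathbb{N}]$ can be replaced by a supremum over those $M$ along which $(x_n)$ converges weakly, which hinges on the monotonicity $[M'] \subseteq [M]$ making the inner infimum non-decreasing under passing to sub-infinite-subsets, combined with Eberlein–Šmulian applied inside an arbitrary $M$. Once this reduction is in place the estimate is automatic. A secondary point to be careful about: the quantity $\widetilde{\operatorname{cca}}_\xi^s$ is defined for the sequence $(x_n)_{n \in \mathbb{N}}$ via the summability operators $\xi_n^N$ acting on the full indexing $(x_k)_{k \in \mathbb{N}}$, so when restricting to a subsequence one should note that $\xi_n^N \cdot (x_k)_{k \in \mathbb{N}}$ for $N \subseteq M'$ only involves coordinates $x_k$ with $k \in M'$, so it genuinely depends only on the subsequence; this is what legitimizes identifying $\inf_{N\in[M']}\operatorname{cca}(\xi_n^N\cdot(x_k)_k)$ with $\widetilde{\operatorname{cca}}_\xi^s$ computed along $(x_n)_{n\in M'}$ itself.
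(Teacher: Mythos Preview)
Your overall architecture is the same as the paper's: the trivial inequality $\operatorname{wbs}_\xi^s(A)\le\operatorname{bs}_\xi^s(A)$, then Eberlein--\v{S}mulian inside a given $M$ to produce a weakly convergent subsequence, then an appeal to the definition of $\operatorname{wbs}_\xi^s(A)$. The monotonicity observation $\inf_{N\in[M']}\ge\inf_{N\in[M]}$ is also exactly what the paper uses (implicitly). So the strategy is right.

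There is, however, a genuine gap in your final step. You assert that $\inf_{N\in[M']}\operatorname{cca}(\xi_n^N\cdot(x_k)_k)$ can be ``identified with $\widetilde{\operatorname{cca}}_\xi^s$ computed along $(x_n)_{n\in M'}$,'' on the grounds that for $N\subseteq M'$ the expression $\xi_n^N\cdot(x_k)_k$ only involves $x_k$ with $k\in M'$. The second claim is true, but it does \emph{not} justify the identification. The quantity $\widetilde{\operatorname{cca}}_\xi^s$ of the subsequence is, by definition, computed after re-indexing $(x_n)_{n\in M'}$ as a sequence $(y_j)_{j\in\mathbb{N}}$ with $y_j=x_{m'_j}$, and then applying the Repeated Averages $\xi_n^Q$ for $Q\in[\mathbb{N}]$ to $(y_j)_j$. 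But the Repeated Averages depend on the actual values of the indices (through the ``$\min\operatorname{supp}$'' clauses in their recursive definition), not merely on their relative order; so $\xi_n^Q\cdot(y_j)_j$ is \emph{not} equal to $\xi_n^{\phi(Q)}\cdot(x_k)_k$ under the bijection $\phi:\mathbb{N}\to M'$. Consequently there is no direct comparison between $\inf_{N\in[M']}\operatorname{cca}(\xi_n^N\cdot(x_k)_k)$ and $\widetilde{\operatorname{cca}}_\xi^s((y_j)_j)$, and your bound ``each term on the right is at most $\operatorname{wbs}_\xi^s(A)$'' is unjustified.

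The paper repairs exactly this point with a small but essential trick: rather than re-indexing, it \emph{extends} the weakly convergent subsequence to a full $\mathbb{N}$-indexed sequence in $A$ that is still weakly convergent and coincides with $(x_k)$ on $M'$. Concretely, with $M'=(m'_k)_k$, set $y_k=x_k$ for $k\in M'$ and $y_k=x_{m'_k}$ for $k\notin M'$; one checks that $(y_k)_{k\in\mathbb{N}}$ is weakly convergent (every $y_k$ equals some $x_j$ with $j\in M'$ and $j\ge k$). Now $(y_k)$ is a bona fide weakly convergent sequence in $A$ indexed by $\mathbb{N}$, so $\widetilde{\operatorname{cca}}_\xi^s((y_k))\le\operatorname{wbs}_\xi^s(A)$; taking the outer set equal to $M'$ gives $\inf_{L\in[M']}\operatorname{cca}(\xi_n^L\cdot(y_k)_k)\le\operatorname{wbs}_\xi^s(A)$, and since $y_k=x_k$ on $M'\supseteq L$ this is literally $\inf_{L\in[M']}\operatorname{cca}(\xi_n^L\cdot(x_k)_k)$. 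That closes the gap and yields $\widetilde{\operatorname{cca}}_\xi^s((x_n))\le\operatorname{wbs}_\xi^s(A)$ as desired. Your argument becomes correct once you insert this extension step in place of the re-indexing.
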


\begin{proof}
    For any bounded set $A$ we have $\operatorname{wbs}_\xi^s (A) \leq \operatorname{bs}_\xi^s (A)$. For the converse, let $(x_n)_{n \in \mathbb{N}}$ be a sequence in $A$ and $M \in [\mathbb{N}]$. As $A$ is relatively weakly compact, we can use the Eberlein-Šmulyan theorem to find $N = (n_k)_{k \in \mathbb{N}} \in [M]$ such that $(x_k)_{k \in N}$ is weakly convergent to some $x \in X$. Denote by $N^c = \mathbb{N} \setminus N$. We define $y_k = x_{n_k}$, for $k \in N^c$, and $y_k = x_k$, for $k \in N$. Then $(y_k)_{k \in \mathbb{N}}$ is a sequence in $A$ weakly converging to $x$. Hence, for any $\epsilon > 0$ there is $L_\epsilon \in [N]$ such that $\operatorname{cca} (\xi_n^{L_\epsilon} \cdot (y_k)_{k \in \mathbb{N}}) \leq \operatorname{wbs}_\xi^s (A) + \epsilon$. But $\xi_n^{L_\epsilon} \cdot (y_k)_{k \in \mathbb{N}} = \xi_n^{L_\epsilon} \cdot (x_k)_{k \in \mathbb{N}}$, as $y_k = x_k$ for $k \in L_\epsilon \subseteq N$. Thus, as $M \in [\mathbb{N}]$ and $\epsilon > 0$ were arbitrary, we have shown that $\widetilde{\operatorname{cca}}_\xi^s ((x_n)_{n \in \mathbb{N}}) \leq \operatorname{wbs}_\xi^s (A)$. As $(x_n)_{n \in \mathbb{N}}$ was arbitrary, we get $\operatorname{bs}_\xi^s (A) \leq \operatorname{wbs}_\xi^s (A)$.
\end{proof}

We can use the same trick (that is replacing a bounded sequence $(x_n)_{n \in \mathbb{N}}$ with a weakly convergent sequence $(y_n)_{n \in \mathbb{N}}$ as in the proof of Proposition \ref{PropCharBS}) to prove the promised converse to the second implication mentioned at the beginning of this section as well as an analogue of Proposition \ref{PropWBSEkvivalentniDef} for the $\xi$-Banach-Saks property.

\begin{prop}\label{PropEqBsWbs}
    Let $\xi < \omega_1$ and $A$ be a bounded set in a Banach space $X$. Then the following are equivalent:
    \begin{enumerate}[(i)]
        \item $A$ is a $\xi$-Banach-Saks set;
        \item For every sequence $(x_n)_{n \in \mathbb{N}}$ in $A$ and every $M \in [\mathbb{N}]$ there is $L \in [M]$ such that for all $P \in [L]$ the sequence $(x_n)_{n \in \mathbb{N}}$ is $(P,\xi)$-summable;
        \item $A$ is a relatively weakly compact weak $\xi$-Banach-Saks set.
    \end{enumerate}
\end{prop}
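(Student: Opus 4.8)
The plan is to establish the cycle of implications $(i) \Rightarrow (iii) \Rightarrow (ii) \Rightarrow (i)$, or more naturally $(i) \Leftrightarrow (ii)$ together with $(i) \Leftrightarrow (iii)$. Several of the pieces are already available: the implication $(ii) \Rightarrow (i)$ is immediate (take $M = \mathbb{N}$, $L = P$); and the fact that a $\xi$-Banach-Saks set is a relatively weakly compact weak $\xi$-Banach-Saks set follows from combining the quantitative results above, since $\operatorname{bs}_\xi(A) = 0$ forces $\operatorname{wck}_X(A) = 0$ and $\operatorname{wbs}_\xi(A) = 0$ by Theorem~\ref{TheoremBSaWBS}, hence relative weak compactness and (via Proposition~\ref{PropWBS}) the weak $\xi$-Banach-Saks property. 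So the real content is the reverse implication $(iii) \Rightarrow (ii)$: a relatively weakly compact weak $\xi$-Banach-Saks set satisfies the strong subsequence-summability property.

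For $(iii) \Rightarrow (ii)$ I would reuse the trick from the proof of Proposition~\ref{PropCharBS}. Given a sequence $(x_n)_{n\in\mathbb{N}}$ in $A$ and $M \in [\mathbb{N}]$, use relative weak compactness and the Eberlein--Šmulian theorem to extract $N = (n_k)_{k\in\mathbb{N}} \in [M]$ with $(x_{n_k})_k$ weakly convergent to some $x \in X$. Build the ``mixed'' sequence $(y_k)$ that agrees with $(x_k)$ on $N$ and is a reindexing of $(x_{n_k})$ off $N$, so that $(y_k)$ is a weakly convergent sequence in $A$. By Proposition~\ref{PropWBSEkvivalentniDef} applied to the weak $\xi$-Banach-Saks set $A$ (with the infinite set $N$), there is $L \in [N]$ such that for all $P \in [L]$ the sequence $(y_k)$ is $(P,\xi)$-summable. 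Since $L \subseteq N$, we have $y_k = x_k$ for $k \in L$, hence $\xi_n^P \cdot (y_k)_{k} = \xi_n^P \cdot (x_k)_k$ for every $P \in [L] \subseteq [N]$, so $(x_n)$ is $(P,\xi)$-summable for all $P \in [L]$. As $M$ was arbitrary this gives $(ii)$.

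Finally, $(iii) \Rightarrow (i)$ is a weakening of $(iii) \Rightarrow (ii)$ (or can be read off directly, taking $M = \mathbb{N}$, $P = L$), and $(i) \Rightarrow (iii)$ was sketched above using Theorem~\ref{TheoremBSaWBS} and Proposition~\ref{PropWBS}; alternatively $(i) \Rightarrow (ii)$ follows from Proposition~\ref{PropWBSEkvivalentniDef} combined with the already-established $(i) \Rightarrow (iii)$ (so that every sequence in $A$ is weakly convergent up to a subsequence and Proposition~\ref{PropWBSEkvivalentniDef}(a) applies after the same mixing trick). I would organize the write-up as: $(i) \Rightarrow (ii)$ via the mixing argument plus Proposition~\ref{PropWBSEkvivalentniDef}; $(ii) \Rightarrow (i)$ trivially; $(i) \Rightarrow (iii)$ via Theorem~\ref{TheoremBSaWBS}, Proposition~\ref{PropWBS}; and $(iii) \Rightarrow (i)$ by the same mixing argument.

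The main obstacle is purely bookkeeping: one must check that the relabelling of $(x_n)$ into the weakly convergent $(y_n)$ does not disturb the repeated-average vectors $\xi_n^P$ for $P$ living inside the ``good'' part $N$ — this is exactly why the mixed sequence is defined to coincide with the original on $N$, and it hinges on $\xi_n^P$ depending only on $P$, not on the ambient enumeration. There is no deep difficulty beyond faithfully transcribing the Proposition~\ref{PropCharBS}/Proposition~\ref{PropWBSEkvivalentniDef} mechanism; the quantitative Theorem~\ref{TheoremBSaWBS} does all the heavy lifting for the compactness direction.
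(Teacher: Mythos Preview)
Your proposal is correct and follows essentially the same route as the paper: $(ii)\Rightarrow(i)$ trivially, $(i)\Rightarrow(iii)$ via the easy observation that a $\xi$-Banach-Saks set has $\operatorname{bs}_\xi(A)=0$ together with Theorem~\ref{TheoremBSaWBS}, and $(iii)\Rightarrow(ii)$ via the Eberlein--\v{S}mulian plus ``mixed sequence'' trick from Proposition~\ref{PropCharBS} combined with Proposition~\ref{PropWBSEkvivalentniDef}. The only cosmetic difference is that the paper gets the weak $\xi$-Banach-Saks part of $(iii)$ directly from the definition (every weakly convergent sequence is a sequence) rather than via $\operatorname{wbs}_\xi(A)=0$ and Proposition~\ref{PropWBS}, but your route works just as well.
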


\begin{proof}
    If $A$ ia a $\xi$-Banach-Saks set, then it is trivially a weak $\xi$-Banach-Saks set. Further $\operatorname{bs}_\xi(A) = 0$, and thus $A$ is relatively weakly compact by Theorem \ref{TheoremBSaWBS}. Hence, (i) implies (iii). Clearly, (ii) implies (i).
    
    What is left is the implication (iii) implies (ii). Let us suppose that $A$ is a relatively weakly compact weak $\xi$-Banach-Saks set. Let $(x_n)_{n \in \mathbb{N}}$ be a sequence in $A$ and $M \in[ \mathbb{N}]$. It follows from the Eberlein-Šmulyan theorem that there is $N \in [M]$ such that $(x_n)_{n \in N}$ is weakly convergent. We define the sequence $(y_n)_{n \in \mathbb{N}}$ in exactly the same way as in the proof of Proposition \ref{PropCharBS}. Then $(y_n)_{n \in \mathbb{N}}$ is a weakly convergent sequence in the weak $\xi$-Banach-Saks set $A$, and thus by Proposition \ref{PropWBSEkvivalentniDef} there is $L \in [N]$ such that for all $P \in [L]$ the sequence $(y_n)_{n \in \mathbb{N}}$ is $(P,\xi)$-summable. But then again we have $x_k = y_k$ for $k \in L$, and therefore the sequence $(x_n)_{n \in \mathbb{N}}$ is also $(P,\xi)$-summable. Hence, we have found for any sequence $(x_n)_{n \in \mathbb{N}}$ in $A$ and $M \in [\mathbb{N}]$ a further subset $L \in [M]$ such that for all $P \in [L]$ the sequence $(x_n)_{n \in \mathbb{N}}$ is $(P,\xi)$-summable and (ii) holds.
\end{proof}

In the following proposition we prove that both of the quantities $\operatorname{bs}_\xi$ and $\operatorname{bs}_\xi^s$ quantify the $\xi$-Banach-Saks property.

\begin{prop}\label{PropBS}
    Let $A$ be a bounded set in a Banach space $X$ and $\xi < \omega_1$. Then $A$ is a $\xi$-Banach-Saks set, if and only if $\operatorname{bs}_\xi (A) = 0$, if and only if $\operatorname{bs}_\xi^s(A) = 0$.
\end{prop}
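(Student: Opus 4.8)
The plan is to deduce everything from the results already established, chaining the equivalences so that the three conditions are shown equivalent via a cycle. Since $\operatorname{bs}_\xi(A) \leq \operatorname{bs}_\xi^s(A)$ trivially, the implication ``$\operatorname{bs}_\xi^s(A) = 0 \Rightarrow \operatorname{bs}_\xi(A) = 0$'' is free, so it suffices to prove (1) $\operatorname{bs}_\xi(A) = 0 \Rightarrow A$ is a $\xi$-Banach-Saks set, and (2) $A$ is a $\xi$-Banach-Saks set $\Rightarrow \operatorname{bs}_\xi^s(A) = 0$.

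For (1), I would start from $\operatorname{bs}_\xi(A) = 0$. By Theorem \ref{TheoremBSaWBS} we have $\operatorname{wck}_X(A) \leq \operatorname{bs}_\xi(A) = 0$, so $A$ is relatively weakly compact, and also $\operatorname{wbs}_\xi(A) \leq \operatorname{bs}_\xi(A) = 0$. By Proposition \ref{PropWBS}, $\operatorname{wbs}_\xi(A) = 0$ means $A$ is a weak $\xi$-Banach-Saks set. Thus $A$ is a relatively weakly compact weak $\xi$-Banach-Saks set, and the implication (iii)$\Rightarrow$(i) of Proposition \ref{PropEqBsWbs} gives that $A$ is a $\xi$-Banach-Saks set.

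For (2), suppose $A$ is a $\xi$-Banach-Saks set. By Proposition \ref{PropEqBsWbs} (or directly, since a $\xi$-Banach-Saks set is obviously a weak $\xi$-Banach-Saks set and has $\operatorname{bs}_\xi(A)=0$, hence is relatively weakly compact by Theorem \ref{TheoremBSaWBS}), $A$ is a relatively weakly compact weak $\xi$-Banach-Saks set. By Proposition \ref{PropWBS} we get $\operatorname{wbs}_\xi(A) = 0$. Now invoke the chain of inequalities (\ref{star}) from Theorem \ref{TheoremQuantBS}: from $2\operatorname{sm}_{\xi+1}(A) \leq \operatorname{wbs}_\xi(A) = 0$ we obtain $\operatorname{sm}_{\xi+1}(A) = 0$, and then $\operatorname{wbs}_\xi^s(A) \leq 4\operatorname{sm}_{\xi+1}(A) = 0$. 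Finally, since $A$ is relatively weakly compact, Proposition \ref{PropCharBS} yields $\operatorname{bs}_\xi^s(A) = \operatorname{wbs}_\xi^s(A) = 0$, completing the cycle.

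The only point that requires a little care — and the one I would flag as the ``main obstacle'' — is passing from $\operatorname{wbs}_\xi(A) = 0$ to $\operatorname{wbs}_\xi^s(A) = 0$: this does not follow from the trivial inequality $\operatorname{wbs}_\xi \leq \operatorname{wbs}_\xi^s$ and instead genuinely uses the nontrivial equivalence of these quantities, i.e. the detour through $\operatorname{sm}_{\xi+1}$ provided by Theorem \ref{TheoremQuantBS}. Everything else is bookkeeping with the previously proved propositions.
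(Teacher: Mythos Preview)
Your proof is correct and follows essentially the same route as the paper's own argument: the same cycle of implications, the same appeals to Theorem~\ref{TheoremBSaWBS}, Propositions~\ref{PropWBS}, \ref{PropCharBS}, \ref{PropEqBsWbs}, and the same use of Theorem~\ref{TheoremQuantBS} to pass from $\operatorname{wbs}_\xi(A)=0$ to $\operatorname{wbs}_\xi^s(A)=0$. The paper simply cites ``Proposition~\ref{PropWBS} and Theorem~\ref{TheoremQuantBS}'' at that step, whereas you spell out the detour through $\operatorname{sm}_{\xi+1}$ explicitly; this is the only difference, and it is purely expository.
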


\begin{proof}
    If $\operatorname{bs}_\xi^s(A) = 0$, then trivially $\operatorname{bs}_\xi(A) = 0$. If $\operatorname{bs}_\xi (A) = 0$, we get by Theorem \ref{TheoremBSaWBS} and Proposition \ref{PropWBS} that $A$ is a relatively weakly compact weak $\xi$-Banach-Saks set, and thus $A$ is a $\xi$-Banach-Saks set by Proposition \ref{PropEqBsWbs}. Now suppose that $A$ is a $\xi$-Banach-Saks set. Then by Proposition \ref{PropEqBsWbs} $A$ is a relatively weakly compact weak $\xi$-Banach-Saks set. Therefore, $\operatorname{wbs}_\xi^s(A) = 0$ by Proposition \ref{PropWBS} and Theorem \ref{TheoremQuantBS}. Hence, $\operatorname{bs}_\xi^s(A) = 0$ by Proposition \ref{PropCharBS}.
\end{proof}

\section{The quantities as functions of $\xi$} \label{section:QuantitiesAsFunctions}

In this section we will analyse the functions $\operatorname{bs}^s_\xi(A)$, $\operatorname{wbs}^s_\xi(A)$, $\operatorname{wus}_\xi(A)$ and $\operatorname{sm}_\xi (A)$ for a fixed bounded subset $A$ of a Banach space $X$ as functions of $\xi$. We begin with the quantities $\operatorname{wus}_\xi$ and $\operatorname{sm}_\xi$ and prove the simple observation that they are non-increasing.

\begin{lemma} \label{LemmaMonoWUSSM}
    Let $A$ be a bounded subset of a Banach space $X$ and let $\zeta < \xi < \omega_1$ be ordinals. Then $\operatorname{wus}_\xi (A) \leq \operatorname{wus}_\zeta (A)$ and $\operatorname{sm}_\xi (A) \leq \operatorname{sm}_\zeta (A)$.
\end{lemma}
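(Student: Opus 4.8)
The plan is to reduce both inequalities to the behaviour of the Schreier families and the Repeated Averages under increasing $\xi$, using the following key fact recorded in the excerpt (citing \cite[Lemma 2.1.8.(a)]{ArgyrosMercourakusTsarpalias}): if $\zeta < \xi$, then $\mathcal{S}_\zeta$ contains all sets of $\mathcal{S}_\xi$ whose minimal element is sufficiently large. More precisely, for any $\zeta < \xi < \omega_1$ there is $N_0 \in [\mathbb{N}]$ such that $\mathcal{S}_\xi^{N_0} \subseteq \mathcal{S}_\zeta$, and hence, by the spreading property, $\mathcal{S}_\xi^{N} \subseteq \mathcal{S}_\zeta$ for every $N \in [N_0]$. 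The two statements $\operatorname{wus}_\xi(A) \leq \operatorname{wus}_\zeta(A)$ and $\operatorname{sm}_\xi(A) \leq \operatorname{sm}_\zeta(A)$ should then each follow by taking a witnessing sequence for the left-hand quantity, passing to a subsequence indexed by an appropriate $N \in [N_0]$, and checking that the relevant property (being $(\zeta,c)$-large, resp. generating an $\ell_1^\zeta$-spreading model) is inherited.

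First I would handle $\operatorname{sm}_\xi(A) \leq \operatorname{sm}_\zeta(A)$, which is the more direct of the two. Fix $c < \operatorname{sm}_\xi(A)$ and take a sequence $(x_n)_{n \in \mathbb{N}}$ in $A$ weakly convergent to some $x$ such that $(x_n - x)_{n \in \mathbb{N}}$ generates an $\ell_1^\xi$-spreading model with constant $c$. Let $N_0 \in [\mathbb{N}]$ satisfy $\mathcal{S}_\xi^{N_0} \subseteq \mathcal{S}_\zeta$, write $N_0 = (k_n)_{n \in \mathbb{N}}$, and consider the subsequence $(x_{k_n})_{n \in \mathbb{N}}$, which still lies in $A$ and still converges weakly to $x$. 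For $F \in \mathcal{S}_\zeta$ and scalars $(\alpha_i)_{i \in F}$, the set $G = \{k_i : i \in F\}$ belongs to $\mathcal{S}_\xi^{N_0}$ provided $F \in \mathcal{S}_\xi$; but we need $G$ built from $F \in \mathcal{S}_\zeta$. Here I would instead argue in the other direction: since $\mathcal{S}_\xi^{N_0} \subseteq \mathcal{S}_\zeta$ is the wrong inclusion for this, I would rather use that $\mathcal{S}_\zeta^{N_0} \subseteq \mathcal{S}_\xi$ holds whenever $\zeta < \xi$ up to passing to a tail — wait, that is also not automatic. The correct tool is precisely \cite[Lemma 2.1.8.(a)]{ArgyrosMercourakusTsarpalias}: for $\zeta < \xi$, every $F \in \mathcal{S}_\zeta$ with $\min F$ large enough lies in $\mathcal{S}_\xi$. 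So take $N_0$ witnessing that $F \in \mathcal{S}_\zeta$, $F \subseteq N_0$ implies $F \in \mathcal{S}_\xi$; then for the subsequence $(x_n)_{n \in N_0}$, rewritten as $(y_m)_{m \in \mathbb{N}}$ with $y_m = x_{k_m}$, and any $F \in \mathcal{S}_\zeta$ with scalars $(\alpha_i)_{i \in F}$, the spread set $G = \{k_i : i \in F\} \subseteq N_0$ lies in $\mathcal{S}_\zeta$ by the spreading property and hence in $\mathcal{S}_\xi$, so $\|\sum_{i \in F}\alpha_i (y_i - x)\| = \|\sum_{i \in G}\alpha_i (x_i - x)\| \geq c\sum_{i\in G}|\alpha_i| = c\sum_{i \in F}|\alpha_i|$. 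Thus $(y_m - x)$ generates an $\ell_1^\zeta$-spreading model with constant $c$, so $\operatorname{sm}_\zeta(A) \geq c$; letting $c \uparrow \operatorname{sm}_\xi(A)$ finishes this half.

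For $\operatorname{wus}_\xi(A) \leq \operatorname{wus}_\zeta(A)$ I would run the same scheme on the combinatorial side. Fix $c < \operatorname{wus}_\xi(A)$ and a sequence $(x_n)_{n \in \mathbb{N}}$ in $A$, weakly convergent to $x$, that is $(\xi,c)$-large, i.e.\ there is $M \in [\mathbb{N}]$ with $\mathcal{S}_\xi^M \subseteq \mathcal{F}_c((x_n - x)_{n \in \mathbb{N}})$. Choose, by \cite[Lemma 2.1.8.(a)]{ArgyrosMercourakusTsarpalias}, a set $N_0 \in [M]$ such that $F \in \mathcal{S}_\zeta$, $F \subseteq N_0$ implies $F \in \mathcal{S}_\xi$. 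Then for any $F \in \mathcal{S}_\zeta^{N_0}$ we have $F \subseteq N_0$ and $F \in \mathcal{S}_\zeta$ (by the spreading property, since $\mathcal{S}_\zeta^{N_0} \subseteq \mathcal{S}_\zeta$), hence $F \in \mathcal{S}_\xi$; moreover $F \subseteq M$, so $F \in \mathcal{S}_\xi^M$ once we note $F$ arises as a spread of a set in $\mathcal{S}_\xi$ inside $M$ — again the spreading property gives $\mathcal{S}_\xi[M] \cap \{F : F \subseteq N_0\}$ but to land in $\mathcal{S}_\xi^M$ I use that $\mathcal{S}_\xi^M$ already contains every subset of $M$ that is a spread of an $\mathcal{S}_\xi$-set, which it is. Consequently $\mathcal{S}_\zeta^{N_0} \subseteq \mathcal{S}_\xi^M \subseteq \mathcal{F}_c((x_n - x)_{n \in \mathbb{N}})$, so $(x_n)_{n \in \mathbb{N}}$ is $(\zeta,c)$-large, giving $\operatorname{wus}_\zeta(A) \geq c$; letting $c \uparrow \operatorname{wus}_\xi(A)$ concludes.

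The main obstacle I anticipate is bookkeeping the two slightly different family-inclusion relations ($\mathcal{S}_\xi^M$ versus $\mathcal{S}_\xi[M]$, and the non-monotonicity of $\xi \mapsto \mathcal{S}_\xi$) and making sure the tail-passage from \cite[Lemma 2.1.8.(a)]{ArgyrosMercourakusTsarpalias} is applied in the correct direction; once the inclusion $\mathcal{S}_\zeta^{N_0} \subseteq \mathcal{S}_\xi^M$ (for a suitable $N_0 \subseteq M$) is nailed down, both inequalities are immediate. An alternative, cleaner route would be to invoke Theorem \ref{TheoremQuantBS} together with a monotonicity statement for $\operatorname{wbs}^s_\xi$, but deriving monotonicity of $\operatorname{wbs}^s_\xi$ itself seems to require the same combinatorial input, so the direct argument above is preferable.
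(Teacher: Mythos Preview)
Your argument for $\operatorname{sm}_\xi(A) \leq \operatorname{sm}_\zeta(A)$ is correct and, after you sort out the direction of the inclusion, essentially identical to the paper's: pass to a tail so that every $\mathcal{S}_\zeta$-set with large enough minimum lies in $\mathcal{S}_\xi$, then the $\ell_1^\xi$-estimate for the spread set $G = \{k_i : i \in F\}$ gives the $\ell_1^\zeta$-estimate for the subsequence.

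The argument for $\operatorname{wus}_\xi(A) \leq \operatorname{wus}_\zeta(A)$ has a genuine gap. Having chosen $N_0 \in [M]$ and $F \in \mathcal{S}_\zeta^{N_0}$, you correctly obtain $F \in \mathcal{S}_\xi$ and $F \subseteq M$, i.e.\ $F \in \mathcal{S}_\xi[M]$. But you then assert that ``$\mathcal{S}_\xi^M$ already contains every subset of $M$ that is a spread of an $\mathcal{S}_\xi$-set''. This is exactly the inclusion $\mathcal{S}_\xi[M] \subseteq \mathcal{S}_\xi^M$, which is \emph{false} in general; the paper explicitly records that $\mathcal{S}_\xi^M \subsetneq \mathcal{S}_\xi[M]$ for $\xi > 0$ (e.g.\ $\{2,3\} \in \mathcal{S}_1[M] \setminus \mathcal{S}_1^M$ for $M = \{2,3,4,\dots\}$). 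So from $\mathcal{S}_\xi^M \subseteq \mathcal{F}_c$ you cannot conclude $F \in \mathcal{F}_c$ merely because $F \in \mathcal{S}_\xi[M]$.

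The paper sidesteps this bookkeeping entirely by first passing to the subsequence: it uses that $\mathcal{S}_\xi^M \subseteq \mathcal{F}_c((x_n - x)_{n \in \mathbb{N}})$ is equivalent to $\mathcal{S}_\xi \subseteq \mathcal{F}_c((x_n - x)_{n \in M})$, and then, with $N$ a tail of $\mathbb{N}$ (not of $M$) satisfying $\mathcal{S}_\zeta[N] \subseteq \mathcal{S}_\xi$, observes $\mathcal{S}_\zeta^N \subseteq \mathcal{S}_\zeta[N] \subseteq \mathcal{S}_\xi \subseteq \mathcal{F}_c((x_n - x)_{n \in M})$, so $(x_n)_{n \in M}$ is $(\zeta,c)$-large. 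Your route can also be salvaged: if you write $N_0 = (m_{\phi(i)})_{i \in \mathbb{N}}$ inside $M = (m_j)_{j \in \mathbb{N}}$ and choose $N_0$ so that $\phi(1) \geq n(\zeta,\xi)$, then for $F = \{m_{\phi(i)} : i \in G\} \in \mathcal{S}_\zeta^{N_0}$ the index set $\{\phi(i): i \in G\}$ is a spread of $G \in \mathcal{S}_\zeta$ with minimum $\geq n(\zeta,\xi)$, hence lies in $\mathcal{S}_\xi$, giving $F \in \mathcal{S}_\xi^M$ honestly. Either fix works, but the justification you wrote does not.
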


\begin{proof}
    It follows from \cite[Lemma 2.1.8.(a)]{ArgyrosMercourakusTsarpalias} that there is $n = n(\zeta,\xi)$, such that for all $F \in \mathcal{S}_\zeta$ with $n \leq F$, we have $F \in \mathcal{S}_\xi$. In other words, if we set $N = \{m \in \mathbb{N}: \; n \leq m\}$, then $\mathcal{S}_\zeta [N] \subseteq \mathcal{S}_\xi$.
    
    Let $(x_n)_{n \in \mathbb{N}}$ be a sequence in $A$ which generates an $\ell_1^\xi$-spreading model with constant $c > 0$ then the sequence $(x_n)_{n \in N}$ generates an $\ell_1^\zeta$-spreading model with constant $c$, which gives us the inequality for the quantity $\operatorname{sm}$.
    
    Now, let $(x_n)_{n \in \mathbb{N}}$ be a sequence in $A$ which weakly converges to some $x \in X$ and is $(\xi,c)$-large for some $c > 0$. Then there is $M \in [\mathbb{N}]$ such that $\mathcal{S}_\xi^M \subseteq \mathcal{F}_c ((x_n-x)_{n \in \mathbb{N}})$. It is easy to check that this is equivalent to saying that $\mathcal{S}_\xi \subseteq \mathcal{F}_c ((x_n-x)_{n \in M})$. It follows that
    \begin{align*}
        \mathcal{S}_\zeta^N \subseteq \mathcal{S}_\zeta [N] \subseteq \mathcal{S}_\xi \subseteq \mathcal{F}_c((x_n-x)_{n \in M})
    \end{align*}
    and $(x_n)_{n \in M}$ is $(\zeta,c)$-large, which gives us the inequality for the quantity $\operatorname{wus}$.
\end{proof}

Now we turn our attention to the quantities $\operatorname{bs}^s_\xi$ and $\operatorname{wbs}^s_\xi$. We will first need the following definition.

\begin{definition}
    Let $(y_n)_{n \in \mathbb{N}}$ and $(z_n)_{n \in \mathbb{N}}$ be two sequences in a Banach space $X$. We say that the sequence $(z_n)_{n \in\mathbb{N}}$ is a \textit{non-increasing block convex combination} of the sequence $(y_n)_{n \in \mathbb{N}}$ if
        \begin{align*}
            z_n = \sum_{j=k_n + 1}^{k_{n+1}} \alpha (j) y_j,
        \end{align*}
    where $(k_n)_{n \in \mathbb{N}}$ is an increasing sequence of integers with $k_1 = 0$ and $(\alpha(j))_{j \in \mathbb{N}}$ is a non-increasing sequence of real numbers satisfying $\sum_{j=k_n + 1}^{k_{n+1}} \alpha(j) = 1$ for each $n \in \mathbb{N}$.
\end{definition}

For example, the $M$-summability method $([\xi+1]_n^M)_{n \in \mathbb{N}}$ is a non-increasing block convex combination of the $M$-summability method $(\xi_n^M)_{n \in \mathbb{N}}$ for any $\xi<\omega_1$ and $M \in [\mathbb{N}]$. It is readily proved that if a sequence $(x_n)_{n \in \mathbb{N}}$ is a non-increasing block convex combination of a sequence $(y_n)_{n \in \mathbb{N}}$, which is a non-increasing block convex combination of a sequence $(z_n)_{n \in \mathbb{N}}$, then $(x_n)_{n \in \mathbb{N}}$ is a non-increasing block convex combination of $(z_n)_{n \in \mathbb{N}}$.

Now we will prove an auxiliary lemma which shows that the quantity $\operatorname{cca}$ behaves well with respect to taking non-increasing block convex combinations.

\begin{lemma} \label{LemmaConvexBlock}
    Let $(y_n)_{n \in \mathbb{N}}$ and $(z_n)_{n \in \mathbb{N}}$ be two sequences in a Banach space $X$ such that $(z_n)_{n \in \mathbb{N}}$ is a non-increasing block convex combination of $(z_n)_{n \in \mathbb{N}}$. Then $\operatorname{cca}(z_n) \leq \operatorname{cca}(y_n)$.
\end{lemma}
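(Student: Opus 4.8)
The statement to prove is that if $(z_n)_{n \in \mathbb{N}}$ is a non-increasing block convex combination of $(y_n)_{n \in \mathbb{N}}$, then $\operatorname{cca}(z_n) \leq \operatorname{cca}(y_n)$; that is, the Ces\`aro averages of the $z$'s are ``no less Cauchy'' than those of the $y$'s. The key observation is that, because the coefficients $\alpha(j)$ are non-increasing and sum to $1$ on each block, each Ces\`aro average $\frac{1}{N}\sum_{n=1}^{N} z_n$ is itself a \emph{convex} combination of the Ces\`aro averages $\frac{1}{M}\sum_{j=1}^{M} y_j$. This is exactly the combinatorial content behind the fact that repeated averages refine ordinary Ces\`aro summation, and it is the same mechanism used implicitly in \cite[Lemma 3.4.]{Bendov__2015}; here I just need the ``one step'' version of it.

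First I would set up notation: write $Z_N = \frac{1}{N}\sum_{n=1}^{N} z_n$ and $Y_M = \frac{1}{M}\sum_{j=1}^{M} y_j$, and expand
\begin{align*}
    N Z_N = \sum_{n=1}^{N} z_n = \sum_{n=1}^{N} \sum_{j=k_n+1}^{k_{n+1}} \alpha(j) y_j = \sum_{j=1}^{k_{N+1}} \alpha(j) y_j,
\end{align*}
so that $Z_N$ is a convex combination (the $\alpha(j)$ for $j \le k_{N+1}$ sum to $N$, hence after dividing by $N$ to $1$) of the vectors $y_1,\dots,y_{k_{N+1}}$ with weights $\frac{\alpha(j)}{N}$ that are themselves non-increasing in $j$. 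The crucial step is then the classical Abel/summation-by-parts fact that a probability vector $(w_1,\dots,w_K)$ which is non-increasing lies in the convex hull of the ``prefix-average'' vectors $\{(\tfrac1M,\dots,\tfrac1M,0,\dots,0): 1 \le M \le K\}$; concretely, writing $w_M - w_{M+1} = \beta_M \ge 0$ (with $w_{K+1}=0$), one gets $\sum_j w_j y_j = \sum_{M=1}^{K} M\beta_M \cdot Y_M$ and $\sum_{M=1}^{K} M \beta_M = \sum_j w_j = 1$. Applying this with $w_j = \alpha(j)/N$ and $K = k_{N+1}$ expresses $Z_N$ as a convex combination of $Y_1,\dots,Y_{k_{N+1}}$.

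Now fix $\epsilon>0$ and pick $n_0$ so that $\norm{Y_M - Y_{M'}} \le \operatorname{cca}(y_n) + \epsilon$ for all $M,M' \ge n_0$. Choose $N_0$ large enough that $k_{N+1} \ge \min\operatorname{supp}$-type control holds — more precisely, so that for $N \ge N_0$ the convex representation of $Z_N$ uses only $Y_M$'s; the only subtlety is that the representation of $Z_N$ may include a few $Y_M$ with $M < n_0$. To handle this I would argue exactly as in the proof of Lemma \ref{LemmaCa->Cca} above: since the weight $M\beta_M$ attached to $Y_M$ for the finitely many ``small'' indices $M < n_0$ tends to $0$ as $N \to \infty$ (because $\alpha(j)/N \to 0$ for each fixed $j$, and the block structure forces $k_{N+1}\to\infty$), for $N$ large the total mass on those terms is as small as we like, and the bounded-ness of $(y_n)$ controls their contribution; alternatively, one observes directly that $Z_N$ for large $N$ is a convex combination of $Y_M$'s with $\norm{Y_M - Z_{N'}}$ small, mirroring the ``convex combination of elements close to each other'' argument. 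Combining, $\norm{Z_N - Z_{N'}} \le \operatorname{cca}(y_n) + 2\epsilon$ for all large $N,N'$, whence $\operatorname{cca}(z_n) \le \operatorname{cca}(y_n) + 2\epsilon$; letting $\epsilon \to 0$ finishes.

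The main obstacle I anticipate is precisely this bookkeeping of the finitely many ``initial'' prefix-averages $Y_M$ with $M<n_0$ that unavoidably appear in the convex representation of each $Z_N$: one must check that their aggregate weight vanishes as $N\to\infty$, which uses both the non-increasing nature of $(\alpha(j))$ and the fact that each individual $\alpha(j)$ is fixed while $N\to\infty$. Everything else — the summation-by-parts identity and the convexity estimate — is routine, and the proof can in fact be shortened by simply invoking \cite[Lemma 3.4.]{Bendov__2015} together with the remark preceding the lemma that $([\xi+1]_n^M)$ is a non-increasing block convex combination of $(\xi_n^M)$, but I would prefer to give the self-contained Abel-summation argument above since it makes the mechanism transparent.
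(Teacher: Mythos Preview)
Your proposal is correct and takes essentially the same approach as the paper: both express the Ces\`aro mean $\frac{1}{N}\sum_{n=1}^N z_n$ via Abel summation as a convex combination of the Ces\`aro means $Y_M = \frac{1}{M}\sum_{j=1}^M y_j$, and then show that the total weight on the finitely many ``bad'' indices $M < n_0$ is $O(1/N)$. The paper carries this out by writing the difference $Z_n - Z_m$ as a double sum $\frac{1}{nm}\sum_{j,i}\beta_n(j)\beta_m(i)(u_j-u_i)$ and splitting into three pieces, using the bound $\sum_{j \le N_1}\beta_n(j) \le N_1+1$ (independent of $n$); your route via the single-$N$ convex representation and the observation that each fixed weight $M(\alpha(M)-\alpha(M+1))/N \to 0$ is a minor repackaging of the same estimate.
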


\begin{proof}  
    Let $(k_n)_{n \in \mathbb{N}}$ and $(\alpha(j))_{j \in \mathbb{N}}$ be the sequences from the definition of non-increasing block convex combination. Let $c > 0$ and suppose that $\operatorname{cca}(y_n) \leq c$. Let us define $u_n = \frac{1}{n} \sum_{j=1}^{n} y_j$. The strategy is to show that the Ces\`aro means of the sequence $(z_n)_{n \in \mathbb{N}}$ can be written as convex combinations of $u_n$'s.
    
    Fix $\epsilon > 0$ and define $c' = c + \epsilon$. As $\operatorname{ca}(u_n) = \operatorname{cca}(y_n) < c'$, we can find $N_1 \in \mathbb{N}$ such that $\norm{u_j - u_i} \leq c'$ for all $i,j > N_1$. We define for $n \in \mathbb{N}$ and $j \leq k_{n+1}$
    \begin{align*}
        \beta_n (j) =
            \begin{cases}
                \alpha (k_{n+1}) k_{n+1}& \dots \; j = k_{n+1} \\
                \left( \alpha(j) - \alpha(j+1) \right) j & \dots \; j < k_{n+1}.
            \end{cases}
    \end{align*}
    Then we have for $n \in \mathbb{N}$
    \begin{align*}
        \frac{1}{n} \sum_{j=1}^n z_j &= \frac{1}{n} \sum_{j=1}^n  \sum_{i=k_j+1}^{k_{j+1}} \alpha (i) y_i = \frac{1}{n} \sum_{j=1}^{k_{n+1}} \alpha(j) y_j \\
        &= \frac{1}{n} \left( \alpha(k_{n+1}) \sum_{j=1}^{k_{n+1}} y_j + \sum_{j=1}^{k_n} (\alpha(j)-\alpha(j+1)) \sum_{i=1}^{j} y_i \right) \\
        &= \frac{1}{n} \left( \alpha(k_{n+1}) k_{n+1} u_{k_{n+1}} + \sum_{j=1}^{k_n} (\alpha(j)-\alpha(j+1)) j u_j \right) \\
        &= \frac{1}{n} \sum_{j=1}^{k_{n+1}} \beta_n(j) u_j.
    \end{align*}
    
    We will now prove by induction over $n$ that $\sum_{j=1}^{k_{n+1}} \beta_n (j) = n$. If $n = 1$, we have, since $k_1 = 0$,
    \begin{align*}
        \sum_{j=1}^{k_2} \beta_1(j) = \alpha(k_2) k_2 + \sum_{j = k_1 + 1}^{k_2 - 1} (\alpha(j) - \alpha(j+1)) j = \sum_{j=k_1 + 1}^{k_2} \alpha(j) = 1.
    \end{align*}
    Now suppose that for $n \in \mathbb{N}$ the equality $\sum_{j=1}^{k_{n+1}} \beta_n (j) = n$ holds. Notice that if $j < k_{n+1}$, we have $\beta_n(j) = \beta_{n+1}(j)$. Hence,
    \begin{align*}
        \sum_{j=1}^{k_{n+2}} \beta_{n+1}(j) - n &= \sum_{j=1}^{k_{n+2}} \beta_{n+1}(j) - \sum_{j=1}^{k_{n+1}} \beta_n(j) = \sum_{j=k_{n+1}}^{k_{n+2}} \beta_{n+1}(j) - \beta_n(k_{n+1}) \\
        &= \alpha(k_{n+2}) k_{n+2} + \sum_{j=k_{n+1}}^{k_{n+2}-1} (\alpha(j) - \alpha(j+1))j - \alpha(k_{n+1}) k_{n+1} \\
        &= \sum_{k_{n+1}+1}^{k_{n+2}} \alpha(j) = 1
    \end{align*}
    and the induction step follows.
    
    We proceed with estimating
    \begin{align*}
        \frac{1}{n} \sum_{j=1}^n z_j - \frac{1}{m} \sum_{i=1}^m z_i = \frac{1}{n} \sum_{j=1}^{k_{n+1}} \beta_n(j) u_j - \frac{1}{m} \sum_{i=1}^{k_{m+1}} \beta_m(i) u_i.
    \end{align*}
    Since $\sum_{j=1}^{k_{n+1}} \beta_n(j) = n$ and $\sum_{i=1}^{k_{m+1}} \beta_m(i) = m$, we have
    \begin{align*}
        \frac{1}{n} \sum_{j=1}^{k_{n+1}} \beta_n(j) u_j &- \frac{1}{m} \sum_{i=1}^{k_{m+1}} \beta_m(i) u_i = \frac{1}{nm} \sum_{j=1}^{k_{n+1}} \sum_{i=1}^{k_{m+1}} \beta_n(j) \beta_m(i) (u_j-u_i) \\
        = \frac{1}{nm} \Bigg( &\sum_{j=1}^{N_1} \sum_{i=1}^{k_{m+1}} \beta_n(j) \beta_m(i) (u_j-u_i) \\
        + &\sum_{j=N_1 + 1}^{k_{n+1}} \sum_{i=1}^{N_1} \beta_n(j) \beta_m(i) (u_j-u_i) \\
        + &\sum_{j=N_1+1}^{k_{n+1}} \sum_{i=N_1+1}^{k_{m+1}} \beta_n(j) \beta_m(i) (u_j-u_i) \Bigg).
    \end{align*}
    It follows from boundedness of the sequence $(y_n)_{n \in \mathbb{N}}$ that the sequence $(u_n)_{n \in \mathbb{N}}$ is also bounded. Let $M > 0$ be such that $\norm{u_n} \leq M$ for all $n \in \mathbb{N}$. We can find $N_2 > N_1$ such that for all $k > N_2$ we have $\frac{2M(N_1+1)}{k} < \epsilon$. Fix any $m,n > N_2$. Then
    \begin{align*}
        \frac{1}{nm} \sum_{j=1}^{N_1} \sum_{i=1}^{k_{m+1}} \beta_n(j) \beta_m(i) \norm{u_j-u_i} \leq \frac{2M(N_1 +1)m}{nm} &< \epsilon \\
        \frac{1}{nm} \sum_{j=N_1+1}^{k_{n+1}} \sum_{i=1}^{N_1} \beta_n(j) \beta_m(i) \norm{u_j-u_i} \leq \frac{2Mn(N_1+1)}{nm} &< \epsilon.
    \end{align*}
    The first inequalities on each line above hold as
    \begin{align*}
        \sum_{j=1}^{N_1} \beta_n(j) = \sum_{j=1}^{N_1} \beta_{N_1 + 1}(j) < \sum_{j=1}^{k_{N_1+1}} \beta_{N_1+1}(j) = N_1 + 1
    \end{align*}
    and analogically $\sum_{i=1}^{N_1} \beta_m(i) < N_1+1$.
    
    What is left is the estimate of the third term, which follows easily from the choice of $N_1$
    \begin{align*}
        \frac{1}{nm} \sum_{j= N_1+1}^{k_{n+1}} \sum_{i = N_1 + 1}^{k_{m+1}} \beta_n(j) \beta_m(i) \norm{u_j-u_i} \leq \frac{c'nm}{nm} = c'.
    \end{align*}
    
    We have thus shown that for $m,n > N_2$ we have
    \begin{align*}
        \norm{\frac{1}{n} \sum_{j=1}^n z_j - \frac{1}{m} \sum_{i=1}^m z_i} \leq 2 \epsilon + c' = 3 \epsilon + c.
    \end{align*}
    As $\epsilon > 0$ was arbitrary, we get $\operatorname{cca}(z_n) \leq c$.
\end{proof}

\begin{lemma}\label{LemmaNonincreasingBlockConvex}
    For every $\xi \leq \zeta < \omega_1$ and $M \in [\mathbb{N}]$ there is $N \in [M]$ such that the following statements hold:
    \begin{enumerate}[(a)]
        \item There is an increasing sequence of integers $(n_k)_{k \in \mathbb{N}}$ such that $N = \bigcup_{k=1}^\infty \operatorname{supp} \xi_{n_k}^M$.
        \item $(\zeta_j^N)_{j \in \mathbb{N}}$ is a non-increasing block convex combination of $(\xi_j^N)_{j \in \mathbb{N}}$.
    \end{enumerate}
\end{lemma}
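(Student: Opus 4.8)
The plan is to argue by transfinite induction on $\zeta$, proving the statement simultaneously for all $\xi\le\zeta$ and all $M\in[\mathbb N]$. Apart from the transitivity of non-increasing block convex combinations noted after the definition, I would use two elementary structural facts about Repeated Averages, both immediate from the basic properties in \cite[Section~2.1]{ArgyrosMercourakusTsarpalias} (ultimately from the fact that the first Repeated Average of a given order over a set depends only on an initial segment of that set): (i) if $N'$ is obtained from $L\in[\mathbb N]$ by deleting the supports of its first $s$ Repeated Averages of order $\xi$, then $\xi_i^{N'}=\xi_{i+s}^L$ for every $i$; (ii) if $N'=\bigcup_k\operatorname{supp}\xi_{n_k}^L$ for an increasing sequence $(n_k)$, then $\xi_k^{N'}=\xi_{n_k}^L$ for every $k$. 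The base case $\zeta=\xi$ is trivial: take $N=M$ and $n_k=k$, so $N=\bigcup_k\operatorname{supp}\xi_k^M$ because $(\xi_k^M)_k$ is an $M$-summability method, and $(\xi_j^N)_j$ is a non-increasing block convex combination of itself.

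For the successor case $\zeta=\eta+1$ with $\xi\le\eta$, the key observation is that for every $L\in[\mathbb N]$ the sequence $([\eta+1]_j^L)_j$ is a non-increasing block convex combination of $(\eta_j^L)_j$: this is read off directly from the definition of the $(\eta+1)$-Repeated Average, where the $n$-th block of indices is the interval $\{k_n+1,\dots,k_n+s_n\}$ appearing there, these intervals are consecutive and partition $\mathbb N$, the coefficient on the $n$-th block is the constant $1/s_n$, and $s_n=\min\operatorname{supp}\eta_{k_n+1}^L$ is strictly increasing in $n$ (since $k_n+1$ is, and the minima of the supports of the $\eta_j^L$ strictly increase), so $(1/s_n)_n$ is non-increasing. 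Applying the induction hypothesis to $(\xi,\eta,M)$ to get $N$ (and $(n_k)$) with $(\eta_j^N)_j$ a non-increasing block convex combination of $(\xi_j^N)_j$, then applying the observation with $L=N$ and composing, one sees the same $N$ and $(n_k)$ work for $\zeta$.

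The limit case is the heart of the argument. Let $(\zeta_i)_i$ be the increasing sequence of successor ordinals fixed in the definition of $\mathcal S_\zeta$, so $\zeta_j^N=[\zeta_{r_j}]_1^{N_j}$ with $N_1=N$, $N_{j+1}=N_j\setminus\operatorname{supp}\zeta_j^N$, $r_j=\min N_j$; the supports $Q_j=\operatorname{supp}\zeta_j^N$ chop $N$ into consecutive pieces. One builds $N=\bigcup_k\operatorname{supp}\xi_{n_k}^M$ one $\zeta$-piece at a time. Having committed $Q_1<\dots<Q_{j-1}$ as unions of $\xi^M$-blocks $\operatorname{supp}\xi_{n_1}^M,\dots,\operatorname{supp}\xi_{n_{\sigma_{j-1}}}^M$, the remaining $N_j$ is a union of $\xi^M$-blocks whose indices $n_{\sigma_{j-1}+1}<n_{\sigma_{j-1}+2}<\cdots$ are still free; one picks $n_{\sigma_{j-1}+1}$ so large that, with $v_j:=\min\operatorname{supp}\xi_{n_{\sigma_{j-1}+1}}^M$ (which becomes $r_j$), we have $\zeta_{v_j}\ge\xi$, $v_j$ exceeds the threshold needed to invoke the induction hypothesis at order $\zeta_{v_j}$, and $1/v_j$ is below the least coefficient used so far. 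Writing $L=M\setminus\bigcup_{i<n_{\sigma_{j-1}+1}}\operatorname{supp}\xi_i^M$ (so $\xi_i^L=\xi_{i+n_{\sigma_{j-1}+1}-1}^M$ and $\min L=v_j$ by (i)) and applying the strengthened induction hypothesis (see below) to $(\xi,\zeta_{v_j},L)$, one gets $K=\bigcup_l\operatorname{supp}\xi_{q_l}^L$ with $q_1=1$ over which $([\zeta_{v_j}]_m^K)_m$ is a non-increasing block convex combination of $(\xi_m^K)_m$; declaring the $\xi^M$-blocks supporting $[\zeta_{v_j}]_1^K$ to be the next batch of $N$'s blocks makes $Q_j$ exactly that support, and since $N_j$ and $K$ then share the initial segment $Q_j$ with $\operatorname{supp}[\zeta_{v_j}]_1^K\subseteq Q_j$ one has $\zeta_j^N=[\zeta_{v_j}]_1^{N_j}=[\zeta_{v_j}]_1^K$, which by (ii) is a non-increasing convex combination of the consecutive averages $\xi_{\sigma_{j-1}+1}^N,\dots,\xi_{\sigma_j}^N$. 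The resulting blocks $\{\sigma_{j-1}+1,\dots,\sigma_j\}$ partition $\mathbb N$; coefficients are non-increasing inside each piece by the induction hypothesis; and since $\zeta_{v_j}$ is a successor, the largest coefficient of $[\zeta_{v_j}]_1^K$ is at most $1/\min K=1/v_j$, so the choice of $v_j$ prevents the coefficients from increasing when passing between consecutive pieces. Hence $(\zeta_j^N)_j$ is a non-increasing block convex combination of $(\xi_j^N)_j$.

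The main obstacle, and the reason the plain induction does not close, is that the subset produced by the induction hypothesis may raise the minimum of the set, and in the limit case it is precisely the minimum that selects the term $\zeta_{r_j}$ governing the next step of the $\zeta$-recursion. To get around this I would prove a strengthened statement: besides (a) and (b), one also requires $\min N=\min M$ (i.e.\ $n_1=1$), under the extra hypothesis $\min M\ge i_0(\xi,\zeta)$ for an integer $i_0(\xi,\zeta)$ defined by the same transfinite recursion (with $i_0(\xi,\xi)=1$ and $i_0(\xi,\eta+1)=i_0(\xi,\eta)$); the stated lemma then follows by first deleting finitely many initial $\xi^M$-blocks (using (i)) so that $\min M$ clears the threshold. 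The delicate bookkeeping point is to check that this recursion is well-founded — equivalently, that at each limit stage the thresholds $i_0(\xi,\zeta_i)$ are eventually dominated by $i$, which is exactly what makes the "so large'' choices of $v_j$ above available. Once this is in place, the base and successor cases carry the strengthened statement through unchanged and the limit-case construction above produces the required $N$.
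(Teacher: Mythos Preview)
Your base case and successor case agree with the paper's proof, and the two structural facts (i), (ii) you isolate are exactly the properties P.3 and P.4 from \cite{ArgyrosMercourakusTsarpalias} that the paper relies on. The divergence is entirely at the limit step, and here your route is genuinely different from the paper's.

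The paper does \emph{not} strengthen the induction hypothesis to preserve minima. Instead, at a limit $\zeta$ with approximating sequence $(\zeta_n)$, it fixes $n_0$ with $\zeta_{n_0}>\xi$ and \emph{chains} the hypothesis through consecutive levels: first $S(\xi,\zeta_{n_0},M)$ to get $N_0$, then $S(\zeta_{n_0},\zeta_{n_0+1},M_0)$ to get $N_1$, then $S(\zeta_{n_0+1},\zeta_{n_0+2},M_1)$, and so on, where $M_k=N_k\setminus\operatorname{supp}[\zeta_{n_0+k}]_1^{N_k}$. The final $N$ is the union of the first supports $\operatorname{supp}[\zeta_{n_0+k}]_1^{N_k}$, and the minimum problem is handled \emph{a posteriori} by padding $N$ with $\bigcup_{k<n_0}\operatorname{supp}\zeta_k^M$ to form an auxiliary set $P$ on which the $\zeta$-repeated averages line up correctly; P.3/P.4 then give $\zeta_{k+1}^N=[\zeta_{n_0+k}]_1^{N_k}$. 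The non-increase of coefficients across consecutive pieces is obtained by an inner induction along the chain: since one already knows $([\zeta_{n_0+n-1}]_j^{N_{n-1}})_j$ is a non-increasing block convex combination of $(\xi_j^{N_{n-1}})_j$, and $S(\zeta_{n_0+n-1},\zeta_{n_0+n},M_{n-1})$ expresses $[\zeta_{n_0+n}]_1^{N_n}$ through the previous level with coefficients $\le 1$, composing gives $\alpha_{n+1}(1)\le\alpha_n(k_2^n)$.

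Your direct approach---applying $S(\xi,\zeta_{v_j},L)$ in one jump at every stage---has two gaps as written. First, the well-foundedness of the threshold recursion $i_0$ is precisely the hard point and you only assert it: one needs that for every limit $\zeta$ the set $\{i:\ i\ge i_0(\xi,\zeta_i)\}$ is cofinite, but the values $i_0(\xi,\zeta_i)$ are produced by the same recursion at smaller limits and there is no a priori reason they are eventually dominated by $i$; this needs an argument, not a declaration. Second, your coefficient bound ``the largest coefficient of $[\zeta_{v_j}]_1^K$ is at most $1/v_j$'' uses that $(\eta_i^K)_i$ (with $\eta=\zeta_{v_j}-1$) is itself a block convex combination of $(\xi_i^K)_i$, which is \emph{not} part of your stated hypothesis at level $(\xi,\zeta_{v_j})$; without it, knowing only that $[\zeta_{v_j}]_1^K$ is a non-increasing convex combination of the $\xi_i^K$ gives no upper bound on $\alpha(1)$ better than $1$. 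The paper's chain supplies exactly this missing intermediate-level information, and its padding trick with $P$ removes the need for your minimum-preserving strengthening altogether.
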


\begin{proof}
    Let us abbreviate by $S(\xi,\zeta,M)$ the statement of the lemma for $\xi \leq \zeta$ and $M \in [\mathbb{N}]$. We will prove the lemma by induction over $\zeta$. Note that the statement $S(\xi,\zeta,M)$ is true if $\xi = \zeta$, just take $N = M$. Hence, we just need to prove the statements with strict inequality $\xi < \zeta$. If $\zeta = 0$, then the only possible choice for $\xi \leq \zeta$ is $\xi = \zeta = 0$ and we are done.
    
    Let $\zeta + 1 > 0$ be a successor ordinal and suppose that $S(\xi,\eta,M)$ holds for any $\xi \leq \eta < \zeta + 1$ and $M \in [\mathbb{N}]$. Fix $\xi < \zeta + 1$ and $M \in [\mathbb{N}]$. By the induction hypothesis the statement $S(\xi, \zeta, M)$ is valid. Let $N \in [M]$ be witnessing that. Then the property (a) of $S(\xi,\zeta+1,M)$ is the same as the property (a) of $S(\xi,\zeta,M)$, and so is satisfied. It follows from the definition of the $N$-summability method $([\zeta+1]_n^N)_{n \in \mathbb{N}}$ that $([\zeta + 1]_j^N)_{j \in \mathbb{N}}$ is a non-increasing block convex combination of $(\zeta_j^N)_{j \in \mathbb{N}}$. But $(\zeta_j^N)_{j \in \mathbb{N}}$ is in turn a non-increasing block convex combination of $(\xi_j^N)_{j \in \mathbb{N}}$. It follows that $([\zeta + 1]_j^N)_{j \in \mathbb{N}}$ is a non-increasing block convex combination of $(\xi_j^N)_{j \in \mathbb{N}}$ and the property (b) of $S(\xi,\zeta+1,M)$ also holds. Hence, the statement $S(\xi,\zeta+1,M)$ holds.
    
    Let $\zeta > 0$ be a limit ordinal and suppose that $S(\xi,\eta,M)$ holds for any $\xi \leq \eta < \zeta$ and $M \in [\mathbb{N}]$. Fix $\xi < \zeta$ and $M \in [\mathbb{N}]$. Let $(\zeta_n)_{n \in \mathbb{N}}$ be the sequence of successor ordinals increasing to $\zeta$ used to define the Schreier family $\mathcal{S}_\zeta$. Then $\xi < \zeta_{n_0}$ for some $n_0 \in \mathbb{N}$. Let $N_0 \in [M]$ be the set witnessing the validity of $S(\xi,\zeta_{n_0},M)$ and set $M_0 = N_0 \setminus \left(\operatorname{supp} [\zeta_{n_0}]_1^{N_0} \right)$. We proceed recursively: suppose that for $k \geq 0$ the set $M_k$ has already been defined. Set
    \begin{itemize}
        \item $N_{k+1}$ to be the set witnessing the validity of $S(\zeta_{n_0+k}, \zeta_{n_0+k+1}, M_k)$;
        \item $M_{k+1} = N_{k+1} \setminus \left(\operatorname{supp} [\zeta_{n_{0}+k+1}]_1^{N_{k+1}} \right)$.
    \end{itemize}
    Let
    \begin{align*}
        N = \bigcup_{k=0}^\infty \operatorname{supp} [\zeta_{n_0+k}]_1^{N_k} \hspace{1cm} \text{and} \hspace{1cm} P = N \cup \bigcup_{k=1}^{n_0-1} \operatorname{supp} \zeta_k^M. 
    \end{align*}
    By the definition of the $P$-summability method $(\zeta_k^P)_{k \in \mathbb{N}}$ we have $\zeta_k^P = [\zeta_k]^{P_k}_1$ where $P_1 = P$ and $P_{k+1} = P_k \setminus \operatorname{supp}[\zeta_k]_1^{P_k}$. By P.3. in \cite[p. 171]{ArgyrosMercourakusTsarpalias} we get that $\zeta_k^P = \zeta_k^M$ for $k=1,\dots,n_0-1$. It follows that $\operatorname{supp}[\zeta_{n_0+k}]_1^{N_k}$ is an initial segment of $P_{n_0+k}$ for $k \geq 0$. Hence, again by P.3. in \cite[p. 171]{ArgyrosMercourakusTsarpalias}, we get $[\zeta_{n_0+k}]_1^{P_{n_0+k}} = [\zeta_{n_0+k}]_1^{N_k}$ for $k \geq 0$. Now we can use P.4. in \cite[p. 171]{ArgyrosMercourakusTsarpalias} and the fact that
    \begin{align*}
        N = \bigcup_{k=0}^\infty \operatorname{supp}[\zeta_{n_0+k}]_1^{N_k} = \bigcup_{k=0}^\infty \operatorname{supp}[\zeta_{n_0+k}]_1^{P_{n_0+k}} = \bigcup_{k=0}^\infty \operatorname{supp} \zeta_{n_0+k}^{P} = \bigcup_{k=n_0}^\infty \operatorname{supp} \zeta_{k}^{P} 
    \end{align*}
    to conclude that for $k \geq 0$
    \begin{align*}
        \zeta_{k+1}^N = \zeta_{n_0+k}^P = [\zeta_{n_0+k}]_1^{P_{n_0+k}} = [\zeta_{n_0+k}]_1^{N_k}.
    \end{align*}
    We will now show that $N \in [M]$ witnesses the validity of $S(\xi,\zeta,M)$.
    
    First, let us prove by induction that for each $n \geq 0$ the sequence $([\zeta_{n_0+n}]_j^{N_n})_{j \in \mathbb{N}}$ is a non-increasing block convex combination of $(\xi_j^{N_n})_{j \in \mathbb{N}}$. The case $n = 0$ follows immediately from the choice of $N_0$ and property (b) of $S(\xi,\zeta_{n_0},M)$. Suppose the claim holds for some $n \geq 0$. By the choice of $N_{n+1}$ as the set witnessing $S(\zeta_{n_0+n}, \zeta_{n_0+n+1}, M_{n})$, we can use property (b) to get that $([\zeta_{n_0+n+1}]_j^{N_{n+1}})_{j \in \mathbb{N}}$ is a non-increasing block convex combination of $([\zeta_{n_0+n}]_j^{N_{n+1}})_{j \in \mathbb{N}}$. But by the induction hypothesis $([\zeta_{n_0+n}]_j^{N_{n}})_{j \in \mathbb{N}}$ is a non-increasing block convex combination of $(\xi_j^{N_n})_{j \in \mathbb{N}}$, and hence, by property (a) of $S(\zeta_{n_0+n}, \zeta_{n_0+n+1}, M_{n})$ and P.4. in \cite[p. 171]{ArgyrosMercourakusTsarpalias}, also $([\zeta_{n_0+n}]_j^{N_{n+1}})_{j \in \mathbb{N}}$ is a non-increasing block convex combination of $(\xi_j^{N_{n+1}})_{j \in \mathbb{N}}$. Thus $([\zeta_{n_0+n+1}]_j^{N_{n+1}})_{j \in \mathbb{N}}$ is a non-increasing block convex combination of a non-increasing block convex combination of $(\xi_j^{N_{n+1}})_{j \in \mathbb{N}}$, and hence is itself a non-increasing block convex combination of $(\xi_j^{N_{n+1}})_{j \in \mathbb{N}}$. Therefore the claim is proved.
    
    It follows that for each $n \in \mathbb{N}$ we have
    \begin{align*}
        [\zeta_{n_0+n-1}]_j^{N_{n-1}} = \sum_{i=k^n_j+1}^{k^n_{j+1}} \alpha_n(i) \xi_i^{N_{n-1}},
    \end{align*}
    where $(k^n_j)_{j \in \mathbb{N}}$ is an increasing sequence of integers with $k^n_1 = 0$ and $(\alpha_n(i))_{i\in \mathbb{N}}$ is the sequence of coefficients of non-increasing block convex combinations.
    
    Let us recursively define an increasing sequence of integers $(k_n)_{n\in \mathbb{N}}$ and a sequence of positive numbers $(\alpha(j))_{j \in \mathbb{N}}$ satisfying $\sum_{j=k_n+1}^{k_{n+1}} \alpha(j) = 1$ for each $n \in \mathbb{N}$. Set $k_1 = 0$, $k_2 = k_2^1$ and $\alpha(j) = \alpha_1(j)$ for $1 \leq j \leq k_2^1$. If for some $n \in \mathbb{N}$ the number $k_n$ has already been defined, set $k_{n+1} = k_n + k_2^n$ and for $k_n + 1 \leq j \leq k_{n+1}$ set $\alpha(j) = \alpha_{n+1}(j - k_n)$. We will also need the fact that
    \begin{align*}
        \xi_j^{N_{n-1}} = \xi_{k_n+j}^N
    \end{align*}
    which is readily proved by induction over $j$ using P.3. and P.4. in \cite[p. 171]{ArgyrosMercourakusTsarpalias} and the fact that
    \begin{align*}
        N = \bigcup_{n=1}^\infty \operatorname{supp}[\zeta_{n_0+n-1}]_1^{N_{n-1}} = \bigcup_{n=1}^\infty \bigcup_{j=1}^{k_2^n} \operatorname{supp} \xi_j^{N_{n-1}}.
    \end{align*}
    
    We will now show that the sequence $(\alpha(j))_{j \in \mathbb{N}}$ in non-increasing. The only part that does not follow from the choice of the sequences $(\alpha_n(j))_{j=1}^{\infty}$ is that $\alpha(k_{n+1}) \geq \alpha(k_{n+1} + 1)$, that is $\alpha_n(k_2^n) \geq \alpha_{n+1}(1)$, for every $n \in \mathbb{N}$. This follows from the property $S(\zeta_{n_0+n-1}, \zeta_{n_0+n},M_{n-1})$. Indeed, by property (a) and P.4. in \cite[p. 171]{ArgyrosMercourakusTsarpalias} we have that for each $k \in \mathbb{N}$
    \begin{align*}
        [\zeta_{n_0+n-1}]_k^{N_n} = [\zeta_{n_0+n-1}]_{n_k}^{N_{n-1}}
    \end{align*}
    for some increasing sequence of integers $(n_k)_{k \in \mathbb{N}}$. Further, by property (b) of $S(\zeta_{n_0+n-1}, \zeta_{n_0+n},M_{n-1})$ we have
    \begin{align*}
        \zeta_{n+1}^N &= [\zeta_{n_0+n}]_1^{N_n} = \sum_{j=1}^m \beta_j [\zeta_{n_0+n-1}]_j^{N_n} = \sum_{j=1}^m \beta_j [\zeta_{n_0+n-1}]_{n_j}^{N_{n-1}} \\
        &= \sum_{j=1}^m \beta_j \sum_{i=k_{n_j}^{n}+1}^{k_{n_j+1}^{n}} \alpha_{n}(i) \xi_i^{N_{n-1}} \\
        &= \sum_{j=1}^m \sum_{i=k_{n_j}^{n}+1}^{k_{n_j+1}^{n}} \left( \beta_j \alpha_{n}(i) \right) \xi_{k_n+i}^N.
    \end{align*}
    for some $m \in \mathbb{N}$ and a non-increasing sequence $(\beta_j)_{j=1}^m$ satisfying $\beta_1 \leq 1$. As we also have
    \begin{align*}
        \zeta_{n+1}^N = [\zeta_{n_0+n}]_1^{N_n} = \sum_{i=k^{n+1}_1 + 1}^{k_{2}^{n+1}} \alpha_{n+1}(i) \xi_i^{N_n} = \sum_{i=k^{n+1}_1 + 1}^{k_{2}^{n+1}} \alpha_{n+1}(i) \xi_{k_{n+1}+i}^N
    \end{align*}
    and $\xi^N_{j}$, $j \in \mathbb{N}$, have disjoint supports, we get
    \begin{align*}
        \alpha_{n+1}(1) = \beta_1 \alpha_{n}(k_{n_1}^{n}+1) \leq \alpha_{n}(k_{n_1}^{n}+1) \leq \alpha_n(k^n_2),
    \end{align*}
    where the last inequality holds as $n_1 \geq 2$, which in turn follows from the choice of $M_{n-1} = N_{n-1} \setminus \left(\operatorname{supp} [\zeta_{n_0+n-1}]_1^{N_{n-1}} \right)$ -- the set $N_n \in [M_{n-1}]$ cannot contain $\operatorname{supp} [\zeta_{n_0+n-1}]_1^{N_{n-1}}$, and the fact that the sequence $(\alpha_n(j))_{j \in \mathbb{N}}$ in non-increasing.
    
    Hence, for any $n \in \mathbb{N}$
    \begin{align*}
        \zeta_n^N = [\zeta_{n_0+n-1}]_1^{N_{n-1}} = \sum_{j=k_n+1}^{k_{n+1}} \alpha(j) \xi_j^N
    \end{align*}
    and property (b) of $S(\xi,\zeta,M)$ is valid for $N$. Property (a) is also valid as we have already shown:
    \begin{align*}
        N = \bigcup_{n=1}^\infty \operatorname{supp}[\zeta_{n_0+n-1}]_1^{N_{n-1}} = \bigcup_{n=1}^\infty \bigcup_{j=1}^{k_2^n} \operatorname{supp} \xi_j^{N_{n-1}} = \bigcup_{n=1}^\infty \bigcup_{j=1}^{k_2^n} \operatorname{supp} \xi_{k_n+j}^N.
    \end{align*}
    Hence, the induction step for limit ordinals is done and the lemma is proved.
\end{proof}

\begin{prop}\label{PropMonoWBS}
    Let $(x_n)_{n \in \mathbb{N}}$ be a bounded sequence in a Banach space $X$ and $\xi< \zeta < \omega_1$. Then $\widetilde{\operatorname{cca}}_\zeta^s((x_n)_{n \in \mathbb{N}}) \leq \widetilde{\operatorname{cca}}_\xi^s ((x_n)_{n \in \mathbb{N}})$. In particular, for any bounded subset $A$ of $X$ we have $\operatorname{wbs}^s_\zeta(A) \leq \operatorname{wbs}^s_\xi (A)$ and $\operatorname{bs}^s_\zeta(A) \leq \operatorname{bs}^s_\xi(A)$.
\end{prop}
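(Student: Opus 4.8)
The plan is to reduce the statement for $\widetilde{\operatorname{cca}}_\zeta^s$ to the already-established machinery of non-increasing block convex combinations (Lemma \ref{LemmaConvexBlock} and Lemma \ref{LemmaNonincreasingBlockConvex}). Recall that
\begin{align*}
    \widetilde{\operatorname{cca}}_\xi^s ((x_n)_{n \in \mathbb{N}}) = \sup_{M \in [\mathbb{N}]} \inf_{N \in [M]} \operatorname{cca}(\xi^N_n \cdot (x_k)_{k \in \mathbb{N}}),
\end{align*}
so to prove $\widetilde{\operatorname{cca}}_\zeta^s \le \widetilde{\operatorname{cca}}_\xi^s$ it suffices to fix an arbitrary $M \in [\mathbb{N}]$ and produce, for every $\varepsilon > 0$, some $N \in [M]$ with
\begin{align*}
    \operatorname{cca}(\zeta^N_n \cdot (x_k)_{k \in \mathbb{N}}) \le \widetilde{\operatorname{cca}}_\xi^s ((x_n)_{n \in \mathbb{N}}) + \varepsilon.
\end{align*}
Actually one gets a cleaner statement: I would show that for every $M$ there is $N \in [M]$ with $\operatorname{cca}(\zeta^N_n \cdot (x_k)_{k\in\mathbb N}) \le \inf_{N' \in [M]} \operatorname{cca}(\xi^{N'}_n \cdot (x_k)_{k\in\mathbb N})$, which upon taking $\sup_M$ yields the claim.

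The key step is as follows. Fix $M \in [\mathbb{N}]$. By Lemma \ref{LemmaNonincreasingBlockConvex} applied to $\xi \le \zeta$ and $M$, there is $N \in [M]$ such that (a) $N = \bigcup_{k=1}^\infty \operatorname{supp} \xi_{n_k}^M$ for some increasing sequence $(n_k)_{k\in\mathbb N}$ of integers, and (b) the sequence $(\zeta_j^N)_{j \in \mathbb{N}}$ is a non-increasing block convex combination of $(\xi_j^N)_{j \in \mathbb{N}}$. From (b) together with Lemma \ref{LemmaConvexBlock}, applied with $y_j = \xi_j^N \cdot (x_k)_{k\in\mathbb N}$ and $z_j = \zeta_j^N \cdot (x_k)_{k\in\mathbb N}$ (note $\zeta_j^N \cdot (x_k)_{k\in\mathbb N}$ is precisely the corresponding non-increasing block convex combination of the vectors $\xi_j^N \cdot (x_k)_{k\in\mathbb N}$, since taking $\cdot\,(x_k)_{k\in\mathbb N}$ is linear), we obtain
\begin{align*}
    \operatorname{cca}(\zeta_n^N \cdot (x_k)_{k\in\mathbb N}) \le \operatorname{cca}(\xi_n^N \cdot (x_k)_{k\in\mathbb N}).
\end{align*}
Since $N \in [M]$, the right-hand side is at least $\inf_{N' \in [M]} \operatorname{cca}(\xi_{n}^{N'} \cdot (x_k)_{k\in\mathbb N})$... wait, it is \emph{bounded below} by that infimum only in the wrong direction; rather, since $N \in [M]$ we directly have $\inf_{N' \in [M]} \operatorname{cca}(\xi_{n}^{N'} \cdot (x_k)_{k\in\mathbb N}) \le \operatorname{cca}(\xi_n^N \cdot (x_k)_{k\in\mathbb N})$, which is the wrong way. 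The correct argument: having fixed $M$, pick any $\varepsilon > 0$ and choose $M' \in [M]$ with $\operatorname{cca}(\xi_n^{M'} \cdot (x_k)_{k\in\mathbb N}) \le \inf_{N' \in [M]} \operatorname{cca}(\xi_n^{N'}\cdot(x_k)_{k\in\mathbb N}) + \varepsilon$; now apply Lemma \ref{LemmaNonincreasingBlockConvex} to $\xi \le \zeta$ and $M'$ to get $N \in [M'] \subseteq [M]$ with $\zeta_j^N$ a non-increasing block convex combination of $\xi_j^N$ — but this requires $\operatorname{cca}(\xi_n^N \cdot (x_k)_{k\in\mathbb N}) \le \operatorname{cca}(\xi_n^{M'} \cdot (x_k)_{k\in\mathbb N})$, which again is not automatic. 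This is the main obstacle, so I address it below.

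The resolution is property (a) of Lemma \ref{LemmaNonincreasingBlockConvex}: for the $N \in [M']$ it produces, we have $N = \bigcup_k \operatorname{supp} \xi_{n_k}^{M'}$, which means (by P.4 in \cite[p.~171]{ArgyrosMercourakusTsarpalias}) that $\xi_j^N = \xi_{n_j}^{M'}$ for each $j$, so $(\xi_n^N \cdot (x_k)_{k\in\mathbb N})_{n\in\mathbb N}$ is a \emph{subsequence} of $(\xi_n^{M'} \cdot (x_k)_{k\in\mathbb N})_{n\in\mathbb N}$. Now I need the auxiliary fact that $\operatorname{cca}$ does not increase under passing to subsequences of a summability sequence, i.e. $\operatorname{ca}$ of the Cesàro means of a subsequence is $\le \operatorname{ca}$ of the Cesàro means of the whole — but this is false in general for arbitrary subsequences. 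The honest way, matching the paper's style, is to observe that $(\xi_j^N)_j = (\xi_{n_j}^{M'})_j$ is itself a non-increasing block convex combination of $(\xi_j^{M'})_j$ when one \emph{also} uses that the blocks are singletons with coefficient $1$ — and since $(\xi_{n_j}^{M'})_j$ keeps only some of the $\xi^{M'}$'s in increasing order, it is a block convex combination (each block a single term) of $(\xi_j^{M'})_j$ with all coefficients equal to $1$, hence non-increasing trivially within each block, but one must check $1 \ge 1$ across blocks, which holds. Then Lemma \ref{LemmaConvexBlock} gives $\operatorname{cca}(\xi_n^N \cdot (x_k)_{k\in\mathbb N}) \le \operatorname{cca}(\xi_n^{M'} \cdot (x_k)_{k\in\mathbb N})$. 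Chaining the two applications of Lemma \ref{LemmaConvexBlock},
\begin{align*}
    \operatorname{cca}(\zeta_n^N \cdot (x_k)_{k\in\mathbb N}) \le \operatorname{cca}(\xi_n^N \cdot (x_k)_{k\in\mathbb N}) \le \operatorname{cca}(\xi_n^{M'} \cdot (x_k)_{k\in\mathbb N}) \le \inf_{N' \in [M]} \operatorname{cca}(\xi_n^{N'}\cdot(x_k)_{k\in\mathbb N}) + \varepsilon,
\end{align*}
and since $N \in [M]$ this forces $\inf_{N' \in [M]} \operatorname{cca}(\zeta_n^{N'} \cdot (x_k)_{k\in\mathbb N}) \le \inf_{N' \in [M]} \operatorname{cca}(\xi_n^{N'}\cdot(x_k)_{k\in\mathbb N}) + \varepsilon$. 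Letting $\varepsilon \to 0$ and then taking $\sup$ over $M \in [\mathbb{N}]$ yields $\widetilde{\operatorname{cca}}_\zeta^s((x_n)_{n\in\mathbb N}) \le \widetilde{\operatorname{cca}}_\xi^s((x_n)_{n\in\mathbb N})$. The \emph{in particular} statement follows immediately by taking suprema over all sequences (resp.\ all weakly convergent sequences) in $A$ in the definitions of $\operatorname{bs}_\xi^s$ and $\operatorname{wbs}_\xi^s$. I expect the only delicate point to be the bookkeeping in the second application of Lemma \ref{LemmaConvexBlock} — verifying that a subsequence of a Repeated Averages sequence is a non-increasing block convex combination of it (trivially, with singleton blocks) — everything else is a routine assembly of the two preceding lemmas.
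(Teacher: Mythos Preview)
Your overall plan is right and matches the paper up to the decisive point: use Lemma \ref{LemmaNonincreasingBlockConvex} to pass from $\xi$ to $\zeta$ and then apply Lemma \ref{LemmaConvexBlock}. The gap is in your step where you try to control $\operatorname{cca}(\xi_n^{N}\cdot(x_k)_{k\in\mathbb N})$ in terms of $\operatorname{cca}(\xi_n^{M'}\cdot(x_k)_{k\in\mathbb N})$ by declaring the subsequence $(\xi_{n_j}^{M'})_j$ to be a non-increasing block convex combination of $(\xi_j^{M'})_j$ with ``singleton blocks and coefficient $1$''. That is false as soon as the subsequence skips any index: the definition requires $k_1=0$ and a \emph{globally} non-increasing coefficient sequence $(\alpha(j))_{j\in\mathbb N}$, so skipping $j=2$ forces $\alpha(2)=0$ while $\alpha(1)=1$ and $\alpha(3)=1$, which is not non-increasing. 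More to the point, the inequality you want simply fails in general: in $\ell_1$, the sequence $y_{2k-1}=e_k$, $y_{2k}=-e_k$ has $\operatorname{cca}(y_n)=0$, while its subsequence $(y_{2k-1})_k=(e_k)_k$ has $\operatorname{cca}=2$. So Lemma \ref{LemmaConvexBlock} cannot be invoked here, and your $\varepsilon$-approximation argument breaks.

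The paper's proof circumvents exactly this obstacle with an extra step you omitted: before applying Lemma \ref{LemmaNonincreasingBlockConvex} it uses the infinite Ramsey theorem to upgrade the hypothesis ``for every $M$ there is $N\in[M]$ with $\operatorname{cca}(\xi_n^{N}\cdot(x_k)_{k\in\mathbb N})<c$'' to ``for every $M$ there is $N\in[M]$ such that for \emph{all} $L\in[N]$ one has $\operatorname{cca}(\xi_n^{L}\cdot(x_k)_{k\in\mathbb N})<c$'' (the relevant set of $P$'s is shown to be Borel, hence completely Ramsey, and the opposite horn is excluded by assumption). Once this is in place, whatever $L\in[N]$ Lemma \ref{LemmaNonincreasingBlockConvex} produces already satisfies $\operatorname{cca}(\xi_n^{L}\cdot(x_k)_{k\in\mathbb N})<c$, and a single application of Lemma \ref{LemmaConvexBlock} gives $\operatorname{cca}(\zeta_n^{L}\cdot(x_k)_{k\in\mathbb N})\le c$. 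No comparison between $L$ and $M'$ is needed. You should insert this Ramsey stabilization step; the rest of your write-up then goes through.
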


\begin{proof}
    Let $\widetilde{\operatorname{cca}}_\xi^s((x_n)_{n \in \mathbb{N}}) < c$ for some $c > 0$. Then for every $M \in [\mathbb{N}]$ there is $N \in [M]$ such that $\operatorname{cca}\left( \zeta_n^N \cdot (x_k)_{k \in \mathbb{N}} \right) < c$. We will show using the infinite Ramsey theorem \cite[Theorem 10.1.3.]{nigel2006topics} that this implies that for every $M \in [\mathbb{N}]$ there is $N \in [M]$ such that for all $L \in [N]$ we have $\operatorname{cca}\left( \zeta_n^L \cdot (x_k)_{k \in \mathbb{N}} \right) < c$. Fix any $M \in [\mathbb{N}]$ and define
    \begin{align*}
        A_1 &= \left\{P \in [M]: \; \operatorname{cca}\left( \zeta_n^P \cdot (x_k)_{k \in \mathbb{N}} \right) < c \right\} \\
        A_2 &= [M]  \setminus A_1.
    \end{align*}
    The set $A_1$ is Ramsey. Indeed, for $P \in [M]$ we have that $\operatorname{cca} \left( \zeta_n^P \cdot (x_k)_{k \in \mathbb{N}} \right) < c$ if and only if
    \begin{align*}
        \exists m \in \mathbb{N} \; \exists n \in \mathbb{N} \; \forall i \geq m \; \forall j \geq m: \; \norm{\frac{1}{i} \sum_{l = 1}^i \zeta_l^P \cdot (x_k)_{k \in \mathbb{N}} - \frac{1}{j} \sum_{l = 1}^j \zeta_l^P \cdot (x_k)_{k \in \mathbb{N}}} \leq c - \frac{1}{n}
    \end{align*}
    and for any $i,j,n \in \mathbb{N}$ the set
    \begin{align*}
        A(i,j,n) = \left\{ P \in [M]: \; \norm{\frac{1}{i} \sum_{l = 1}^i \zeta_l^P \cdot (x_k)_{k \in \mathbb{N}} - \frac{1}{j} \sum_{l = 1}^j \zeta_l^P \cdot (x_k)_{k \in \mathbb{N}}} \leq c - \frac{1}{n} \right\}
    \end{align*}
    is open by P.3. in \cite[p. 171]{ArgyrosMercourakusTsarpalias}. Hence,
    \begin{align*}
        A_1 = \bigcup_{m \in \mathbb{N}} \bigcup_{n \in \mathbb{N}} \bigcap_{i \geq m} \bigcap_{j \geq m} A(i,j,n)
    \end{align*}
    is Borel, and thus Ramsey. It follows from the infinite Ramsey theorem that there is $N \in [M]$ such that either $[N] \subseteq A_1$ or $[N] \subseteq A_2$. But we have already seen that the latter case is impossible. Hence, $[N] \subseteq A_1$ which is precisely what we wanted to show.
    
    It follows from Lemmata \ref{LemmaConvexBlock} and \ref{LemmaNonincreasingBlockConvex} that there is $L \in [N] \subseteq [M]$ such that $\operatorname{cca}\left( \zeta_n^L \cdot (x_k)_{k \in \mathbb{N}} \right) \leq \operatorname{cca}\left( \xi_n^L \cdot (x_k)_{k \in \mathbb{N}} \right) \leq c$. Therefore, we have found for every $M \in [\mathbb{N}]$ some $L \in [M]$ such that $\operatorname{cca}\left( \zeta_n^L \cdot (x_k)_{k \in \mathbb{N}} \right) \leq c$, which implies the desired inequality $\widetilde{\operatorname{cca}}_\zeta^s ((x_n)_{n \in \mathbb{N}}) \leq c$.
\end{proof}

It follows from Proposition \ref{PropMonoWBS} that the quantities $\operatorname{bs}^s_\xi$ and $\operatorname{wbs}^s_\xi$ are non-increasing with respect to $\xi$. It is unclear if the same holds for the quantities $\operatorname{bs}_\xi$ and $\operatorname{wbs}_\xi$. We do, however, have monotony if $\zeta$ is a finite successor of $\xi$.

\begin{lemma}
    Let $\xi < \omega_1$ and $\zeta = \xi + l$ for some $l \in \mathbb{N}$. Let $(x_n)_{n \in \mathbb{N}}$ be a bounded sequence in a Banach space $X$ and $M \in [\mathbb{N}]$. Then $\operatorname{cca}(\zeta_n^M \cdot (x_k)_{k \in \mathbb{N}}) \leq \operatorname{cca}(\xi_n^M \cdot (x_k)_{k \in \mathbb{N}})$. In particular, $\widetilde{\operatorname{cca}}_\zeta ((x_n)_{n \in \mathbb{N}}) \leq \widetilde{\operatorname{cca}}_\xi ((x_n)_{n \in \mathbb{N}})$, and for any bounded subset $A$ of $X$ we have $\operatorname{bs}_\zeta(A) \leq \operatorname{bs}_\xi(A)$ and $\operatorname{wbs}_\zeta(A) \leq \operatorname{wbs}_\xi(A)$.
\end{lemma}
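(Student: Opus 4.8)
The plan is to reduce everything to the case $l=1$ by an easy induction on $l$, and then to invoke the theory of non-increasing block convex combinations developed above.

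First I would treat the base case $\zeta = \xi+1$. As observed immediately after the definition of a non-increasing block convex combination, the $M$-summability method $([\xi+1]_n^M)_{n \in \mathbb{N}}$ is a non-increasing block convex combination of $(\xi_n^M)_{n \in \mathbb{N}}$; concretely, writing $[\xi+1]_n^M = \sum_{j=k_n+1}^{k_{n+1}} \alpha(j)\,\xi_j^M$ with $k_1 = 0$, the coefficient $\alpha(j)$ equals $1/s_n$ for $j$ in the $n$-th block, where $s_n$ is the block length appearing in the definition of the Repeated Averages, and $s_1 \leq s_2 \leq \cdots$ because the supports of the $\xi_j^M$ are successive. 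Applying the linear map $(a_n)_{n \in \mathbb{N}} \mapsto (a_n)_{n \in \mathbb{N}} \cdot (x_k)_{k \in \mathbb{N}}$ to the identity $[\xi+1]_n^M = \sum_{j=k_n+1}^{k_{n+1}} \alpha(j)\,\xi_j^M$ leaves the partition $(k_n)_{n \in \mathbb{N}}$ and the coefficients $(\alpha(j))_{j \in \mathbb{N}}$ unchanged, and therefore shows that the sequence $\left( [\xi+1]_n^M \cdot (x_k)_{k \in \mathbb{N}} \right)_{n \in \mathbb{N}}$ is a non-increasing block convex combination of $\left( \xi_n^M \cdot (x_k)_{k \in \mathbb{N}} \right)_{n \in \mathbb{N}}$. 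Lemma \ref{LemmaConvexBlock} then gives
\begin{align*}
\operatorname{cca}\left( [\xi+1]_n^M \cdot (x_k)_{k \in \mathbb{N}} \right) \leq \operatorname{cca}\left( \xi_n^M \cdot (x_k)_{k \in \mathbb{N}} \right).
\end{align*}

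For the general case I would iterate the base case. Since $[(\xi+i)+1]_n^M = (\xi+i+1)_n^M$, applying the previous paragraph with $\xi$ replaced successively by $\xi, \xi+1, \dots, \xi+l-1$ yields the chain
\begin{align*}
\operatorname{cca}\left( (\xi+l)_n^M \cdot (x_k)_{k \in \mathbb{N}} \right) \leq \operatorname{cca}\left( (\xi+l-1)_n^M \cdot (x_k)_{k \in \mathbb{N}} \right) \leq \cdots \leq \operatorname{cca}\left( \xi_n^M \cdot (x_k)_{k \in \mathbb{N}} \right),
\end{align*}
which is the asserted inequality. The "in particular" statements then follow by monotonicity of infima and suprema: taking the infimum over $M \in [\mathbb{N}]$ gives $\widetilde{\operatorname{cca}}_\zeta((x_n)_{n \in \mathbb{N}}) \leq \widetilde{\operatorname{cca}}_\xi((x_n)_{n \in \mathbb{N}})$, and taking the supremum over all sequences, resp. all weakly convergent sequences, in $A$ gives $\operatorname{bs}_\zeta(A) \leq \operatorname{bs}_\xi(A)$, resp. $\operatorname{wbs}_\zeta(A) \leq \operatorname{wbs}_\xi(A)$.

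I do not expect a genuine obstacle here; the only point needing (routine) verification is that the block lengths $s_n$ occurring in the definition of $[\xi+1]_n^M$ form a non-decreasing sequence, which is exactly what makes the coefficient sequence $\alpha$ non-increasing and thus lets Lemma \ref{LemmaConvexBlock} apply. Everything else is bookkeeping with the definitions.
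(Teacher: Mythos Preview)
Your proposal is correct and follows essentially the same approach as the paper: induct on $l$, use at each step that $([\xi+1]_n^M)_{n\in\mathbb{N}}$ is a non-increasing block convex combination of $(\xi_n^M)_{n\in\mathbb{N}}$, and apply Lemma~\ref{LemmaConvexBlock}. Your added verification that the block lengths $s_n$ are non-decreasing (hence the coefficients $1/s_n$ non-increasing) and the explicit passage through the linear map $(a_n)\mapsto (a_n)\cdot(x_k)$ are exactly the routine details the paper leaves implicit.
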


\begin{proof}
    This follows easily by induction over $l \in \mathbb{N}$ and the fact that for any $M \in [\mathbb{N}]$ and $l \in \mathbb{N} \cup \{0\}$ the $M$-summability method $([\zeta + l + 1]_n^M)_{n \in \mathbb{N}}$ is a non-increasing block convex combination of the $M$-summability method $([\zeta + l]_n^M)_{n \in \mathbb{N}}$. Therefore, we just need to invoke Lemma \ref{LemmaConvexBlock}.
\end{proof}

We define another quantity for a bounded subset $A$ of a Banach space $X$.

\begin{definition}
Let $A$ be a bounded subset of a Banach space $X$. We define
    \begin{align*}
        \delta_0(A) = \min_{\xi < \omega_1} \operatorname{bs}^s_\xi(A).
    \end{align*}
\end{definition}

This quantity $\delta_0$ is a measure of weak non-compactness for separable sets. To prove this we will first need the following lemma. Notice that the assumptions of Lemma \ref{LemmaBourgain} cannot be met; it is only used to prove a contradiction in the proof of Proposition \ref{PropDeltaSeparable}.

\begin{lemma} \label{LemmaBourgain}
    Let $A$ be a bounded separable relatively weakly compact subset of a Banach space $X$ which satisfies $\delta_0(A) > 0$. Then the canonical basis of $\ell_1$ embeds into $\overline{A}$.
\end{lemma}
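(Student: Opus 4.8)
The plan is to turn the hypothesis $\delta_0(A)>0$, by means of the quantitative inequalities already established, into the much stronger‑looking statement that $\operatorname{sm}_\xi(A)$ stays bounded away from $0$ for \emph{all} $\xi<\omega_1$, and then to run a Bourgain‑type index argument: a suitable tree of finite $\ell_1$‑blocks with nodes in $\overline{A}$ will be forced to have rank exceeding every countable ordinal, hence to be ill‑founded, and an infinite branch of it is the desired copy of the $\ell_1$‑basis sitting inside $\overline{A}$. Since $A$ is separable we may, replacing $X$ by $\overline{\operatorname{span}}\,A$, assume that $X$ is separable (it is automatically complete), so that $\overline{A}$ is a Polish space.

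\textbf{Step 1 (reduction).} By Proposition~\ref{PropMonoWBS} the map $\xi\mapsto\operatorname{bs}^s_\xi(A)$ is non‑increasing, hence — being $[0,\infty)$‑valued on $\omega_1$ — eventually constant; in particular the minimum defining $\delta_0(A)$ exists and $\operatorname{bs}^s_\xi(A)\ge\delta_0(A)$ for every $\xi<\omega_1$. Relative weak compactness of $A$ lets Proposition~\ref{PropCharBS} give $\operatorname{wbs}^s_\xi(A)=\operatorname{bs}^s_\xi(A)\ge\delta_0(A)$, and then the chain~(\ref{star}) of Theorem~\ref{TheoremQuantBS} yields $\operatorname{sm}_{\xi+1}(A)\ge\tfrac14\operatorname{wbs}^s_\xi(A)\ge\tfrac14\delta_0(A)$; combined with the monotonicity of $\operatorname{sm}$ from Lemma~\ref{LemmaMonoWUSSM} this gives $\operatorname{sm}_\xi(A)\ge\tfrac14\delta_0(A)$ for \emph{every} $\xi<\omega_1$. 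Put $c:=\tfrac18\delta_0(A)>0$, so that $\operatorname{sm}_\xi(A)>c$ for all $\xi$; for each $\xi<\omega_1$ fix a sequence $(x^\xi_n)_{n\in\mathbb{N}}$ in $A$ weakly convergent to some $x^\xi$ such that $(x^\xi_n-x^\xi)_{n\in\mathbb{N}}$ generates an $\ell_1^\xi$‑spreading model with constant $c$.

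\textbf{Step 2 (the $\ell_1$‑tree).} For $\delta>0$ let $T_\delta$ be the set of finite sequences $(a_1,\dots,a_m)$ with each $a_i\in\overline{A}$ and $\norm{\sum_{i=1}^m b_ia_i}\ge\delta\sum_{i=1}^m|b_i|$ for all scalars $b_1,\dots,b_m$, partially ordered by end‑extension. Setting the last coefficient equal to $0$ shows $T_\delta$ is closed under initial segments, so it is a tree; and since for each $m$ the function $(a_1,\dots,a_m)\mapsto\inf\{\norm{\sum b_ia_i}:\sum|b_i|=1\}$ is continuous on $\overline{A}^m$ (an infimum over the compact set $\{\sum|b_i|=1\}$ of a jointly continuous function), $T_\delta$ is a closed subtree of the Polish space $\bigsqcup_{m\ge1}\overline{A}^m$. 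An infinite branch of $T_\delta$ is a sequence in $\overline{A}$ dominating the $\ell_1$‑basis with constant $\delta$, hence — $\overline{A}$ being bounded — equivalent to the canonical basis of $\ell_1$; so it suffices to prove that $T_c$ is ill‑founded. Fix $\xi<\omega_1$. For $F=\{n_1<\dots<n_k\}\in\mathcal{S}_\xi$, the spreading‑model inequality, applicable because $F\in\mathcal{S}_\xi$, says precisely that $(x^\xi_{n_1},\dots,x^\xi_{n_k})\in T_c$; and as $\mathcal{S}_\xi$ is hereditary, $(n_1,\dots,n_k)\mapsto(x^\xi_{n_1},\dots,x^\xi_{n_k})$ is a level‑ and extension‑preserving map of the tree of increasing enumerations of members of $\mathcal{S}_\xi$ (well‑founded, since $\mathcal{S}_\xi$ is precompact) into $T_c$. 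Hence the rank of $T_c$ is at least the rank of that Schreier tree; since these ranks are unbounded in $\omega_1$ (a standard feature of the Schreier families, see \cite{ArgyrosMercourakusTsarpalias}), the rank of $T_c$ exceeds every countable ordinal. But a well‑founded closed tree on a Polish space has countable rank (the Kunen--Martin boundedness theorem applied to its extension relation), so $T_c$ cannot be well‑founded; an infinite branch of $T_c$ then gives, by the observation above, a sequence in $\overline{A}$ equivalent to the canonical basis of $\ell_1$, which is what we want.

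\textbf{Main obstacle.} The crux is Step 2, and above all placing the copy of $\ell_1$ \emph{inside $\overline{A}$} rather than merely inside $\overline{\operatorname{span}}\,A$: this works precisely because the spreading‑model vectors obtained in Step 1 are honest subsequences of sequences lying in $A$, so the relevant $\ell_1$‑tree may be taken with all of its nodes in $\overline{A}$. Apart from that the argument rests on two standard external inputs — that the ranks of the Schreier trees $\mathcal{S}_\xi$ are unbounded in $\omega_1$, and the descriptive‑set‑theoretic boundedness fact that a well‑founded closed tree on a Polish space is countably ranked — together with the routine verifications that $T_\delta$ is genuinely closed and that the Schreier‑tree map preserves levels. (Relative weak compactness enters only in Step 1, where it lets Proposition~\ref{PropCharBS} replace $\operatorname{bs}^s_\xi$ by $\operatorname{wbs}^s_\xi$; separability enters only in Step 2, where it makes $\overline{A}$ Polish.)
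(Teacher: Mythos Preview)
Your approach is essentially the paper's: reduce via Proposition~\ref{PropCharBS} and Theorem~\ref{TheoremQuantBS} to $\operatorname{sm}_\xi(A)>c$ for all $\xi$, build the Bourgain $\ell_1$-tree $T_c$ over $\overline{A}$, embed the Schreier families to push its rank past every countable ordinal, and invoke boundedness of closed well-founded trees on a Polish space (the paper cites Bourgain~\cite[Proposition~10]{Bourgain1980} where you cite Kunen--Martin; same ingredient).

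There is, however, a genuine gap --- one the paper's own proof glosses over, but which your careful Step~1 makes visible. You correctly record that it is $(x^\xi_n-x^\xi)$, not $(x^\xi_n)$, that generates the $\ell_1^\xi$-spreading model; this is exactly what the definition of $\operatorname{sm}_\xi$ delivers. Yet in Step~2 you assert that $(x^\xi_{n_1},\dots,x^\xi_{n_k})\in T_c$. Membership in $T_c$ demands $\norm{\sum_i b_i\,x^\xi_{n_i}}\ge c\sum_i|b_i|$, and this does \emph{not} follow from the inequality for the shifted sequence: in general, if $(y_n)$ generates an $\ell_1^\xi$-spreading model there is no reason for $(y_n+x)$ to do so, and the shifted vectors $x^\xi_{n_i}-x^\xi$ need not lie in $\overline{A}$. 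Your ``Main obstacle'' paragraph claims this is precisely what works, but it does not. One clean repair: enlarge the tree to tuples $(a_0,a_1,\dots,a_m)$ with $a_0\in\overline{A}^{w}$, $a_i\in\overline{A}$ for $i\ge1$, and $\norm{\sum_{i\ge1} b_i(a_i-a_0)}\ge c\sum_i|b_i|$; this is still a closed tree on a Polish space, the tuples $(x^\xi,x^\xi_{n_1},\dots,x^\xi_{n_k})$ now honestly belong to it, and an infinite branch yields $(a_i)_{i\ge1}\subseteq\overline{A}$ with $(a_i-a_0)$ equivalent to the $\ell_1$-basis --- whence Rosenthal's theorem together with relative weak compactness of $A$ forces a subsequence of $(a_i)$ itself to be $\ell_1$-equivalent, as required.
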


\begin{proof}
    We can suppose that $A \subseteq B_X$. Let $c>0$ be such that $\delta_0(A) > 4c$. We define a tree $\mathcal{T}$ on $X$ as
    \begin{align*}
        \mathcal{T} = \left\{(x_1,\dots,x_n) \in \overline{A}^n: \; \norm{\sum_{j=1}^n a_j x_j} \geq c \sum_{j=1}^n |a_j| \; \text{for all } (a_j)_{j=1}^n \in \mathbb{R}^n \right\}.
    \end{align*}
    Note that this is a modification of the tree $\mathcal{T}(X,c)$, used by Bourgain \cite{Bourgain1980} to define the $\ell_1$-index, that is made only of sequences in $\overline{A}$ instead of $B_X$. We will further use the terminology from \cite{Bourgain1980}. If we can show that $\mathcal{T}$ is ill-founded, any infinite branch of $\mathcal{T}$ can serve as an isomorphic copy of the canonical basis of $\ell_1$ and we are done. As $\mathcal{T}$ is obviously a closed tree, it is enough to show that the order of $\mathcal{T}$ is equal to $\omega_1$ and invoke \cite[Proposition 10]{Bourgain1980}.
    
    Fix $\xi < \omega_1$. As $\operatorname{bs}^s_\xi(A) > 4c$, we can use Proposition \ref{PropCharBS} and Theorem \ref{TheoremQuantBS} to find a sequence $(x_n)_{n \in \mathbb{N}}$ in $A$ which generates an $\ell_1^{\xi+1}$-spreading model with constant $c$. This implies that
    \begin{align*}
        \left\{ (x_n)_{n \in F}: \; F \in \mathcal{S}_{\xi+1} \right\} \subseteq \mathcal{T}.
    \end{align*}
    It follows from \cite[Lemma 4.10.]{AlspachArgyros} that the order of $\mathcal{S}_{\xi+1}$ (as a tree on $\mathbb{N}$) is equal to $\omega^{\xi+1}$. It is not hard to see that this implies that the order of $\mathcal{T}$ is at least $\omega^{\xi+1}$. But $\xi<\omega_1$ was arbitrary, and hence the order of $\mathcal{T}$ is $\omega_1$.
\end{proof}

\begin{prop} \label{PropDeltaSeparable}
    Let $A$ be a bounded separable subset of a Banach space $X$. Then $\delta_0(A) = 0$ if and only if $A$ is relatively weakly compact.
\end{prop}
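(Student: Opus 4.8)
The plan is to prove the two implications separately, reducing each of them to results already established in the paper; the bulk of the real work has been isolated in Lemma \ref{LemmaBourgain} and Theorem \ref{TheoremBSaWBS}, so what remains is mostly bookkeeping.

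First I would dispatch the implication ``$\delta_0(A)=0 \Rightarrow A$ relatively weakly compact'' by contraposition, and here separability is not needed. Suppose $A$ is not relatively weakly compact, so that $\operatorname{wck}_X(A)>0$ (since $\operatorname{wck}_X$ vanishes exactly on relatively weakly compact sets). By Theorem \ref{TheoremBSaWBS} we have $\operatorname{wck}_X(A)\le \operatorname{bs}_\xi(A)\le \operatorname{bs}^s_\xi(A)$ for every $\xi<\omega_1$, and since $\delta_0(A)=\min_{\xi<\omega_1}\operatorname{bs}^s_\xi(A)$ this gives $\operatorname{wck}_X(A)\le \delta_0(A)$, hence $\delta_0(A)>0$. (Equivalently: $\operatorname{wck}_X(A)\le\delta_0(A)$ always holds.)

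For the converse, ``$A$ relatively weakly compact $\Rightarrow \delta_0(A)=0$'', I would again argue by contraposition, and this is where separability enters. Assume $A$ is bounded, separable, and $\delta_0(A)>0$; suppose towards a contradiction that $A$ is also relatively weakly compact. Then $A$ meets all the hypotheses of Lemma \ref{LemmaBourgain} (note that $\operatorname{bs}^s_\xi(A)\ge\delta_0(A)>0$ for \emph{every} $\xi<\omega_1$, which is exactly what that lemma consumes), so the canonical basis of $\ell_1$ embeds into $\overline{A}$: there is a sequence $(x_n)_{n\in\mathbb{N}}$ in the norm closure $\overline{A}$ equivalent to the unit vector basis of $\ell_1$. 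On the other hand $\overline{A}$ is contained in the weak closure of $A$, which is weakly compact by assumption, so by the Eberlein--\v{S}mulian theorem $(x_n)_{n\in\mathbb{N}}$ admits a weakly convergent, in particular weakly Cauchy, subsequence. But every subsequence of $(x_n)_{n\in\mathbb{N}}$ is still equivalent to the $\ell_1$-basis, and no sequence equivalent to the $\ell_1$-basis can be weakly Cauchy: the $\ell_1$-basis itself is not weakly Cauchy (test against $((-1)^n)_{n}\in\ell_\infty=\ell_1^*$), and this property is transported by isomorphisms, which are weak-to-weak continuous. This contradiction completes the proof.

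I do not expect a genuine obstacle here, since the hard analytic content lives in Lemma \ref{LemmaBourgain} (built on Proposition \ref{PropCharBS}, Theorem \ref{TheoremQuantBS} and the Schreier-family index computation) and in Theorem \ref{TheoremBSaWBS}. The only step demanding a little care is the very last one of the third paragraph: one must confirm that a copy of the $\ell_1$-basis inside $\overline{A}$ really does violate relative weak compactness of $A$, which is precisely the combination of Eberlein--\v{S}mulian with the fact that the $\ell_1$-basis has no weakly Cauchy subsequence.
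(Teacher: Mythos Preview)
Your proof is correct and follows essentially the same route as the paper: one direction uses Theorem \ref{TheoremBSaWBS} to bound $\operatorname{wck}_X(A)$ by $\delta_0(A)$, and the other direction invokes Lemma \ref{LemmaBourgain} to produce an $\ell_1$-sequence in $\overline{A}$, contradicting relative weak compactness. The only difference is that you spell out in detail the Eberlein--\v{S}mulian argument for why an $\ell_1$-basis in $\overline{A}$ is incompatible with relative weak compactness, whereas the paper leaves this implicit.
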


\begin{proof}
    If $A$ is not relatively weakly compact, then $0 < \operatorname{wck}_X(A) \leq \operatorname{bs}^s_\xi(A)$ for all $\xi<\omega_1$ by the virtue of Theorem \ref{TheoremBSaWBS}, and therefore $\delta_0(A) > 0$.
    
    On the other hand, let $A$ be relatively weakly compact. Let us assume for a contradiction that $\delta_0(A) > 0$. It follows from Lemma \ref{LemmaBourgain} that $\overline{A}$ contains a sequence equivalent to the canonical basis of $\ell_1$ which contradicts the relative weak compactness of $A$. Hence, $\delta_0(A) = 0$ and we are done.
\end{proof}

Note that separability of $A$, was essential in the proof of the preceding theorem, as the result of Bourgain \cite{Bourgain1980} (Lemma \ref{LemmaBourgain}) relies on an argument based on trees which is valid only in separable spaces. We will illustrate the necessity of separability for $\delta_0$ to be a measure of weak non-compactness in Example \ref{ExampleDeltaNonseparable} below. However, the quantity $\delta_0$ can be modified to be a measure of weak non-compactness.

\begin{definition}
    Let $A$ be a bounded subset of a Banach space $X$. We define
    \begin{align*}
        \delta(A) = \sup \{\delta_0(B): \; B \subseteq A \text{ separable}\}.
    \end{align*}
\end{definition}

\begin{prop} \label{PropDelta}
    Let $A$ be a bounded subset of a Banach space $X$. Then $\delta(A) = 0$ if and only if $A$ is relatively weakly compact.
\end{prop}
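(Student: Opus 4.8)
The plan is to reduce the statement to the already-proved separable case, Proposition~\ref{PropDeltaSeparable}, by means of the Eberlein-Šmulyan theorem; no new machinery is needed.

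\textbf{The easy implication.} First I would suppose that $A$ is relatively weakly compact. Then every subset of $A$, and in particular every separable $B \subseteq A$, is relatively weakly compact, so Proposition~\ref{PropDeltaSeparable} gives $\delta_0(B) = 0$ for each such $B$. Taking the supremum over all separable $B \subseteq A$ in the definition of $\delta$ yields $\delta(A) = 0$.

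\textbf{The converse.} Now assume that $A$ is not relatively weakly compact. By the Eberlein-Šmulyan theorem there is a sequence $(x_n)_{n \in \mathbb{N}}$ in $A$ with no weakly convergent subsequence. I would set $B = \{x_n : n \in \mathbb{N}\}$. Then $B$ is a countable, hence separable, subset of $A$, and $B$ is itself not relatively weakly compact: if $\overline{B}^{\,w}$ were weakly compact, the Eberlein-Šmulyan theorem would produce a weakly convergent subsequence of $(x_n)_{n \in \mathbb{N}}$, contradicting the choice of the sequence. Applying Proposition~\ref{PropDeltaSeparable} to the separable, non-relatively-weakly-compact set $B$, one obtains $\delta_0(B) > 0$, whence $\delta(A) \geq \delta_0(B) > 0$, and the proof is complete.

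I do not expect a genuine obstacle here; the two points that need a moment of care are the (routine) Eberlein-Šmulyan argument that the countable set $B$ inherits the failure of relative weak compactness from $A$, and the implicit fact that $\delta_0$ is well defined -- the function $\xi \mapsto \operatorname{bs}^s_\xi(A)$ is non-increasing by Proposition~\ref{PropMonoWBS}, hence eventually constant on the countable ordinals, so the minimum in the definition of $\delta_0$ is attained.
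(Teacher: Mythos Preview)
Your proof is correct and follows essentially the same approach as the paper: both reduce to Proposition~\ref{PropDeltaSeparable} via the Eberlein--\v{S}mulyan characterization that $A$ is relatively weakly compact if and only if every countable (hence separable) subset of $A$ is. The paper states this reduction in a single sentence, while you have spelled out both directions explicitly; your additional remark on why the minimum defining $\delta_0$ is attained is a helpful aside but not needed for the argument.
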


\begin{proof}
    It follows from the Eberlein-Šmulyan theorem that $A$ is relatively weakly compact if and only if each separable (or even countable) subset a $A$ is relatively weakly compact. Hence, the proposition follows from Proposition \ref{PropDeltaSeparable}.
\end{proof}

\section{Examples} \label{section:Examples}

In this section we investigate, whether the inequalities of Theorem \ref{TheoremQuantBS} and Theorem \ref{TheoremBSaWBS} are optimal and whether they can be strict. We begin with Theorem \ref{TheoremBSaWBS}, which stated that for any $\xi < \omega_1$ and any bounded set $A$ in some Banach space $X$ we have
\begin{align*}
    \max \{\operatorname{wck}_X(A),\operatorname{wbs}_\xi(A)\} \leq \operatorname{bs}_\xi(A) \leq \operatorname{bs}_\xi^s(A) \leq \beta(A).
\end{align*}

We will look at the following examples of classical spaces:

\begin{itemize}
    \item If $A = B_{C[0,1]}$, then $\operatorname{wbs}_{\xi} (A) = \beta(A) = 2$ as the space $C[0,1]$ contains the Schreier space of order $\xi$, see Example \ref{ExampleXiSchreier} below (in fact, it contains any separable Banach space). Hence,
    \begin{align*}
        \max \{\operatorname{wck}_{C[0,1]}(A),\operatorname{wbs}_\xi(A)\} = \operatorname{bs}_\xi(A) = \operatorname{bs}^s_\xi(A) = \beta(A).
    \end{align*}
    \item If $A = B_{\ell_1}$, then $\operatorname{wbs}_\xi (A) = 0$ as there are no nontrivial weakly null sequences in $\ell_1$. Further, $\operatorname{wck}_{\ell_1}(A) = 1$, as $\ell_1$ is not reflexive, and $\operatorname{bs}_{\xi}(A) = \beta(A) = 2$ (the fact that $\operatorname{bs}_{\xi} (A) = 2$ is witnessed by the canonical basis and $\operatorname{bs}_{\xi}(A) \leq \beta (A) \leq 2$ by Theorem \ref{TheoremBSaWBS} and the triangle inequality). Hence,
    \begin{align*}
        \max \{\operatorname{wck}_{\ell_1}(A),\operatorname{wbs}_\xi(A)\} < \operatorname{bs}_\xi(A) = \operatorname{bs}_\xi^s(A) = \beta(A).
    \end{align*}
    \item If $A = B_{c_0}$, then $\operatorname{wbs}_\xi(A) = 0$ as $c_0$ has the weak Banach-Saks property, and thus also the weak $\xi$-Banach-Saks property, by \cite{Farnum1974}. Further, $\operatorname{wck}_{c_0}(A) = 1$, as $c_0$ is not reflexive, and $\beta(A) = 2$, as witnessed by the sequence $x_n = e_1 + \cdots + e_n - e_{n-1}$. The quantity $\operatorname{bs}_\xi(A)$ is harder to compute. It follows from \cite[Theorem 5.2.]{Bendov__2015} that $\operatorname{bs}_0 (A) = \operatorname{bs}_0^s(A) \leq 1$. Hence, by Proposition \ref{PropMonoWBS} we have $\operatorname{bs}_\xi(A) \leq \operatorname{bs}_\xi^s(A) \leq \operatorname{bs}_0^s(A) \leq 1$. On the other hand $\operatorname{bs}^s_\xi(A) \geq \operatorname{bs}_\xi(A) \geq \operatorname{wck}_{c_0}(A) = 1$, and therefore
    \begin{align*}
        \max \{\operatorname{wck}_{c_0}(A),\operatorname{wbs}_\xi(A)\} = \operatorname{bs}_\xi(A) = \operatorname{bs}^s_\xi(A) < \beta(A).
    \end{align*}
\end{itemize}

So, the inequalities of Theorem \ref{TheoremBSaWBS} are optimal and, possibly except the inequality $\operatorname{bs}_\xi(A) \leq \operatorname{bs}_\xi^s(A)$, can be strict. We proceed with Theorem \ref{TheoremQuantBS}, which stated that for any $\xi < \omega_1$ and any bounded subset $A$ of some Banach space $X$ we have

\begin{align*}
    2\operatorname{sm}_{\xi+1} (A) \leq \operatorname{wbs}_\xi (A) \leq \operatorname{wbs}_\xi^s(A) \leq 2 \operatorname{wus}_{\xi+1} (A) \leq 4 \operatorname{sm}_{\xi+1} (A).
\end{align*}

\begin{example} \label{ExampleXiSchreier}
    Let $\xi < \omega_1$ and $X_\xi$ denote the Schreier space of order $\xi$, that is the completion of $c_{00}$ under the norm
    \begin{align*}
        \norm{x} = \sup_{F \in \mathcal{S}_\xi} \norm{x \restriction F}_{\ell_1}.
    \end{align*}
    Where $x \restriction F$ denotes the sequence $(y_i)_{i \in \mathbb{N}}$ where $y_i = x_i$ for $i \in F$ and $y_i = 0$ otherwise. It can be shown using classical methods that the canonical sequence $(e_n)_{n \in \mathbb{N}}$ of $c_{00}$ is a normalized 1-unconditional basis of $X_\xi$. Further, the Bourgain's $\ell_1$-index of $X_\xi$ is countable (see \cite[Remmark 5.21.]{JuddOdell1998}), and hence $X_\xi$ does not contain $\ell_1$ by the result of Bourgain \cite{Bourgain1980}. Therefore, the basis $(e_n)_{n \in \mathbb{N}}$ is shrinking (see e.g. \cite[Theorem 3.3.1.]{nigel2006topics}) and in particular weakly null.
    
    Now let us consider $A = \{e_n: \; n \in \mathbb{N}\}$ as a bounded subset of $X_{\xi+1}$. We will show that
    \begin{enumerate}[(i)]
        \item $\operatorname{sm}_{\xi+1} (A) = 1$,
        \item $\operatorname{wus}_{\xi+1} (A) = 1$,
        \item $\operatorname{wbs}_\xi(A) = \operatorname{wbs}_\xi^s(A) = 2$.
    \end{enumerate}
    
    For any $F \in \mathcal{S}_{\xi+1}$ and $(a_n)_{n \in F} \in \mathbb{R}^F$ we have
    \begin{align*}
        \norm{\sum_{n \in F} a_n e_n} \geq \sum_{n \in F} |a_n|
    \end{align*}
    by the very definition of the norm of $X_{\xi+1}$. On the other hand, as $A$ is a subset of $B_{X_{\xi+1}}$, we get that $\operatorname{sm}_{\xi+1}(A) \leq 1$ by the triangle inequality. Hence, (i) is proved.
    
    We again notice that $A \subseteq B_{X_{\xi+1}}$, and thus $\operatorname{wus}_{\xi+1}(A) \leq 1$. On the other hand, we will show that for any $0<c<1$ we have $\mathcal{S}_{\xi+1} \subseteq \mathcal{F}_c = \mathcal{F}_c((e_n)_{n \in \mathbb{N}})$. Take any $F = (n_1,\dots,n_k) \in \mathcal{S}_{\xi+1}$ and define $x^* = e_{n_1}^* + \cdots + e_{n_k}^*$. Then for any $x = (x_n)_{n \in \mathbb{N}} \in X_{\xi+1}$ we have
    \begin{align*}
        |x^*(x)| = \left| \sum_{j \in F} x_j \right| \leq \sum_{j \in F} |x_j| = \norm{x \restriction F}_{\ell_1} \leq \norm{x}.
    \end{align*}
    Hence, $x^* \in B_{X_{\xi+1}^*}$. It follows, as  $x^*(e_{n_j}) = 1$ for $j = 1,\dots,k$, that $F \in \mathcal{F}_c$. We have proved that $(e_n)_{n \in \mathbb{N}}$ is $(\xi+1,c)$-large for any $0<c<1$, and thus that $\operatorname{wus}_{\xi+1}(A) \geq 1$. Therefore, (ii) is proved.
    
    (iii) now easily follows from Theorem \ref{TheoremQuantBS}.
\end{example}

\begin{example} \label{ExampleXiSchreierStar}
    Let $\xi < \omega_1$. We will consider an equivalent norm on the Schreier space $X_{\xi}$ of order $\xi$, namely
    \begin{align*}
        \norm{x}_* = \max \left\{\norm{x^+}, \norm{x^-} \right\},
    \end{align*}
    where $\norm{\cdot}$ is the norm defined in Example \ref{ExampleXiSchreier} and $x^\pm = (x_n^\pm)_{n \in \mathbb{N}}$ for $x = (x_n)_{n \in \mathbb{N}}$. Then $\norm{x}_* \leq \norm{x} \leq 2 \norm{x}_*$ for each $x \in X_{\xi}$ and $\norm{y}_* = \norm{y}$ for all $y$ in the positive cone of $X_{\xi}$ (that is $y$ with non-negative coordinates). In particular, $(e_n)_{n\in \mathbb{N}}$ is a weakly null normalized sequence in $(X_{\xi},\norm{\cdot}_*)$. Consider again $A = \{e_n: \; n \in \mathbb{N}\}$ as a bounded subset of $(X_{\xi+1},\norm{\cdot}_*)$. We will show the following:
    \begin{enumerate}[(i)]
        \item $\operatorname{sm}_{\xi+1} (A) = \frac{1}{2}$,
        \item $\operatorname{wus}_{\xi+1} (A) = 1$,
        \item $\operatorname{wbs}_\xi(A) = \operatorname{wbs}_\xi^s(A) = 1$.
    \end{enumerate}
    
    Fix any $F \in \mathcal{S}_{\xi+1}$ and $(a_n)_{n \in F} \in \mathbb{R}^F$. Then
    \begin{align*}
        \norm{\sum_{n \in F} a_n^+ e_n} \geq \sum_{n \in F} a_n^+ \hspace{1cm} \text{and} \hspace{1cm} \norm{\sum_{n \in F} a_n^- e_n} \geq \sum_{n \in F} a_n^-,
    \end{align*}
    as $F \in \mathcal{S}_{\xi+1}$. But then
    \begin{align*}
        \norm{\sum_{n \in F} a_n e_n}_* &= \max \left\{ \norm{\sum_{n \in F} a_n^+ e_n}, \norm{\sum_{n \in F} a_n^- e_n} \right\} \\ &\geq \max \left\{\sum_{n \in F} a_n^+, \sum_{n \in F} a_n^- \right\} \geq \frac{1}{2} \sum_{n \in F} |a_n|.
    \end{align*}
    Hence, $\operatorname{sm}_{\xi+1} (A) \geq \frac{1}{2}$. To show the other inequality it is enough to show that $\operatorname{sm}_1(A) \leq \frac{1}{2}$ and use the monotony provided by Lemma \ref{LemmaMonoWUSSM}. Let us have an arbitrary sequence $(f_n)_{n \in \mathbb{N}}$ in $A$. Note that the set $F = \{2,3\}$ belongs to the Schreier family $\mathcal{S}_1$. We define $(a_k)_{k \in F} \in \mathbb{R}^F$ by setting $a_2 = 1$, $a_3 = -1$. If $f_2 = f_3$, then
    \begin{align*}
        \norm{\sum_{k \in F} a_k f_k}_* = \norm{f_2 - f_3}_* =  0 \hspace{1cm} \text{but} \hspace{1cm} \sum_{k \in F} |a_k| = 2
    \end{align*}
    and $(f_n)_{n \in \mathbb{N}}$ cannot generate an $\ell_1^1$-spreading model. If $f_2 \neq f_3$, then
    \begin{align*}
        \norm{\sum_{k \in F} a_k f_k}_* = \norm{f_2 - f_3}_* =  1 \hspace{1cm} \text{but} \hspace{1cm} \sum_{k \in F} |a_k| = 2
    \end{align*}
    and $(f_n)_{n \in \mathbb{N}}$ cannot generate an $\ell_1^1$-spreading model with constant greater than $\frac{1}{2}$. In any case, we have shown that $\operatorname{sm}_1(A) \leq \frac{1}{2}$ and (i) is proved.
    
    Now we proceed with (ii). First we notice that $A \subseteq B_{X_{\xi+1}}$, and thus $\operatorname{wus}_{\xi+1}(A) \leq 1$. On the other hand, we will show that for $0<c<1$ we have $\mathcal{S}_{\xi+1} \subseteq \mathcal{F}_c = \mathcal{F}_c((e_n)_{n \in \mathbb{N}})$. Take any $F = (n_1,\dots,n_k) \in \mathcal{S}_{\xi+1}$ and define $x^* = e_{n_1}^* + \cdots + e_{n_k}^*$. Then for any $x = (x_j)_{j \in \mathbb{N}} \in X_{\xi+1}$ we have
    \begin{align*}
        |x^*(x)| = \left| \sum_{j \in F} x_j \right| \leq \max \left\{ \sum_{j \in F} x_j^+, \sum_{j \in F} x_j^- \right\} \leq \max \{\norm{x^+}, \norm{x^-}\} = \norm{x}_*.
    \end{align*}
    Hence, $x^* \in B_{X_{\xi+1}^*}$. But $x^*(e_{n_j}) = 1 > c$ for $j = 1,\dots,k$, and thus $F \in \mathcal{F}_c$. We have shown that $(e_n)_{n \in \mathbb{N}}$ is $(\xi+1,c)$-large for any $0<c<1$, which implies that $\operatorname{wus}_{\xi+1}(A) \geq 1$. But then $\operatorname{wus}_{\xi+1}(A) = 1$ and (ii) is proved.
    
    Finally, we prove (iii). It follows from (i) and Theorem \ref{TheoremQuantBS} that $\operatorname{wbs}_\xi(A) \geq 1$. The inequality $\operatorname{wbs}_\xi^s(A) \leq 1$ follows from the fact that for any sequence $(x_n)_{n \in \mathbb{N}}$ in $A$, any $N \in [\mathbb{N}]$ and any $k<l \in \mathbb{N}$ we have
    \begin{align*}
        &\norm{\frac{1}{k} \sum_{j=1}^k \xi_j^N \cdot (x_n)_{n \in \mathbb{N}} - \frac{1}{l} \sum_{j=1}^l \xi_j^N \cdot (x_n)_{n \in \mathbb{N}}}_* \\
        &= \max \left\{ \norm{\left(\frac{1}{k} - \frac{1}{l}\right) \sum_{j=1}^k \xi_j^N \cdot (x_n)_{n \in \mathbb{N}}}, \norm{\frac{1}{l} \sum_{j=k+1}^l \xi_j^N \cdot (x_n)_{n \in \mathbb{N}}} \right\} \leq 1,
    \end{align*}
    where the first equality holds as the summability methods $(\xi_j^N)_{j \in \mathbb{N}}$ have non-negative coefficients and the last inequality follows from the triangle inequality.
\end{example}

It follows from Example \ref{ExampleXiSchreier} and Example \ref{ExampleXiSchreierStar} that the inequalities of Theorem \ref{TheoremQuantBS} are optimal and the second and third inequalities may be strict. We note that in both of these examples we have $\operatorname{wbs}_\xi (A) = \operatorname{wbs}_\xi^s(A) = 2\operatorname{sm}_{\xi+1}(A)$. We do not know if these inequalities can be strict.

In \cite{Bendov__2015} the authors asked, whether for a bounded set $A$ in a Banach space $X$ it is necessarily true that
\begin{align*}
    \operatorname{wbs}(A) = 2\operatorname{sm}(A) = 2\operatorname{wus}(A).
\end{align*}
(For the definition of these quantities see \cite{Bendov__2015}, note that $\operatorname{wbs}(A) = \operatorname{wbs}_0(A)$, $\operatorname{sm}(A) = \operatorname{sm}_1(A)$ and $\operatorname{wus}(A) = \operatorname{wus}_1(A)$ in our notation). Example \ref{ExampleXiSchreierStar} answers this question negatively.

In the next example we will demonstrate the need of separability in Proposition \ref{PropDeltaSeparable}. Our non-separable space will be the $\ell_2$-sum of the Schreier-Baernstein spaces, which are, in a way, reflexive versions of the Schreier spaces defined in Example \ref{ExampleXiSchreier}.

\begin{example} \label{ExampleDeltaNonseparable}
    There is a non-separable reflexive Banach space $X$ with $\delta_0(B_X) = 2$. That is, $\delta_0$ is not a measure of weak non-compactness on $X$.
\end{example}

\begin{proof}
    For $\xi < \omega_1$ let us consider the Schreier-Baernstein space $X_\xi^2$, that is the completion of $c_{00}$ under the norm
    \begin{align*}
        \norm{x}_{X_\xi^2} = \sup \left\{ \left( \sum_{j=1}^n (\sum_{i \in F_j} |x_i|)^2 \right)^{\frac{1}{2}}: \; F_1 < F_2 < \dots < F_n \in \mathcal{S}_\xi \right\}.
    \end{align*}
    Then the canonical sequence $(e_n)_{n \in \mathbb{N}}$ of $c_{00}$ is a shrinking boundedly-complete basis of $X_\xi^2$, see \cite[Lemma 3.2.]{Causey2015}. In particular, $(e_n)_{n \in \mathbb{N}}$ is weakly null. It also immediately follows from the definition of the norm $\norm{\cdot}_{X_\xi^2}$ that $(e_n)_{n \in \mathbb{N}}$ generates an $\ell_1^\xi$-spreading model with constant $1$.
    
    Let us now consider the $\ell_2$-sum of the spaces $X_\xi^2$,
    \begin{align*}
        X = \ell_2-\bigoplus_{\xi < \omega_1} X_\xi^2.
    \end{align*}
    Then $X$ is a non-separable reflexive Banach space, as the spaces $X_\xi^2$ are reflexive by the result of James, see e.g. \cite[Theorem 3.2.13.]{nigel2006topics}. It follows that $B_X$ is weakly compact. But $\operatorname{sm}_\xi(B_X) \geq 1$ for all $\xi < \omega_1$, as $B_X$ contains isometric copies of the canonical bases of the spaces $X_\xi^2$. It follows from Theorem \ref{TheoremQuantBS} and Proposition \ref{PropCharBS} that $\operatorname{bs}^s_\xi(B_X) \geq 2$. The other inequality is trivial, hence, $\operatorname{bs}^s_\xi(B_X) = 2$ for all $\xi < \omega_1$, and thus $\delta_0(B_X) = 2$.
\end{proof}

\section{Remarks and open problems} \label{section:Remarks}

First, let us show that the quantities $\operatorname{sm}_\xi$ and $\operatorname{wus}_\xi$ do not depend on the choice of successor ordinals made in the definition of the Schreier hierarchy.

\begin{lemma} \label{LemmaDepend}
    Let $(\mathcal{S}_\xi)_{\xi < \omega_1}$ and $(\mathcal{G}_\xi)_{\xi < \omega_1}$ be two Schreier hierarchies with potentially different choices of sequences of successor ordinals defining the families $\mathcal{S}_\xi$ and $\mathcal{G}_\xi$ for limit ordinals $\xi$. Let $(x_n)_{n \in \mathbb{N}}$ be a weakly null sequence in a Banach space $X$ and $c > 0$.
    \begin{itemize}
        \item If $(x_n)_{n \in \mathbb{N}}$ generates an $\ell_1^\xi$-spreading model with respect to $\mathcal{S}_\xi$ and with constant $c$, then there is $M \in [\mathbb{N}]$ such that $(x_n)_{n \in M}$ generates an $\ell_1^\xi$-spreading model with respect to $\mathcal{G}_\xi$ and with constant $c$.
        
        \item If $(x_n)_{n \in \mathbb{N}}$ in $(\xi,c)$-large with respect to $\mathcal{S}_\xi$, then there is $N \in [\mathbb{N}]$ such that $(x_n)_{n \in N}$ is $(\xi,c)$-large with respect to $\mathcal{G}_\xi$.
    \end{itemize}
\end{lemma}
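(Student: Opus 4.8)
The plan is to reduce both statements to a single combinatorial fact, which I will call the \emph{comparison lemma}: for any two Schreier hierarchies $(\mathcal{S}_\xi)_{\xi<\omega_1}$ and $(\mathcal{G}_\xi)_{\xi<\omega_1}$ and any $\xi<\omega_1$ there is $M\in[\mathbb{N}]$ with $\mathcal{G}_\xi^M\subseteq\mathcal{S}_\xi$. Granting this, the first bullet is immediate: pick such an $M=(m_i)_{i\in\mathbb{N}}$; for $F\in\mathcal{G}_\xi$ and $(\alpha_i)_{i\in F}\in\mathbb{R}^F$ the set $G=(m_i)_{i\in F}$ lies in $\mathcal{G}_\xi^M\subseteq\mathcal{S}_\xi$, so setting $\beta_{m_i}=\alpha_i$ and applying the hypothesis to $G$ gives $\norm{\sum_{i\in F}\alpha_i x_{m_i}}=\norm{\sum_{g\in G}\beta_g x_g}\geq c\sum_{g\in G}|\beta_g|=c\sum_{i\in F}|\alpha_i|$, i.e.\ $(x_n)_{n\in M}$ generates an $\ell_1^\xi$-spreading model with respect to $\mathcal{G}_\xi$ and with constant $c$. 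For the second bullet, recall (as in the proof of Lemma \ref{LemmaMonoWUSSM}) that $(\xi,c)$-largeness of $(x_n)_{n\in\mathbb{N}}$ with respect to $\mathcal{S}_\xi$ is equivalent to $\mathcal{S}_\xi\subseteq\mathcal{F}_c((x_n-x)_{n\in L})$ for some $L\in[\mathbb{N}]$; choosing $P$ with $\mathcal{G}_\xi^P\subseteq\mathcal{S}_\xi$ we get $\mathcal{G}_\xi^P\subseteq\mathcal{F}_c((x_n-x)_{n\in L})$, so $N:=L$ witnesses that $(x_n)_{n\in N}$ is $(\xi,c)$-large with respect to $\mathcal{G}_\xi$.

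It remains to prove the comparison lemma, and I would do this by transfinite induction on $\xi$. For $\xi=0$ take $M=\mathbb{N}$. For $\xi=\zeta+1$, if $\mathcal{G}_\zeta^M\subseteq\mathcal{S}_\zeta$ then already $\mathcal{G}_{\zeta+1}^M\subseteq\mathcal{S}_{\zeta+1}$: every $F\in\mathcal{G}_{\zeta+1}$ is a union $\bigcup_{i=1}^n F_i$ with $n\leq F_1<\cdots<F_n$, $F_i\in\mathcal{G}_\zeta$, and then $(m_k)_{k\in F}=\bigcup_{i=1}^n(m_k)_{k\in F_i}$ is a union of $n$ successive members of $\mathcal{S}_\zeta$ whose first block has minimum $m_{\min F_1}\geq\min F_1\geq n$. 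I also record the elementary fact, needed below, that $\mathcal{G}_\zeta^L\subseteq\mathcal{S}_\zeta$ passes to any $L'\in[L]$, since moving from $L$ to $L'$ replaces each set $(l_j)_{j\in G}$ by $(l_j)_{j\in\tau(G)}$ for an increasing $\tau$ with $\tau(j)\geq j$, and $\tau(G)\in\mathcal{G}_\zeta$ by the spreading property of $\mathcal{G}_\zeta$.

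The limit case is the heart of the argument. Write $\xi=\sup_n\xi_n$ for the sequence defining $\mathcal{S}_\xi$ and $\xi=\sup_n\eta_n$ for the one defining $\mathcal{G}_\xi$. For each $n$ we have $\eta_n<\xi$, so the induction hypothesis yields $L_n$ with $\mathcal{G}_{\eta_n}^{L_n}\subseteq\mathcal{S}_{\eta_n}$, and \cite[Lemma 2.1.8.(a)]{ArgyrosMercourakusTsarpalias} yields $p_n$ such that every $G\in\mathcal{S}_{\eta_n}$ with $p_n\leq\min G$ lies in $\mathcal{S}_\xi$. Using the inheritance remark we may assume $L_1\supseteq L_2\supseteq\cdots$, and then I would pick $M=(m_k)_{k\in\mathbb{N}}$ diagonally with $m_k\in L_k$, $m_1<m_2<\cdots$, $m_k\geq p_k$, and $m_k$ of index at least $k$ inside $L_k$ (possible since each $L_k$ is infinite); note that for $k\geq n$ then $m_k\in L_n$ and, because $L_k\subseteq L_n$, the index of $m_k$ inside $L_n$ is still at least $k$. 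Now take $F=(k_1<\cdots<k_r)\in\mathcal{G}_\xi$ and $n\leq\min F$ with $F\in\mathcal{G}_{\eta_n}$. Writing $L_n=(l^{(n)}_j)_j$, we have $(m_k)_{k\in F}=(l^{(n)}_j)_{j\in F^{\ast}}$ where $F^{\ast}$ collects the $L_n$-indices of $m_{k_1}<\cdots<m_{k_r}$; by the index estimate $F^{\ast}$ is a spread of $F$, so $F^{\ast}\in\mathcal{G}_{\eta_n}$ and hence $(m_k)_{k\in F}\in\mathcal{G}_{\eta_n}^{L_n}\subseteq\mathcal{S}_{\eta_n}$. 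Finally $\min(m_k)_{k\in F}=m_{k_1}\geq m_n\geq p_n$, so $(m_k)_{k\in F}\in\mathcal{S}_\xi$, proving $\mathcal{G}_\xi^M\subseteq\mathcal{S}_\xi$. The main obstacle is precisely this limit step: one must arrange a single diagonal set $M$ that simultaneously absorbs the two sources of slack, namely that $\mathcal{S}_{\eta_n}$ is only eventually contained in $\mathcal{S}_\xi$ and that $\mathcal{G}_{\eta_n}$ is only comparable to $\mathcal{S}_{\eta_n}$ after passing to $L_n$; the index and spreading bookkeeping above is what makes this possible, and everything else is formal.
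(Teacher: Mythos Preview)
Your reduction to the \emph{comparison lemma} and the way you deduce both bullets from it are exactly what the paper does; the paper simply cites the comparison lemma as \cite[Theorem 2.2.6]{ArgyrosMercourakusTsarpalias} instead of reproving it. So the overall architecture is the same, and your deductions of the two bullets are correct and match the paper's line by line.

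The new content in your proposal is the direct transfinite-induction proof of the comparison lemma. The base and successor cases are fine. In the limit step there is one gap: the sentence ``Using the inheritance remark we may assume $L_1\supseteq L_2\supseteq\cdots$'' is not justified by the inheritance remark you stated. That remark only says that if $L'\in[L]$ is a \emph{subset} of a witness then $L'$ is again a witness; but starting from arbitrary witnesses $L_1,L_2,\dots$ there is no reason $L_1\cap L_2$ is infinite, so you cannot pass to nested subsets this way. The easy fix is to use the spreading property of the \emph{target} families $\mathcal{S}_{\eta_n}$ rather than of $\mathcal{G}_{\eta_n}$: if $\mathcal{G}_{\eta_n}^{L_n}\subseteq\mathcal{S}_{\eta_n}$ and $N=(n_k)$ is any increasing sequence with $n_k\geq l^{(n)}_k$ for all $k$, then $(n_k)_{k\in F}$ is a spread of $(l^{(n)}_k)_{k\in F}\in\mathcal{S}_{\eta_n}$, hence lies in $\mathcal{S}_{\eta_n}$. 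With this stronger inheritance you can recursively choose nested $L_1\supseteq L_2\supseteq\cdots$ (or, equivalently, dispense with nestedness altogether and just pick $m_k\geq\max\{l^{(j)}_k:j\leq k\}$ and $m_k\geq p_k$, which makes the $F^\ast$ bookkeeping unnecessary). Alternatively, strengthen the induction hypothesis to ``for every $P\in[\mathbb N]$ there is $M\in[P]$ with $\mathcal{G}_\xi^M\subseteq\mathcal{S}_\xi$'', which also carries through and makes the recursive nesting immediate. Once this is repaired, your limit step goes through as written.
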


\begin{proof}
    It follows from \cite[Theorem 2.2.6.]{ArgyrosMercourakusTsarpalias} that there is $M = (m_k)_{k \in \mathbb{N}} \in [\mathbb{N}]$ such that $\mathcal{G}_\xi^M \subseteq \mathcal{S}_\xi$. For the first part, we want to show that
    \begin{align*}
        \norm{\sum_{k \in F} a_k x_{m_k}} \geq c \sum_{k \in F} |a_k| \hspace{1cm} \text{for all } F \in \mathcal{G}_\xi \text{ and } (a_k)_{k \in F} \in \mathbb{R}^F.
    \end{align*}
    Fix such $F$ and $(a_k)_{k \in F}$ and define $b_j = a_k$ if $j = m_k$ for some $k \in F$ and $b_j = 0$ otherwise. Then $F' = \{m_k: \; k \in F\} \in \mathcal{G}_\xi^M \subseteq \mathcal{S}_\xi$ and
    \begin{align*}
        \sum_{k \in F} a_k x_{m_k} = \sum_{k \in F} b_{m_k} x_{m_k} = \sum_{j \in F'} b_j x_j.
    \end{align*}
    Hence,
    \begin{align*}
        \norm{\sum_{k \in F} a_k x_{m_k}} = \norm{\sum_{j \in F'} b_j x_j} \geq c \sum_{j \in F'} |b_j| = c \sum_{k \in F} |a_k|.
    \end{align*}
    
    The second part is easier -- if there is $N \in [\mathbb{N}]$ such that $\mathcal{S}_\xi^N \subseteq \mathcal{F}_c((x_n)_{n \in \mathbb{N}})$, then $\mathcal{S}_\xi \subseteq \mathcal{F}_c((x_n)_{n \in N})$. Hence, $\mathcal{G}_\xi^M \subseteq \mathcal{S}_\xi \subseteq \mathcal{F}_c((x_n)_{n \in N})$ and $(x_n)_{n \in N}$ is $(\xi,c)$-large with respect to $\mathcal{G}_\xi$.
\end{proof}

It easily follows from the previous lemma that the quantities $\operatorname{sm}_\xi$ and $\operatorname{wus}_\xi$ do not depend on the choice of successor ordinals made in definition of the Schreier hierarchy. We do not know if the quantities $\operatorname{wbs}_\xi$ and $\operatorname{wbs}_\xi^s$ depend on this choice, however, by Theorem \ref{TheoremQuantBS}, they are equivalent to the quantity $\operatorname{sm}_{\xi+1}$, which is independent on this choice. Hence, the notions of weak $\xi$-Banach-Saks sets are also not dependent on this choice.

As we already mentioned in Section \ref{section:Examples}, the inequalities of Theorem \ref{TheoremBSaWBS} are optimal and, possibly except for the inequality $\operatorname{bs}_\xi(A) \leq \operatorname{bs}_\xi^s(A)$, can be strict. We have also shown that the inequalities of Theorem \ref{TheoremQuantBS}. are optimal and the inequalities concerning the quantity $\operatorname{wus}_{\xi+1}$ can be strict. What remains open is the following question:

\begin{question} \label{QuestionStrict}
    Let $A$ be a bounded set in a Banach space $X$ and $\xi < \omega_1$. It is necessarily true that $\operatorname{wbs}_\xi(A) = \operatorname{wbs}_\xi^s(A) =  2\operatorname{sm}_{\xi+1}(A)$?
\end{question}

It follows from Theorem \ref{TheoremQuantBS} that the quantities $\operatorname{wbs}_\xi$ and $\operatorname{wbs}_\xi^s$ are equivalent. The same approach, however, cannot be used for the quantities $\operatorname{bs}_\xi$ and $\operatorname{bs}_\xi^s$.

\begin{question}
    Are the quantities $\operatorname{bs}_\xi$ and $\operatorname{bs}_\xi^s$ equal? Or, at least, equivalent?
\end{question}

In \cite[Section 5]{Bendov__2015} the authors proved a dichotomy concerning the quantities applied to a unit ball. More precisely, they showed, in our notation, that for a Banach space $X$ we have $\operatorname{wbs}_0(B_X) \in \{0,2\}$. We did not manage to use this approach to the quantities of higher orders, so the following question still remains open:

\begin{question} \label{QuestionDichotomy}
    Let $X$ be a Banach space and $\xi<\omega_1$. Is it necessarily true that $\operatorname{wbs}_\xi(B_X) \in \{0,2\}$?
\end{question}

It is known that a normalised basic sequence $(x_n)_{n \in \mathbb{N}}$ in a Banach space $X$ has a subsequence generating a spreading model, say $\mathcal{X}$ (see e.g. \cite[Theorem 11.3.7.]{nigel2006topics}). It is readily proved that if moreover $(x_n)_{n \in \mathbb{N}}$ generates an $\ell_1$-spreading model, then $\mathcal{X}$ is isomorphic to $\ell_1$. This in combination with a variation of the James' $\ell_1$ distorsion theorem \cite[Theorem 10.3.1.]{nigel2006topics} was used in \cite{Bendov__2015} to prove the dichotomy for $\xi = 0$. It could help to solve Question \ref{QuestionDichotomy} if we could say something more about the relation of $(x_n)_{n \in \mathbb{N}}$ and $\mathcal{X}$ if we knew that $(x_n)_{n \in \mathbb{N}}$ generates an $\ell_1^\xi$-spreading model for some $1 < \xi < \omega_1$.

\bibliographystyle{acm}
\bibliography{bibliography}

\end{document}